\documentclass[11pt]{article}
\usepackage{amsthm,amsfonts,amssymb,amsmath,mathrsfs}
\usepackage{enumerate}
\usepackage[colorlinks=true]{hyperref}
\usepackage[papersize={8.2in,11in},textwidth=16cm,textheight=22cm,centering]{geometry}
\date{}
\newcommand{\keywords}[1]{\par\smallskip\noindent\textbf{Keywords and phrases. }#1\par\smallskip}
\newcommand{\subjclass}[1]{\par\smallskip\noindent\textbf{2020 Mathematics Subject Classification. }#1\par\smallskip}
\newtheorem{Theorem}{Theorem}[section]
\newtheorem{Lemma}[Theorem]{Lemma}
\newtheorem{Corollary}[Theorem]{Corollary}
\newtheorem{Proposition}[Theorem]{Proposition}
\newtheorem{Remark}[Theorem]{Remark}
\newtheorem{Example}[Theorem]{Example}
\newtheorem{Definition}[Theorem]{Definition}

\def\deg{\operatorname{deg}}
\def\Deg{\operatorname{Deg}}

\def\codim{\operatorname{codim}}
\def\reg{\mathrm{reg}} 

\def\ri{\operatorname{ri}}
\def\rem{\operatorname{rem}}

\def\gcd{\mathrm{gcd}}

\def\mfr{{\frak m}}

\def\ebf{{\mathbf e}}

\def\ubf{{\mathbf u}}

\def\wbf{{\mathbf w}}
\def\Wbf{{\mathbf W}}

\def\mfr{{\mathfrak m}}

\def\Lcal{{\mathcal L}}

\def\Acal{{\mathcal A}}

\def\Pset{{\mathbb P}}
\def\Zset{{\mathbb Z}}

\def\Nset{{\mathbb N}}
\def\Iset{{\mathbb I}}

\theoremstyle{remark}

\theoremstyle{definition}

\begin{document}

\title{\textsc{New bounds on Castelnuovo--Mumford regularity of monomial curves and application to sumsets}}
\author{Le Tuan Hoa and Doan Quang Tien}

\maketitle
\begin{abstract}  
A monomial curve $C$ is defined by a sequence of coprime integers $0 = a_0 < a_1 < \cdots < a_k =: d$. One gap of this sequence is $a_{i+1} - a_i  - 1$. Gruson--Lazarsfeld--Peskine bound (1983) says that $\reg(C) \le d  -  k +2$, which is equal to the sum of all gaps plus 2. L'vovsky (1996) showed that it is enough  to take the sum of two largest gaps plus 2. In this paper, under some specific conditions, we give several new bounds which are better than L'vovsky’s bound.  Our method relies on the study of Ap\'ery sets  and Frobenius numbers. From this we can give new criteria to check the (arithmetically) Cohen--Macaulay and Buchsbaum property of $C$. Algorithms are provided to check these properties as well as to compute $\reg(C)$ and other invariants. We also give an application to study the structure of sumsets.
\keywords{Castelnuovo--Mumford regularity, Monomial curve, Frobenius number, sumset.}
\subjclass{Primary 13D45. Secondary 13H10, 14M25, 11B13.}
\end{abstract}

\section{Introduction}\label{Intr}
A projective monomial curve $C := C(a_1,\ldots,a_k) \subset \Pset_K^k$ is defined by  a sequence of coprime integers $0 < a_1 < \cdots < a_k =: d$. From the general result of \cite{GLP} on the Castelnuovo-Mumford regularity of curves, we already know that $\reg(C) \leq d-k+2$. Because of the importance of the Castelnuovo--Mumford regularity,  it is still of great interest to find  better bounds for  $\reg(C)$. On one side, $C$ is so special, that one can hope to get a better bound.  A gap of the above sequence is $a_{i+1} -  a_i  - 1$ for $i=0,1,\ldots, k-1$, where $a_0:= 0$. Then $d-k$ is the sum of all gaps.  L'vovsky \cite{Lv} in 1996 proved that it is enough to take the largest and second largest gaps  $\lambda_{\max}$ and $\lambda_{\mathrm{sl}}$: $\reg(C) \leq \lambda_{\max} + \lambda_{\mathrm{sl}} + 2$. This is a rather good bound. However, under additional assumptions, one can provide much better bounds, or even compute it, see,  e.g., \cite{BCFH, HeHS, LT}. Our first goal is to provide further bounds for some rather broader classes of monomial curves.

On the other hand, the coordinate ring of $C(a_1,\ldots,a_k)$  is isomorphic  to the semigroup ring $K[S]$, where $S\subseteq \Nset^2$ is generated by elements $(a_i, d- a_i)$, $i=0,1,\ldots,k$ ($a_0 := 0$) -- a combinatorial subject. It is therefore natural to ask if one can give a combinatorial proof for the above mentioned results?  A first combinatorial proof of Gruson--Lazarsfeld--Peskine bound for $C$ was provided only in 2014, see  \cite{Nit}. Recently, it was shown in \cite[Section 5]{GS} that this bound can be deduced from a result \cite[Theorem 2]{GW} on the so-called sumsets structure; thus we have a second combinatorial proof. For the L'vovsky's bound, the question remains open.
 
 Our method is purely combinatorial. It is based on the study of Ap\'ery sets and related invariants such as Frobenius numbers and degrees of elements in a numerical semigroup. So it provides a bridge between this problem and some classical aspects in Number Theory. Moreover, our approach leads to new characterization of the (arithmetically) Cohen--Macaulay and Buchsbaum property of $C$. This is the second aim of this paper. Note that this problem is a particular case of a more general problem of determining the Cohen-Macaulay property of the so-called simplicial affine semigroup rings. The latter problem was studied earlier by many authors, see e.g., \cite{GSW, St, TH}. The curve case was studied in \cite{ Br, T, TH, HeS}. 
 
 Using the Ap\'ery sets and degrees of its elements, we can describe the first local cohomology module $H^1_{\mfr}(K[S])$ explicitly, at least from the computational point of view, see Lemma \ref{A3} and Theorem \ref{A4a}. From that we can give   criteria for the Cohen--Macaulay and Buchsbaum property of $C$, see Theorem \ref{A3New}. Using these criteria, we can classify some new classes of monomial curves, see Proposition \ref{A3Add}, Corollary \ref{A3b} and Corollary \ref{BbmReg2}. In \cite{HeS}, the Cohen--Macaulay property of $C$ is studied via Gr\"obner bases. 
 
 Back to the Castelnuovo--Mumford regularity, this number is defined by two invariants: $a_1(K[S])$ and $a_2(K[S])$ (see the definition at the beginning of Section \ref{Breg}). Using Apery sets one can give a formula to compute $a_2(K[S])$ (see Lemma \ref{B1}) or to bound it in terms of Frobenius numbers (see Lemma \ref{B2}), or in terms of $a_1, a_{k-1}, d$ (see Lemma \ref{FrobSum}), which is smaller than L'vovsky's bound. The main task in bounding $\reg(C)$ is to bound $a_1(K[S])$.  Thank to the structure Theorem \ref{A4a} of $H^1_{\mfr}(K[S])$ we can give  lower and upper bounds on  $a_1(K[S])$, see Theorem \ref{A4}. Unfortunately, the obtained bounds are not explicit in terms of $a_1,\ldots,a_k$, and therefore we cannot reprove L'vovsky's bound. However, using this theorem, under some additional assumptions, we can provide  explicit bounds on $\reg(C)$, which are better than L'vovsky's bound, see Proposition \ref{Bd2} and Theorem \ref{A9}. It is interesting to mention that Theorem \ref{A9} is based on an improvement of a rather old bound on the Frobenius number given in \cite{Se}.

The structure Theorem \ref{A4a} together with  Lemma \ref{B1} allow us to  provide  algorithms to check the Cohen--Macaulay or Buchsbaum property of $C$,  to compute $\reg(C)$  and various other invariants, see the Appendix. Note that $a_1([K[S]), \ a_2([K[S]),\  \reg(K[S])$ can be  computed, using a special kind of initial ideals, see \cite[Example 4.3]{BGi} and the comment before it. Our algorithms do not require computation of Gr\"obner bases, and thus can run quickly.

The study of Castelnuovo--Mumford regularity has nice application to the sumsets structure theory. This is  a classical, but still hot topic in Additive Number Theory. Interested readers can consult recent papers \cite{El, GS} for some relationships between the two fields. In this paper we provide a new application, which states  that L'vovsky's bound also works for the so-called sumsets regularity $\sigma(A)$ (see Definition \ref{Sumreg} and Proposition \ref{SumsetLv}). It is worth to mention that the best bound on $\sigma(A)$ until this result is the Gruson--Lazarsfeld--Peskine bound  proved in \cite{GW} not long ago.

We now briefly describe the content of this paper. In Section \ref{Apery} we recall some basic facts and results on Ap\'ery sets and  Frobenius numbers. There we also provide a new bound on the Frobenius number (Lemma \ref{Selmer1}). In Section \ref{Locoh} we show how one can use Ap\'ery sets to describe the local cohomology modules of $K[S]$.  Using  Lemma \ref{A3} we can provide criteria for $K[S]$ to be a Cohen--Macaulay or Buchsbaum ring (Theorem \ref{A3New}), and give some explicit classes in terms of $a_1,\ldots,a_k$, see Proposition \ref{A3Add} and Corollary \ref{A3b}. In Section \ref{Breg}, we prove a fundamental result (Theorem \ref{A4a}) on the first local cohomology module $H^1_{\mfr}(K[S])$. Then we provide several bounds on $a_1(K[S])$ and $\reg(K[S])$ (see Theorem \ref{A4}, Proposition \ref{Bd2} and Theorem \ref{A9}). In the last Section \ref{Appl} we provide some examples and give the proof of Lvovsky's bound on the sumsets regularity of a numerical finite subset (Proposition \ref{SumsetLv}). The algorithms are given in the Appendix.

\vskip0.5cm
\noindent {\bf Acknowledgement}.  It is our pleasure to devote this paper to the memory of Professor J\"urgen Herzog. His works have inspired and still inspire a lot of researchers. In particular, his paper  \cite{He} made a great impact on  the study of (projective) monomial curves.

The first author of this work is partially supported by  the Program for Research Activities of Senior Researchers of VAST under the grant number  NVCC. 

\section{Frobenius number and Ap\'ery set} \label{Apery}

In this section we recall the notions of Frobenius number and Ap\'ery set. We also provide a new bound on Frobenius number which will be used in Section \ref{Breg} to bound the Castelnuovo--Mumford regularity.

We denote the set of non-negative integers and the set of integers by $\Nset$ and $\Zset$, respectively. If $a$ is a real number, we denote by $\lfloor a \rfloor$ the largest integer not exceeding $a$ and by $\lceil a \rceil$ the smallest integer $n$ such that $n\geq a$. The notation $[p,q]$ means the set $ \{n\in \Zset\mid p\leq n\leq q\}$, where $p\leq q$ are integers. If $A$ is a subset of an additive  semigroup $S$ and  $n$ is a positive integer, we denote by $nA$ the sum $A+\cdots + A \subseteq S$ ($n$ times).

Let $\Acal = \{ \alpha_1,  \ldots , \alpha_k \} $  be a set of integers $0<  \alpha_1 < \cdots < \alpha_k$ with $\gcd(\alpha_1,\ldots,\alpha_k) = 1$, where $k\ge 2$. In this paper we always assume that all coefficients $x_i$ in a sum $\sum_i x_i\alpha_i$ are non-negative integers.
We denote by $\langle \Acal \rangle := \{\sum_{i=1}^k x_i\alpha_i \mid x_i\in \Nset\}$ the numerical semigroup generated by $\Acal$. For each $0 \le i \le \alpha_k -1$, let
$$\omega_{\Acal}(i) = \min\{ \alpha \in \langle \Acal \rangle \mid  \alpha \equiv i \bmod \alpha_k \}.$$
For convenience, we also set $ \omega_{\Acal}(\alpha_k)   := 0$. The set 
$$\mathrm{Ap}_{\Acal} := \{ \omega_{\Acal}(0), \omega_{\Acal}(1),\ldots, \omega_{\Acal}(\alpha_k -1) \}$$  is called  {\it Ap\'ery set} of $\Acal$.
The greatest integer number not belonging to $\langle \Acal \rangle $ is called  {\it Frobenius number} and is denoted by $F( \alpha_1, \ldots ,\alpha_k)$ or $F(\Acal)$. Note that $F(\Acal) + 1$ is equal to the conductor of  the semigroup $\langle \Acal \rangle$. It is clear that
$$F(\Acal) = F( \alpha_1, \ldots ,\alpha_k) = \max\{\omega_{\Acal}(1),\ldots, \omega_{\Acal}(\alpha_k -1) \} - \alpha_k .$$
 More generally, let $\alpha\in \Acal$. If $t_0 = 0,\ldots,t_{\alpha -1} \in \langle \Acal \rangle$ such that $t_i \equiv i \bmod \alpha$, then 
 \begin{equation}\label{EAp0}
 F(\Acal) \le \max\{t_0, t_1,\ldots,t_{\alpha -1}\} - \alpha.
 \end{equation}
We often use the following formula
\begin{equation}\label{EAp1}
\omega_{\Acal}(i) - \alpha_k \le F( \alpha_1, \ldots ,\alpha_k).
\end{equation}
Note that for a fixed $i$ one can use Integer Programming to compute $\omega_{\Acal}(i)$: $\omega_{\Acal}(i) = i + n  \alpha_k$, where $n$ is the value of the following problem: 
$$\begin{array}{cc}
& n = \min  x_k       \\
{\mathrm{subject\  to} }     &  \begin{cases}
x_1\alpha_1 + \cdots + x_{k-1}\alpha_{k-1} = i + x_k \alpha_k,\\
x_1,\ldots,x_k  \in \Nset.
\end{cases}
\end{array}$$
However, the computation of Frobenius numbers is in general a NP-hard problem. Even in the case $k=3$, various polynomial time algorithms to compute $F(\Acal)$ are known but none lead to an explicit formula. The interested  readers can consult the book \cite{Ra} for more information and references. In the Appendix we give Algorithm 1 to compute the Ap\'ery set, from which we can also compute $F(\Acal)$.

For our application, we  need good upper bounds on $F(\Acal)$. A  simple bound is known as Schur's bound proved in \cite{Bra}
\begin{equation}\label{EFRb1}
F(\alpha_1,\ldots,\alpha_k) \le (\alpha_1-1)(\alpha_k - 1) - 1.
\end{equation}
The equality holds if $k=2$. We now recall  a bound obtained by Selmer. The numbers $\alpha_1,\ldots,\alpha_k$ are said to be {\it independent} if no $\alpha_i,\ 2\le i\le k$, can be written as  a sum $\sum_{j\le i-1} x_j\alpha_j$. Analyzing the proof of \cite[Theorem 1]{EG}, Selmer observed that if $\alpha_1,\ldots,\alpha_k$ are independent, then for each $i\le \alpha_1 - 1$, there is $t_i \in 
\langle \Acal \rangle$ such that $t_i \equiv i \bmod \alpha_1$, $t_i = \sum_{j\le k} x_j\alpha_j$ and $\sum_{j\le k} x_j \le 2\lfloor \alpha_1/k \rfloor$.  From this he got
$$F(\alpha_1,\ldots,\alpha_k) \le 2 \alpha_k \left\lfloor \frac{\alpha_1}{k } \right\rfloor - \alpha_1,$$
provided that $\alpha_1,\ldots,\alpha_k$ are  independent. In fact, this bound still holds under a weaker assumption as follows.

\begin{Lemma} \label{Selmer1} Assume that $\alpha_i \not\equiv \alpha_j \bmod \alpha_1$ for all $i\neq j$. Then
$$F(\alpha_1,\ldots,\alpha_k) \le 2 \alpha_k \left\lfloor \frac{\alpha_1}{k } \right\rfloor - \alpha_1.$$
\end{Lemma}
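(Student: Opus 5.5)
We will reproduce the Erdős–Graham/Selmer argument and check that it only uses the assumption that the residues of $\alpha_1,\ldots,\alpha_k$ modulo $\alpha_1$ are pairwise distinct. Write $n:=\alpha_1$. By (\ref{EAp0}) with $\alpha=\alpha_1$, it suffices to find, for every residue $i \bmod n$, an element $t_i=\sum_j x_j\alpha_j\in\langle\Acal\rangle$ with $t_i\equiv i \bmod n$ and $\sum_j x_j\le 2\lfloor n/k\rfloor$. Indeed, such a $t_i$ satisfies $t_i\le \alpha_k\sum_j x_j\le 2\alpha_k\lfloor n/k\rfloor$, and (\ref{EAp0}) then gives $F(\Acal)\le 2\alpha_k\lfloor n/k\rfloor-\alpha_1$. (For $i=0$ we take $t_0=0$.)

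The key step is to show that the sets of residues reachable with few summands grow quickly. Let $B=\{\bar\alpha_1=0,\bar\alpha_2,\ldots,\bar\alpha_k\}\subseteq\Zset/n\Zset$ be the set of residues of the generators. By hypothesis $|B|=k$; this is the only place where the assumption is used, and it replaces independence. Since $0\in B$, the sets $hB$ (residues of sums of exactly $h$ generators, equivalently of at most $h$ nonzero ones) are increasing in $h$. We then invoke a Cauchy–Davenport/Kneser type growth estimate in the cyclic group $\Zset/n\Zset$: if $hB\neq\Zset/n\Zset$, we want to show that $|(h+1)B|$ exceeds $|hB|$ by an amount large enough that $hB=\Zset/n\Zset$ once $h=2\lfloor n/k\rfloor$. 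In the Erdős–Graham proof this is obtained by a counting argument. Consider $h=\lfloor n/k\rfloor$ and the set $hB$. If $|hB|>n/2$, then $hB+hB=\Zset/n\Zset$ by pigeonhole, since $x-hB$ and $hB$ must intersect. So the main task is to prove $|hB|>n/2$ (or some variant that makes $2h$ suffice). Kneser's theorem gives $|hB|\ge h|B|-(h-1)|H|$ with $H$ the stabilizer, and when $H$ is trivial this yields $|hB|\ge h(k-1)+1$. Note that $h(k-1)+1$ is not obviously $>n/2$, so the Erdős–Graham argument must really be used more carefully.

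The main obstacle is therefore this growth step, including the case of a nontrivial stabilizer $H$ (a proper subgroup). Since $\gcd(\alpha_1,\ldots,\alpha_k)=1$, $B$ generates $\Zset/n\Zset$, so $B$ is not contained in $H$; if the growth stalls we pass to the quotient by $H$ and iterate. Rather than re-derive this, the first thing to try is to redo Selmer's reading of \cite[Theorem 1]{EG} verbatim, only with residues. In that proof, independence is used solely to guarantee that the $k$ generators give $k$ distinct residues modulo $\alpha_1$ (otherwise a generator could be replaced by a smaller one with the same residue, and this would not help). Our hypothesis supplies exactly this, so the same counting bound $\sum x_j\le 2\lfloor \alpha_1/k\rfloor$ holds. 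We would then carefully verify each step of that proof for this replacement, which is where any gap would appear.
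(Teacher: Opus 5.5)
Your proposal is correct and is essentially the paper's proof. You reduce via (\ref{EAp0}) to writing every residue modulo $\alpha_1$ with at most $2\lfloor \alpha_1/k\rfloor$ generators. You then import from the Erd\H{o}s--Graham argument the fact that $m\overline{\Acal}+m\overline{\Acal}=\Zset/\alpha_1\Zset$ for $m=\lfloor\alpha_1/k\rfloor$, which needs only $\sharp\overline{\Acal}=k$.

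Incidentally, the Kneser route you set aside already closes the argument on its own. Suppose $m\overline{\Acal}$ is proper with stabilizer $H$, $|H|=s$, $N=\alpha_1/s$, and $j\ge 2$ cosets of $H$ meet $\overline{\Acal}$ (since $\overline{\Acal}$ generates). Then $m\ge q:=\lfloor N/j\rfloor$, and Kneser gives $|m\overline{\Acal}|\ge s\bigl(q(j-1)+1\bigr)>\alpha_1/2$, because $N=qj+r$ with $r<j\le q(j-2)+2$. Pigeonhole then yields $2m\overline{\Acal}=\Zset/\alpha_1\Zset$ with no iteration through quotients, and contrary to your remark, $h(k-1)+1>n/2$ always holds.
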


\begin{proof} For the convenience of the reader, we give  here a sketch of proof.  Let $m:= \left\lfloor \frac{\alpha_1}{k } \right\rfloor $ and $\overline{\Acal} := \{ 0, \overline{\alpha}_2,\ldots, \overline{\alpha}_k \}$ the set of residues of $\alpha_1,\ldots,\alpha_k$ modulo $\alpha_1$. We can consider $\overline{\Acal}$ as a subset of the group $\Zset/\alpha_1\Zset$.
By the assumption, the number of elements  $\sharp \overline{\Acal} = k$. Using this notation, it is shown in the proof of \cite[Theorem 1]{EG} that $m \overline{\Acal} + m \overline{\Acal}$ contains a complete residues system modulo $\alpha_1$. Then one can choose $t_i \in (2m) \Acal 
$ such that  $t_i \equiv i \bmod \alpha_1$. Since $t_i \le 2m \alpha_k$,  and by (\ref{EAp0}),  
 $F(\Acal) \le \max\{ t_1,\ldots,t_{\alpha_1-1}\} - \alpha_1$,  the statement follows.
\end{proof}

For an example, $\left\{ 5,7,9,16 \right\}$ satisfy the assumption of the above lemma, but they are dependent. We can reformulate the above result as the following:

\begin{Lemma} \label{Selmer2} Assume that the number of residues of $\alpha_1,\ldots, \alpha_k$ modulo $ \alpha_1$ is $k'$. Then
$$F(\alpha_1,\ldots,\alpha_k) \le 2 \alpha_k \left\lfloor \frac{\alpha_1}{k'} \right\rfloor - \alpha_1.$$
\end{Lemma}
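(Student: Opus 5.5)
The plan is to reduce Lemma \ref{Selmer2} to Lemma \ref{Selmer1} by passing to a subfamily of generators with pairwise distinct residues modulo $\alpha_1$.

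\emph{Choosing the subfamily.} Among $\alpha_1,\ldots,\alpha_k$ there are exactly $k'$ distinct residues modulo $\alpha_1$. For each residue class that occurs, keep only the \emph{smallest} $\alpha_j$ in that class. Since $\alpha_1$ is the smallest generator overall, it is the representative of the class $0$, so it is always kept. Call the resulting set $\Acal' = \{\beta_1 = \alpha_1 < \beta_2 < \cdots < \beta_{k'}\}\subseteq \Acal$. By construction, $\beta_i \not\equiv \beta_j \bmod \alpha_1$ for $i\ne j$.

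\emph{Checking that $\Acal'$ is admissible.} Every discarded $\alpha_j$ satisfies $\alpha_j = \beta_i + q\alpha_1$ for some kept $\beta_i \equiv \alpha_j$ and some integer $q \ge 0$. This has two consequences.
\begin{itemize}
\item It gives $\gcd(\Acal') = \gcd(\Acal) = 1$.
\item It gives $\alpha_j \in \langle \Acal' \rangle$, hence $\langle \Acal' \rangle = \langle \Acal\rangle$ and therefore $F(\Acal) = F(\Acal')$.
\end{itemize}
One degenerate case must be handled: if $k' = 1$, then every $\alpha_j$ is a multiple of $\alpha_1$. Coprimality then forces $\alpha_1 = 1$, so $F = -1 \le 2\alpha_k - 1$ trivially. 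The edge case $k' \ge 2$ is the one where Lemma \ref{Selmer1} genuinely applies, since it requires at least two generators.

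\emph{Applying Lemma \ref{Selmer1}.} With $k' \ge 2$, Lemma \ref{Selmer1} applied to $\Acal'$ yields
$$F(\Acal) = F(\Acal') \le 2\beta_{k'}\left\lfloor \frac{\alpha_1}{k'}\right\rfloor - \alpha_1 \le 2\alpha_k\left\lfloor \frac{\alpha_1}{k'}\right\rfloor - \alpha_1,$$
where the last step uses $\beta_{k'} \le \alpha_k$.

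The only point needing care is that the first generator, $\alpha_1$, must remain the minimum of $\Acal'$, because it serves as the modulus in Lemma \ref{Selmer1}. Choosing the smallest representative in each residue class guarantees this. There is no real obstacle beyond this reduction.
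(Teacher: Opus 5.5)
Your proposal is correct. It is exactly what the paper means when it calls Lemma \ref{Selmer2} a reformulation of Lemma \ref{Selmer1}: keeping the smallest generator in each residue class leaves $\langle\Acal\rangle$ (hence $F$) and the gcd unchanged and keeps $\alpha_1$ as the modulus, so Lemma \ref{Selmer1} applies with $k'$ in place of $k$. Equivalently, one could rerun the Erd\H{o}s--Graham argument in that proof, which only uses $\sharp\overline{\Acal}=k'$; your separate handling of $k'=1$ is also fine.
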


\section{Local cohomology modules of a monomial curve} \label{Locoh}

We now see how one can use Ap\'ery sets to study the local cohomology modules of the coordinate ring of a projective monomial curve $C \subset \Pset^k_{K}$, which  is isomorphic to $K[s^d, s^{a_{k-1}}t^{d-a_{k-1}},\ldots, s^{a_1}t^{d-a_1}, t^d]$, where  $a_1< a_2 < \cdots < a_k =: d$ are  relatively prime positive integers. For convenience, we also set $a_0 = 0$. In order to explicitly indicate $C$, sometimes we also use the notation $C(a_1,\ldots,a_k)$. We only consider the case $k\ge 3$. Note that $C$ is smooth if and only if $a_1 = d-a_{k-1} = 1$.  We set  $\ebf_i : = (i,d-i)$ for all $i=0,\ldots,k$. Consider the following semigroups of $\Nset^2$:
$$\begin{array}{ll}
S &:=  \langle (a_i, d-a_i)\mid i= 0,\ldots,k\rangle = \langle \ebf_h\mid h = a_0, a_1,\ldots, a_k\rangle    \subset \Nset^2 , \\
S' & := \{ \ubf \in \Nset^2 \mid \ubf + p\ebf_0 \in S \ \text{and}\ \ubf + p\ebf_d \in S\ \text{for some }\ p \in \Nset\}.
\end{array}$$
We also use the following notations:
$$\begin{array}{ll}
A_1& := \{0, a_1,\ldots,a_k = d\}; \ S_{(1)} := \langle A_1\rangle,\\
A_2 & :=  \{0, d-a_{k-1},\ldots, d - a_1, d\}; \ S_{(2)} := \langle A_2\rangle, \\
 \mathrm{Ap}_i & := \mathrm{Ap}_{A_i\setminus \{0\}} := \{\omega_i(0),\ldots,\omega_i(d-1)\} \ (i=1,2), \\
 H & := \Zset S = \{ \ubf = (u_1,u_2) \in \Zset^2\mid u_1 + u_2 \equiv 0 \bmod d\}.
\end{array}$$
Note that $S\subseteq S' \subseteq H$ and by \cite[Lemma 2.1]{Br},  $S'= (S_{(1)} \times S_{(2)}) \cap H$. We equip rings $K[S] \subset K[H]$ with $\Zset$-grading by setting
$$\deg (\ubf) : = \frac{u_1 +u_2}{d},$$
for each $\ubf \in H$.
 Let $\mfr := K[S\setminus \left\{ 0 \right\}]$ be the maximal homogeneous ideal of $K[S]$. Then the local cohomology modules of $K[S]$ can be described as follows.

\begin{Lemma} \label{A1}
\begin{enumerate}[\((i)\)]
    \item $S' = [ H \cap (\mathrm{Ap}_1 \times \mathrm{Ap}_2)] + \Nset\ebf_0 + \Nset \ebf_d$,

As graded modules, we have
\item $H^1_{\mfr} (K[S]) \cong K[S' \setminus S]$,
\item $H^2_{\mfr} (K[S]) \cong K[H \cap( (\Zset\setminus S_{(1)}) \times (\Zset\setminus S_{(2)})) ]$.
\end{enumerate}
\end{Lemma}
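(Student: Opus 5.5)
The plan is to use the $\Nset^2$-grading of $K[S]\subset K[H]$ and the description $S' = (S_{(1)}\times S_{(2)})\cap H$ from \cite[Lemma 2.1]{Br}. Throughout, $\ebf_0=(0,d)$ and $\ebf_d=(d,0)$.

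For (i), the inclusion "$\supseteq$" is immediate. Elements of $\mathrm{Ap}_1$ lie in $S_{(1)}$ and elements of $\mathrm{Ap}_2$ lie in $S_{(2)}$. Moreover $\ebf_0$ and $\ebf_d$ lie in $S_{(1)}\times S_{(2)}$ and in $H$, and both $S_{(1)}\times S_{(2)}$ and $H$ are closed under addition.

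For "$\subseteq$", take $\ubf=(u_1,u_2)\in S'$.
\begin{itemize}
\item Since $u_1\in S_{(1)}$, we can write $u_1=\omega_1(u_1 \bmod d)+pd$ with $p\in\Nset$. Here we use that $\omega_1(i)$ is the least element of $S_{(1)}$ in its residue class mod $d$, and every element of $S_{(1)}$ in that class differs from it by a nonnegative multiple of $d$.
\item Similarly, $u_2=\omega_2(u_2\bmod d)+qd$ with $q\in\Nset$.
\item Set $\ubf'=(\omega_1(u_1\bmod d),\,\omega_2(u_2\bmod d))$. Its coordinate sum is congruent to $u_1+u_2\equiv 0 \pmod d$, so $\ubf'\in H\cap(\mathrm{Ap}_1\times\mathrm{Ap}_2)$.
\item Then $\ubf=\ubf'+p\ebf_d+q\ebf_0$, as required.
\end{itemize}
The only subtlety is the convention $\omega(d)=0$ for residue $0$, which is harmless.

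For (ii) and (iii), I would use the standard description of local cohomology of an affine semigroup ring. The ideal $\mfr$ has the same radical as $(x,y)$, where $x=t^d$ and $y=s^d$ correspond to $\ebf_0$ and $\ebf_d$. So $H^i_{\mfr}(K[S])$ is the cohomology of the Čech complex
\[
0\to K[S]\to K[S]_x\oplus K[S]_y\to K[S]_{xy}\to 0 .
\]
These localizations are semigroup rings:
\begin{itemize}
\item $K[S]_x=K[S+\Zset\ebf_0]$, and $S+\Zset\ebf_0 = H\cap(S_{(1)}\times\Zset)$. A point $(u_1,u_2)\in H$ with $u_1=\sum x_ia_i\in S_{(1)}$ equals $\sum x_i\ebf_{a_i}$ plus a multiple of $\ebf_0$.
\item Symmetrically, $K[S]_y=K[H\cap(\Zset\times S_{(2)})]$.
\item $K[S]_{xy}=K[H]$.
\end{itemize}
Everything is $H$-graded, so the complex splits degreewise and each graded piece is a complex of $0$'s and $K$'s.

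Now fix $\ubf\in H$.
\begin{itemize}
\item The $H^1$ contribution is nonzero exactly when $\ubf$ lies in both localizations but not in $S$, i.e. $\ubf\in (S_{(1)}\times S_{(2)})\cap H\setminus S = S'\setminus S$. This gives (ii).
\item The $H^2$ contribution is nonzero exactly when $\ubf$ lies in neither localization, i.e. $u_1\notin S_{(1)}$ and $u_2\notin S_{(2)}$. This gives (iii).
\end{itemize}
The $\Zset$-grading $\deg\ubf=(u_1+u_2)/d$ is compatible throughout, so these are isomorphisms of graded modules.

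The main step to check carefully is the identification of the localizations with $H\cap(S_{(1)}\times\Zset)$ and $H\cap(\Zset\times S_{(2)})$. The rest is degreewise bookkeeping in the Čech complex.
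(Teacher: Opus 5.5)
Your proof is correct, but it takes a different route from the paper. The paper's proof consists only of citations: (i) is \cite[Lemma 2.4]{FH}, (ii) is \cite[Lemma 2.6]{Br} (a special case of \cite[Corollary 3.4]{TH}), and (iii) is \cite[Lemma 2.6(i)]{HeHS}.

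You instead derive everything from scratch:
\begin{itemize}
\item For (i), you reduce each coordinate of $\ubf\in S'=(S_{(1)}\times S_{(2)})\cap H$ to its Ap\'ery representative modulo $d$.
\item For (ii) and (iii), you work simultaneously from the two-term \v{C}ech complex on $t^d,s^d$. This is legitimate because $(s^{a_i}t^{d-a_i})^d\in(s^d,t^d)$, so $\sqrt{(s^d,t^d)}=\mfr$. You identify $K[S]_x$, $K[S]_y$, $K[S]_{xy}$ with $K[H\cap(S_{(1)}\times\Zset)]$, $K[H\cap(\Zset\times S_{(2)})]$ and $K[H]$.
\end{itemize}

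What your approach buys is transparency and self-containedness. It shows that in dimension two the whole statement is $H$-graded bookkeeping in a complex whose graded pieces are $0$ or $K$. It also makes visible that $(S+\Zset\ebf_0)\cap(S+\Zset\ebf_d)$ is exactly $S'$, so you do not even need \cite[Lemma 2.1]{Br} for that identification. The citation route is shorter and rests on results valid far more generally, for arbitrary simplicial affine semigroup rings in \cite{TH}.

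One point should be stated explicitly rather than left to ``degreewise bookkeeping.'' The kernel of $K[S]_x\oplus K[S]_y\to K[S]_{xy}$ is the diagonal copy of the $K[S]$-submodule $K[S']$. Hence $H^1_{\mfr}(K[S])\cong K[S']/K[S]$ as graded $K[S]$-modules, not merely as graded vector spaces. In this module structure, multiplication by the monomial of $\ebf_h$ sends $\ubf\mapsto\ubf+\ebf_h$, which is zero if it lands in $S$. This is the structure the paper actually uses later, for instance in Theorem \ref{A3New}(iii) when testing $\mfr H^1_{\mfr}(K[S])=0$.
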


\begin{proof}
\begin{enumerate}[\((i)\)]
    \item is \cite[Lemma 2.4]{FH}.
    \item is  \cite[Lemma 2.6]{Br}. It is also a particular  case of \cite[Corollary 3.4]{TH}.
    \item is \cite[Lemma 2.6\((i)\)]{HeHS}.
\end{enumerate}
\end{proof}
Let
\begin{equation}\label{wec}
\wbf_j = (\omega_1(j), \omega_2(d-j)) \ \ (0\le j\le d-1).
\end{equation}
With this notation, we can rewrite Lemma \ref{A1}\((i)\) as
\begin{equation}\label{Ewec2}
S' = \{ \wbf_0,\ldots, \wbf_d\} + \Nset \ebf_0 + \Nset\ebf_d
\end{equation}
From the definition of the Ap\'ery set, we can see that
$$\Zset \setminus \langle S_{(i )}\rangle = \bigsqcup_{0\le j\le d-1}
 \{\omega_i(j) - nd\mid n> 0 \}\enskip\text{(disjoint union).}$$
By  Lemma \ref{A1}\((iii)\), as graded  $K$-vector spaces, we  can write
\begin{equation}\label{Eloco2} H^2_{\mfr} (K[S]) \cong  \bigoplus_{j=0}^{d-1}\bigoplus_{n,m>0} K\cdot(\wbf_j - n\ebf_0 - m\ebf_d).
\end{equation}
It is much more difficult to handle with $H^1_{\mfr} (K[S]) $, because we do not know when does an element $\ubf \in S'$ not belong to $S$. The technique used in \cite{Br, FH} is  useful in this paper. 

Let  $\Acal =\{\alpha_1< \alpha_2<\cdots <  \alpha_k\}$ and $n\in \langle \Acal \rangle$. The degree of $n$ with respect to 
$\Acal$ is the number
$$\delta_{\Acal} (n) = \min\{ x_1+ \cdots + x_k\mid n = x_1\alpha_1 + \cdots + x_k\alpha_k,\enskip x_1,\ldots,x_k \in \Nset\}.$$
Note that $\delta_{\Acal}(0) = 0$, and the value of $\delta_{\Acal}(n)$ depends on the choice of  a (not necessary minimal)  generating set $\Acal$ of the semigroup $ \langle \Acal \rangle$. If $n\not\in \langle \Acal \rangle$, we set $\delta_{\Acal}(n) := \infty$.

\begin{Remark}\label{degree} 
\begin{enumerate}[\(1)\)]
    \item As shown in \cite{Br, FH}, it is important to calculate or estimate  $\delta_{\Acal} (\omega_{\Acal}(i)) $. It is clear that if $n\in \langle \Acal \rangle$ and $n\leq b$, then $\delta(n) \leq \lfloor b/\alpha_1\rfloor $. However, in many cases this estimation is too big.

For any number $n,m \in \Nset$, we have
$$\delta_{\Acal} (m+n)  \le \delta_{\Acal} (m) + \delta_{\Acal} (n).$$
However, it could happen that $ \delta_{\Acal} (m) \gg \delta_{\Acal} (m+n) $. In particular, one could have  $\delta_{\Acal} (n) \gg \delta_{\Acal} (n+d)$. This causes estimation of $\delta_{\Acal} (\omega_{\Acal}(i)) $ complicated even if we know a bound from $\langle \Acal\rangle$ on $\omega_{\Acal}(i)$ and the degree of that  bound.

For an example, let $\Acal =\{1, d-1, d\}$ ($d\geq 4$). Then $\delta_\Acal(2d-2) = 2$. Since $1\in \Acal$,  $\omega_{\Acal}(d-2) = d-2$, whence $\delta_{\Acal} (\omega_\Acal(d-2)) = d-2$. This number is  much larger than $\delta_\Acal(2d-2) = 2$, although $2d - 2 = \omega_{\Acal}(d-2) +d$ is a bound of $\omega_{\Acal}(d-2)$.
\item In order to calculate the degree of a given number, one can use Integer Programming: if $n\in \langle \Acal \rangle$, then 
$$\begin{array}{cc}
& \delta_{\Acal}(n) =  \min x_1 + \cdots +  x_k       \\
\text{ subject  to}      &
\begin{cases}
x_1\alpha_1 + \cdots + x_{k}\alpha_{k} = n,\\
x_1,\ldots,x_k  \in \Nset.
\end{cases}
\end{array}$$ 
However, for calculating degrees of a (larger) set of numbers, by \cite[Remark 2]{FH},  one can use the following recursive formula: let
$$\begin{array}{ll}
\delta_{\Acal}(0 ) & := 0, \\
\delta_{\Acal}(1 ) & :=  \begin{cases} 1 & \text{if} \ 1\in \Acal, \\
\infty  & \text{if} \  1\not\in \Acal, 
\end{cases}\\
\delta_{\Acal} (n)&  = 1 +  \min\{ \delta_{\Acal}(n-\alpha_i) \mid \alpha_i \le n;\  i=1,2,\ldots,k\} \ \text{for} \ n\geq 2.
\end{array}$$ 
\item When $\Acal$ is known, and no confusion is arisen, we simply write $\omega(i)$ and $\delta(n)$ instead of $\omega_\Acal(i)$ and $\delta_\Acal(n)$.
\end{enumerate}
\end{Remark}
For short, let $\delta_i(n) := \delta_{A_i \setminus \{0\}}(n)$, provided $n\in S_{(i)}$ ($i=1,2$).  If $\ubf = (u_1,u_2) \in S_{(1)} \times S_{(2)}$, for short, we set $\delta_i (\ubf) := \delta_i(u_i)$ ($i=1,2$). 

 The following easy result is a  useful tool to compute $S'\setminus S$.

\begin{Lemma} \label{A2} {\rm (\cite[Lemma 3.1]{Br}, \cite[Lemma 1]{Ka})}. Let $\ubf \in S'$. Then the following are equivalent:
\begin{enumerate}[\((i)\)]
    \item $\ubf \in S$,
    \item $\delta_1(\ubf) \leq \deg(\ubf)$,
    \item $\delta_2(\ubf) \leq \deg(\ubf)$.
\end{enumerate}
\end{Lemma}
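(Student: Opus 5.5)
The plan is to prove $(i)\Leftrightarrow(ii)$ directly; $(i)\Leftrightarrow(iii)$ then follows by the symmetry $(u_1,u_2)\mapsto(u_2,u_1)$, $a_i\mapsto d-a_{k-i}$, which exchanges $S_{(1)}$ and $S_{(2)}$ and preserves $S$, $S'$ and $\deg$. The key observation is the explicit form of an element of $S$. Every generator $\ebf_h=(h,d-h)$ has degree $1$. So $\ubf\in S$ if and only if
$$\ubf=\sum_{h\in A_1} x_h\ebf_h \quad\text{with}\quad \sum_h x_h=\deg(\ubf),$$
and the first coordinate of such a sum is $u_1=\sum_h x_h h$. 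In this sum the generator $\ebf_0$ contributes $0$ to $u_1$ and is not counted in $\delta_1$, since $\delta_1$ is the degree with respect to $A_1\setminus\{0\}$.

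$(i)\Rightarrow(ii)$: Take a representation as above. Dropping the terms with $h=0$ gives a representation of $u_1$ by elements of $A_1\setminus\{0\}$ using $\sum_{h\neq 0}x_h\le \deg(\ubf)$ summands. Hence $\delta_1(\ubf)\le\deg(\ubf)$.

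$(ii)\Rightarrow(i)$: Since $\ubf\in S'\subseteq S_{(1)}\times S_{(2)}$, we have $u_1\in S_{(1)}$ and $\delta_1(u_1)$ is finite. Choose $u_1=\sum_{h\neq0}x_h h$ with $\sum x_h=\delta_1(\ubf)\le \deg(\ubf)$, and set $x_0:=\deg(\ubf)-\delta_1(\ubf)\ge0$. Then
$$\mathbf v:=\sum_{h\in A_1}x_h\ebf_h\in S$$
has first coordinate $u_1$ and degree $\deg(\ubf)$. Its second coordinate is $d\deg(\ubf)-u_1=u_2$, so $\mathbf v=\ubf$ and $\ubf\in S$.

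The only subtle point is that the second coordinate is forced by the degree, and this is what makes a single coordinate test suffice. I note that membership in $S'$ is used only to ensure $u_1\in S_{(1)}$, so that $\delta_1$ is finite; for this the containment $S'\subseteq S_{(1)}\times S_{(2)}$ coming from $S'=(S_{(1)}\times S_{(2)})\cap H$ is enough. I expect no real obstacle here.
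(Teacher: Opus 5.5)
Your proof is correct: since every $\ebf_h$ has degree $1$, $\ubf\in S$ exactly when $u_1$ is a sum of at most $\deg(\ubf)$ elements of $A_1\setminus\{0\}$, padded with copies of $\ebf_0$. The second coordinate is then forced because $\ubf\in S'\subseteq H$, which is also what makes $\deg(\ubf)$, and hence your $x_0$, an integer; \((iii)\) follows by the symmetry you describe. The paper gives no argument of its own, only citing \cite[Lemma 3.1]{Br} and \cite[Lemma 1]{Ka}, and your proof is essentially the standard argument behind those references.
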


Some criteria for the Cohen--Macaulay property of $K[S]$ were given earlier. The most used is the one given in \cite[Theorem 5.1]{GSW} and \cite[Theorem 6.4]{St}, where a criterion was given for a  more general class of ring, also see \cite[Corollary 4.4]{TH}. For the case of monomial curves, there are specific criteria given in \cite[Lemma 3.1]{T}, \cite[Proposition 2.5]{FH} and \cite[Theorems 2.2, 3.2, 3.6]{HeS}. We will see that one can use Ap\'ery sets to check if $K[S]$ is a Cohen--Macaulay ring or not.  For that we need an auxiliary result.

\begin{Lemma} \label{A3} Let
$$\Iset := \{ 1\le i\le d-1\mid i\not\in \{a_1,\ldots,a_{k-1}\},\ \delta_1(\wbf_i ) > \deg(\wbf_i)\}.$$
Then 
$$\Iset := \{ 1\le i\le d-1\mid i\not\in \{a_1,\ldots,a_{k-1}\},\ \delta_2(\wbf_i) > \deg(\wbf_i)\},$$
and
$$  \{\wbf_i\mid i \in \Iset\} \subseteq S'\setminus S \subseteq  \{\wbf_i\mid i \in \Iset\} + \Nset\ebf_0 + \Nset \ebf_d.$$
\end{Lemma}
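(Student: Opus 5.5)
The plan has three parts: verify that each $\wbf_i$ lies in $S'$, handle the indices $i=0$ and $i\in\{a_1,\ldots,a_{k-1}\}$ separately, and then use the decomposition (\ref{Ewec2}) together with Lemma \ref{A2} for both inclusions.

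\emph{Step 1: $\wbf_i\in S'$ for every $i$.} By Lemma \ref{A1}$(i)$ we have $S' = [H\cap(\mathrm{Ap}_1\times \mathrm{Ap}_2)] + \Nset\ebf_0+\Nset\ebf_d$. Since $\omega_1(i)+\omega_2(d-i)\equiv i+(d-i)\equiv 0 \bmod d$, each $\wbf_i$ lies in $H\cap(\mathrm{Ap}_1\times\mathrm{Ap}_2)$, and hence in $S'$.

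\emph{Step 2: the two descriptions of $\Iset$ agree.} Apply Lemma \ref{A2} to $\ubf=\wbf_i\in S'$: the conditions $\delta_1(\wbf_i)\le\deg(\wbf_i)$ and $\delta_2(\wbf_i)\le \deg(\wbf_i)$ are both equivalent to $\wbf_i\in S$. Hence $\delta_1(\wbf_i)>\deg(\wbf_i)$ if and only if $\delta_2(\wbf_i)>\deg(\wbf_i)$, which gives the two equal descriptions of $\Iset$. In particular, $\Iset$ is exactly the set of $i\in[1,d-1]\setminus\{a_1,\ldots,a_{k-1}\}$ with $\wbf_i\notin S$.

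\emph{Step 3: the excluded indices always give elements of $S$.} For $i=0$ we have $\wbf_0=(0,0)\in S$. For $i=a_j$ with $1\le j\le k-1$, we have $\omega_1(a_j)=a_j$, because $a_j$ is a generator and is less than $d$, so it is the smallest element of $S_{(1)}$ in its class mod $d$. Likewise $\omega_2(d-a_j)=d-a_j$. Thus $\wbf_{a_j}=\ebf_{a_j}$, a generator of $S$. Consequently $\{i\in[0,d-1] : \wbf_i\notin S\}=\Iset$, which gives the first inclusion $\{\wbf_i\mid i\in\Iset\}\subseteq S'\setminus S$.

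\emph{Step 4: the second inclusion.} By (\ref{Ewec2}), any $\ubf\in S'\setminus S$ can be written as $\ubf=\wbf_i+p\ebf_0+q\ebf_d$ for some $i$ and some $p,q\in\Nset$. Here $\wbf_d=\wbf_0=0$ in the convention $\omega(d)=0$. If $\wbf_i\in S$, then $\ubf\in S$, because $S$ is a semigroup containing $\ebf_0$ and $\ebf_d$; this is a contradiction. Hence $\wbf_i\notin S$, and by Step 3 this forces $i\in\Iset$. Therefore $\ubf\in\{\wbf_i\mid i\in\Iset\}+\Nset\ebf_0+\Nset\ebf_d$.

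No step here is a serious obstacle. The only points needing care are the identification $\omega_1(a_j)=a_j$ and the index-$d$ convention in (\ref{Ewec2}), both of which are immediate from the definitions.
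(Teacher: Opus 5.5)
Your proposal is correct and is essentially the paper's proof. Both arguments get $\wbf_i\in S'$ from (\ref{Ewec2}) and Lemma \ref{A1}$(i)$, use Lemma \ref{A2} for the two descriptions of $\Iset$ and for the first inclusion, and use (\ref{Ewec2}) for the second inclusion. Your only addition is to check explicitly that the excluded indices $i=0,d$ and $i=a_j$ give $\wbf_i\in S$, which the paper leaves implicit.
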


\begin{proof} 
By (\ref{Ewec2}), $\wbf_i\in S'$ for all $i$. By Lemma \ref{A2}\((ii)\) $\Leftrightarrow (iii)$, we get the first equality. Using the equivalence $(i)\Leftrightarrow (ii)$ of Lemma \ref{A2}, we have 
$ \wbf_i \not\in S$ if and only if $i \in \Iset$, which implies the inclusion $ \{\wbf_i\mid i \in \Iset\} \subseteq S'\setminus S$. The last inclusion now follows from (\ref{Ewec2}).
\end{proof}

Recall that $K[S]$ is called a Buchsbaum ring if $\mfr H^1_\mfr(K[S]) = 0$. With the above lemma we can give a quite effective characterization for $K[S]$ to be  a Cohen--Macaulay or Buchsbaum ring. Based on this characterization we can construct algorithms to verify these properties, see Algorithms 2 and 3 in the Appendix. 

\begin{Theorem} \label{A3New} 
\begin{enumerate}[\((i)\)]
    \item The ring $K[S]$ is  Cohen--Macaulay if and only if $\Iset = \emptyset$.
    \item Assume that $\Iset \neq \emptyset$. 
If the condition 
\begin{equation} \label{EA3N}
\delta_1(\wbf_i) = \delta_2(\wbf_i) = \deg(\wbf_i) + 1
\end{equation}
 is satisfied for all $i\in \Iset$, then $H^1_\mfr(K[S])\cong \bigoplus_{i\in \Iset}K\wbf_i$ as $K$-vector spaces. In particular,   $\ell ( H^1_\mfr(K[S])) = \sharp(\Iset) \leq d- k$.
 \item The ring $K[S]$ is  non-Cohen--Macaulay Buchsbaum if and only if $\Iset \neq \emptyset$, the condition (\ref{EA3N}) is satisfied  for all $i \in \Iset$ and there is no pair $\{i,j\} \subseteq \Iset$ such that  $\wbf_j - \wbf_i = \ebf_h$ for some $h\in \{a_1,\ldots,a_{k-1} \}$.
\end{enumerate}
\end{Theorem}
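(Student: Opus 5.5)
The plan is to reduce everything to the description $H^1_\mfr(K[S])\cong K[S'\setminus S]$ from Lemma \ref{A1}\((ii)\), together with the membership test of Lemma \ref{A2} and the sandwich of Lemma \ref{A3}.

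\emph{Part \((i)\).} Since $K[S]$ is a two-dimensional domain, $H^0_\mfr(K[S])=0$. Hence $K[S]$ is Cohen--Macaulay if and only if $H^1_\mfr(K[S])=0$, that is, if and only if $S'=S$. By Lemma \ref{A3}, $S'\setminus S$ lies between $\{\wbf_i\mid i\in\Iset\}$ and $\{\wbf_i\mid i\in \Iset\}+\Nset\ebf_0+\Nset\ebf_d$. So $S'\setminus S=\emptyset$ if and only if $\Iset=\emptyset$.

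\emph{Part \((ii)\).} I will show that under (\ref{EA3N}) no element $\ubf=\wbf_i+p\ebf_0+q\ebf_d$ with $i\in\Iset$ and $p+q\ge 1$ lies in $S'\setminus S$. Here $\ebf_0=(0,d)$ and $\ebf_d=(d,0)$, so $\ubf=(\omega_1(i)+qd,\ \omega_2(d-i)+pd)$ and $\deg\ubf=\deg\wbf_i+p+q$. Subadditivity of $\delta$ (Remark \ref{degree}) gives
$$\delta_1(\ubf)\le \delta_1(\wbf_i)+q=\deg\wbf_i+1+q,\qquad \delta_2(\ubf)\le \deg\wbf_i+1+p.$$
If $p\ge1$, the first bound gives $\delta_1(\ubf)\le\deg\ubf$. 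If $p=0$, then $q\ge1$ and the second bound gives $\delta_2(\ubf)\le\deg\ubf$. In either case $\ubf\in S$ by Lemma \ref{A2}. Hence $S'\setminus S=\{\wbf_i\mid i\in\Iset\}$. These $\wbf_i$ are pairwise distinct because their first coordinates lie in distinct residue classes modulo $d$. Therefore the length of $H^1_\mfr(K[S])$ is $\sharp\Iset$. Finally, $\Iset\subseteq[1,d-1]\setminus\{a_1,\ldots,a_{k-1}\}$, a set with $d-k$ elements, which gives $\sharp\Iset\le d-k$.

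\emph{Part \((iii)\).} The Buchsbaum property means $\ubf+\ebf_h\in S$ for every $\ubf\in S'\setminus S$ and every generator $\ebf_h$ with $h\in\{0,a_1,\ldots,a_{k-1},d\}$.

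Suppose first that $K[S]$ is non-Cohen--Macaulay Buchsbaum. By \((i)\), $\Iset\ne\emptyset$. Take $i\in\Iset$, so $\wbf_i\in S'\setminus S$. Then $\wbf_i+\ebf_0\in S$. This element has the same first coordinate as $\wbf_i$ and degree $\deg\wbf_i+1$, so Lemma \ref{A2} gives $\delta_1(\wbf_i)\le\deg\wbf_i+1$. Symmetrically, $\wbf_i+\ebf_d\in S$ gives $\delta_2(\wbf_i)\le\deg\wbf_i+1$. Since $i\in\Iset$, both $\delta_1(\wbf_i)$ and $\delta_2(\wbf_i)$ exceed $\deg\wbf_i$ (using the two descriptions of $\Iset$ in Lemma \ref{A3}). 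This forces (\ref{EA3N}).

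So in both directions of \((iii)\) we may assume (\ref{EA3N}), and then \((ii)\) gives $S'\setminus S=\{\wbf_i\mid i\in\Iset\}$. The argument in \((ii)\) already shows $\wbf_i+\ebf_0,\ \wbf_i+\ebf_d\in S$. For $h\in\{a_1,\ldots,a_{k-1}\}$, the element $\wbf_i+\ebf_h$ lies in $S'$, so it fails to be in $S$ exactly when it equals some $\wbf_j$ with $j\in\Iset$. Hence, given (\ref{EA3N}), Buchsbaumness is equivalent to the non-existence of $i,j\in\Iset$ with $\wbf_j-\wbf_i=\ebf_h$. This proves both directions.

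The main obstacle is the degree bookkeeping in \((ii)\): choosing which of $\delta_1$ or $\delta_2$ to bound according to whether $p\ge 1$. Everything else is a direct application of Lemmas \ref{A1}, \ref{A2} and \ref{A3}.
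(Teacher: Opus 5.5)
Your proposal is correct and follows essentially the same route as the paper. Part \((i)\) goes through Lemma \ref{A1}\((ii)\) and the inclusions of Lemma \ref{A3}. In part \((ii)\) you bound every $\wbf_i+p\ebf_0+q\ebf_d$ with $p+q\ge 1$ directly via Lemma \ref{A2}, whereas the paper checks only $\wbf_i+\ebf_0,\ \wbf_i+\ebf_d\in S$ and uses closure of $S$; this difference is cosmetic. In part \((iii)\) you extract (\ref{EA3N}) from $\wbf_i+\ebf_0,\wbf_i+\ebf_d\in S$ and then reduce the remaining generators $\ebf_h$ to the condition $\wbf_i+\ebf_h=\wbf_j$, exactly as the paper does.
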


\begin{proof} 
\begin{enumerate}[\((i)\)]
    \item By Lemma \ref{A1}\((ii)\), $K[S]$ is a Cohen-Macaulay ring if and only if $S'\setminus S= \emptyset$. By the  inclusions in Lemma \ref{A3},  this is equivalent to $\Iset = \emptyset$.
    \item Under the assumption (\ref{EA3N}), by Lemma \ref{A2}, we can immediately see that $\wbf_i + \ebf_0\in S$ and $\wbf_i + \ebf_d \in S$ for all $i$. Hence, by Lemma \ref{A3}, $S' \setminus S= \{\wbf_i\mid i \in \Iset\}$. By Lemma \ref{A1}\((ii)\), this implies $H^1_\mfr(K[S])\cong \bigoplus_{i\in \Iset}K\wbf_i$ as $K$-vector spaces, whence   $\ell ( H^1_\mfr(K[S])) = \sharp(\Iset) \leq d- k$.
    \item \textit{Necessity.} Assume that $K[S]$ is a non Cohen--Macaulay Buchsbaum ring. By \((i)\), $\Iset \neq \emptyset$. Since $\mfr H^1_\mfr(K[S]) = 0$ and $ \{\wbf_i\mid i \in \Iset\} \subseteq S'\setminus S$, by Lemma \ref{A1}\((ii)\), $\wbf_i + \ebf_0 \in S$ and $\wbf_i + \ebf_d \in S$.  By the inclusions in Lemma \ref{A3} we must have
$S'\setminus S =  \{\wbf_i\mid i \in \Iset\}$. 

Fix $i \in \Iset$. By the definition of $\Iset$, $\delta_1(\wbf_i) \geq \deg(\wbf_i) + 1$. Since $\wbf_i + \ebf_0 \in S$, by Lemma \ref{A2}, $ 1+ \deg(\wbf_i) =  \deg(\wbf_i + \ebf_0) \geq \delta_1( \wbf_i + \ebf_0) = \delta_1( \wbf_i)$. Hence $\delta_1(\wbf_i) =  \deg(\wbf_i) + 1$.

Using the symmetry given in Lemma \ref{A3} in the description of $\Iset$, we also get $\delta_2(\wbf_i) = \deg(\wbf_i) + 1$.  Thus the condition (\ref{EA3N}) holds.

Finally, if there are $i\neq j$ from $\Iset$ such that $\wbf_i-\wbf_j =\ebf_h$ for some $h \in \{a_1,\ldots,a_{k-1} \}$, then $\ebf_h + \wbf_j = \wbf_i \not\in S$, whence $\mfr H^1_{\mfr}(K[S]) \neq 0$, a contradiction. So, the last condition in \((iii)\) holds.

\textit{Sufficiency.} Since $\mathbb{I}\neq\emptyset$, $K[S]$ is not Cohen--Macaulay ring by \((i)\). Under the condition (\ref{EA3N}),  as observed in \((ii)\), $\wbf_i + \ebf_0\in S$ and $\wbf_i + \ebf_d \in S$ and $S'\setminus S = \{\wbf_i\mid i \in \Iset\}$. Hence,  if $\ebf_h + \wbf_i \in S$ for all $h\in A_1$ and $i\in \Iset$, then we $\mfr H^1_{\mfr}(K[S]) = 0$. We now assume that $K[S]$ is not a Buchsbaum ring. Then there must be $i\in \Iset$ and $\ebf_h,\ h\in A_1$ such that $\wbf_i + \ebf_h \not\in S$. Since $S'\setminus S = \{\wbf_i\mid i \in \Iset\}$ and $\wbf_i + \ebf_h \in S'$,  this implies $\wbf_i + \ebf_h =\wbf_j$ for some $j\in \Iset$. In particular $i\neq j$. By the definition of Ap\'ery set, we must have $h\neq 0, d$. So, $\wbf_j - \wbf_i = \ebf_h$ for some  $h\in \{ a_1,\ldots,a_{k-1}\}$, a contradiction to the assumption. Hence $K[S]$ is  a Buchsbaum ring.
\end{enumerate}
\end{proof}

Note that there is a criterion given in \cite[Theorem 3.1]{Br}  for $K[S]$ to be a Buchsbaum ring in terms of degrees of elements of Ap\'ery set and their components. The criterion also contains (\ref{EA3N}). One can deduce it from the statement \((iii)\) of above theorem. However, we believe that \((iii)\) is easier to check.

\begin{Example} \label{A3c1} {\rm Let $C_1 = C(1,3,4,8,10)$. Then $\wbf_i = (i, 20-i)$ for $i = 5,7,9$ and $\wbf_i = (i,10-i)$ in other cases. Hence $\Iset= \{2, 6 \}$. We have  $\delta_1(\wbf_2) = \delta_2(\wbf_2)  = 2 = \deg(\wbf_2) +1$ and  $\delta_1(\wbf_6) = \delta_2(\wbf_6)  = 2 = \deg(\wbf_6) +1$. That means  (\ref{EA3N}) is satisfied. The last condition is void in this case. Hence $C_1$ is a Buchsbaum curve.

Now let $C_2 = C(1,3,4,10,12)$. Then $\wbf_i = (i, 24-i)$ for $i = 5,7,9,11$ and $\wbf_i = (i,12-i)$ in other cases. Hence 
$\Iset= \{2,6, 8, 9\}$. One can check that the condition  (\ref{EA3N}) is satisfied for $\delta_1(\omega_1(i)),\ i\in \Iset$. Since $\delta_2(\omega_2(6)) =  3 > \deg(\wbf_6) + 1=2$, the condition  (\ref{EA3N}) is not satisfied for $\delta_2(\omega_2(6))$. Therefore $C_2$ is not a Buchsbaum curve.

Finally, let $C_3= C(1,4,21,85)$. Applying Algorithm 2 in the appendix, we get  $\Iset =\{20, 41, 62, 83\}$. We have $ \wbf_{20} = (20, 320),\ \wbf_{41}= (41,384),\ \wbf_{62} = (62, 448),\ \wbf_{83} = (83,512)\}$. All these four elements satisfy the condition (\ref{EA3N}). However,  $\wbf_{41} - \wbf_{20} = (21,64) = \ebf_{21}$. So, $C_3$ is not a Buchsbaum curve. 

For more curves, see Example \ref{ExAl2}. }
\end{Example}

In order to apply  Theorem \ref{A3New}, one should calculate the set $\Iset$ and degrees of it elements. For an arbitrary setting, it is  probably impossible to have explicit criterion in terms of $a_1,\ldots,a_k$. However, in some special cases, there are such criteria. Following \cite{T, LT} we can rewrite $\{a_0,\ldots,a_k\} = \bigsqcup_{j=0}^r [b_{2j},b_{2j+1}]$, where $0=:b_0 \leq b_1 < b_2 \leq b_3 < \cdots < b_{2r}\leq b_{2r+1} := d$, and $b_{2j-1} + 2 \leq b_{2j}$ for $j=1,\ldots,r$. In this new presentation, $0 < b_1$ is equivalent to $a_1 = 1$. If $b_1 > 0$ and $b_{2r} < d$ (i.e. $C$ is a smooth curve), then $K[S]$ is Cohen--Macaulay ring if and only if $A_1 = [0,d]$. If $C$ is not smooth (or equivalently, we may assume that $a_1 > 1$), then in \cite{T}, the main results concerning the Cohen--Macaulay property of $K[S]$ are only given  for  the case $r=2$ (see Theorem 2.1 and Theorem 3.5 there). Using Theorem \ref{A3New}\((i)\) we can derive some other results for the case $r> 2$.

\begin{Proposition} \label{A3Add} Let $\epsilon = \max\{a_1, d-a_{k-1}\}$.  Assume that  $i, i+1,\ldots, i+\epsilon -1  \in A_1$ for some $0\leq i \leq d-\epsilon + 1$. Then $K[S]$ is a Cohen--Macaulay ring if and only if the following conditions are satisfied:
\begin{enumerate}[\((i)\)]
    \item For all $x \in [0,i-1]\setminus A_1$ we have $x\not\in \langle A_1\rangle$ and $x+d \in 2A_1$.
    \item For all $x \in [i+\epsilon]\setminus A_1$ we have  $d-x\not\in \langle A_2\rangle$ and $x \in 2A_1$.
\end{enumerate}
\end{Proposition}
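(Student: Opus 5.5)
The plan is to apply Theorem \ref{A3New}\((i)\): $K[S]$ is Cohen--Macaulay if and only if $\Iset=\emptyset$, that is, if and only if $\delta_1(\wbf_x)\le \deg(\wbf_x)$ for every $x\in[1,d-1]\setminus A_1$. The hypothesis lets us compute one coordinate of $\wbf_x$ outright, and then the remaining test reduces to the stated conditions.

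First I use the block $i,\ldots,i+\epsilon-1\in A_1$. Since $\epsilon\ge a_1$ and $a_1\in A_1$, adding multiples of $a_1$ to this block shows that every integer $\ge i$ lies in $S_{(1)}$. Hence $\omega_1(x)=x$ for all $x\ge i$. Dually, $d-i-\epsilon+1,\ldots,d-i\in A_2$ is a run of length $\epsilon\ge d-a_{k-1}$, the smallest nonzero element of $A_2$. So every integer $\ge d-i-\epsilon+1$ lies in $S_{(2)}$, and $\omega_2(d-x)=d-x$ for all $x\le i+\epsilon-1$. The elements $x\notin A_1$ with $1\le x\le d-1$ therefore split into two ranges, $x<i$ and $x\ge i+\epsilon$. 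Between them they use $\omega_1(x)=x$ in one range and $\omega_2(d-x)=d-x$ in the other. By the symmetry between the two descriptions of $\Iset$ in Lemma \ref{A3}, it suffices to treat $x<i$, where $\wbf_x=(\omega_1(x),d-x)$. The range $x\ge i+\epsilon$ is handled the same way using $\delta_2$, with $A_1$ and $A_2$ interchanged; this yields condition \((ii)\).

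Fix $x\in[0,i-1]\setminus A_1$ and write $\omega_1(x)=x+md$, so that $\deg(\wbf_x)=m+1$.
\begin{enumerate}[\((a)\)]
\item If $x\in S_{(1)}$, then $m=0$ and $\deg(\wbf_x)=1$, while $\delta_1(x)\ge2$ because $x\notin A_1$. So $x\in\Iset$. This gives the necessity of $x\notin S_{(1)}$.
\item If $x\notin S_{(1)}$ and $x+d\in 2A_1$, then $\omega_1(x)=x+d$, the degree is $2$, and $\delta_1=2$. So $x\notin\Iset$. This gives sufficiency.
\item For the necessity of $x+d\in 2A_1$, suppose $x\notin S_{(1)}$ but $x+d\notin 2A_1$. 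If $m=1$, then $\delta_1(x+d)\ge3>2$, so $x\in\Iset$.
\end{enumerate}

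The main obstacle is the remaining case $m\ge2$, where one must still produce a non-CM witness. There $\delta_1(\wbf_x)\le m+1$ would mean $x+md$ is a sum of at most $m+1$ elements of $A_1$. I would first try to exclude this by an exchange argument: no summand can equal $d$, by minimality of $m$, and some sub-sum should lie below $d$ in the class of $x$, contradicting $x\notin S_{(1)}$ or the minimality of $\omega_1(x)$. If that fails, I would look for another element of $S'\setminus S$. For instance, I would show that some $\wbf_y$ with $y$ in the long block plus a suitable shift lies in $S'\setminus S$, using the run in $A_2$ to control $\delta_2$ via Lemma \ref{A2}.
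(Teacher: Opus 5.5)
Your strategy is the paper's. You use the run $i,\dots,i+\epsilon-1$ in $A_1$ and its mirror in $A_2$ to pin down $\wbf_x$, then test $\delta_1(\wbf_x)\le\deg(\wbf_x)$ via Lemma \ref{A3} and Theorem \ref{A3New}\((i)\). As written, though, the proof is incomplete. You leave the case $m\ge 2$ open as ``the main obstacle'' and offer only tentative strategies: an exchange argument, or a search for some other element of $S'\setminus S$. So the necessity of $x+d\in 2A_1$ in \((i)\) is not established, nor, by your symmetry, the corresponding half of \((ii)\).

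The missing observation is that this case cannot occur, and it follows from something you already proved. Since $\epsilon\ge 1$, the hypothesis $i\le d-\epsilon+1$ gives $i\le d$. Hence $x+d\ge d\ge i$ for every $x\ge 0$, and so $x+d\in S_{(1)}$, because you showed every integer $\ge i$ lies in $S_{(1)}$. Thus $\omega_1(x)\in\{x,x+d\}$, i.e.\ $m\in\{0,1\}$. This is exactly the paper's remark that $F_1\le i-1$.

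Dually, every integer $\ge d-i-\epsilon+1$ lies in $S_{(2)}$, which is the paper's $F_2\le d-(i+\epsilon)$. Hence $\omega_2(d-x)\in\{d-x,2d-x\}$ for $x\ge i+\epsilon$. With this, your cases (a)--(c) are exhaustive, and the argument closes exactly as in the paper, which lists the four possible shapes of $\wbf_x$.

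One small point on the second range. Mirroring your argument with $\delta_2$ produces the condition $2d-x\in 2A_2$, not the stated $x\in 2A_1$. The two are equivalent by Lemma \ref{A2}, since both say $\wbf_x=(x,2d-x)\in S$. You can also skip the translation: use $\omega_1(x)=x$ in that range and test $\delta_1(x)\le 2$ directly.
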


\begin{proof} Under the assumption, by (\ref{EAp0}), we have $F_1 \leq i-1$ and $F_2 \leq d- (i+\epsilon)$. Hence, for $x\not\in A_1$,
$$\wbf_x = \begin{cases}
(x, d-x) & \text{if}\quad x\leq i-1 \ \text{and}\ x\in \langle A_1\rangle,\\
(x+d, d-x)  & \text{if}\quad x\leq i-1 \ \text{and}\ x \not\in \langle A_1\rangle,\\
(x, d-x)  & \text{if}\quad x\geq i+\epsilon \ \text{and}\ d-x\in \langle A_2\rangle,\\
(x, 2d-x)  & \text{if}\quad x\geq i+\epsilon \ \text{and}\ d-x \not\in \langle A_2\rangle.
\end{cases}$$
Assume that $K[S]$ is a Cohen--Macaulay ring.
 If $x\leq i-1$ and $ x\in \langle A_1\rangle$, then $\delta_1(\wbf_x) \geq 2$, while $\deg(\wbf_x) = 1$, so $x\in \Iset$. Similarly, if $x\geq i+\epsilon$ and $ d- x\in \langle A_2\rangle$, then $x\in \Iset$. By  Theorem \ref{A3New}\((i)\) these two cases cannot happen.
 
  In the other two cases, again by Lemma \ref{A3},   $\Iset = \emptyset$ implies that $\delta_1 (x+d) \leq 2$ and $\delta_1(x)\leq 2$, respectively. Since $x+d,\ x\not\in A_1$, we must have  $\delta_1 (x+d) = 2$ and $\delta_1(x)=  2$. In other words, \((i)\) and \((ii)\) hold.
  
  The converse holds by using Lemma \ref{A3} and Theorem \ref{A3New}\((i)\).
\end{proof}

We call $\lambda_i := a_i - a_{i-1} - 1$ the $i$-th gap of $A_1 = \{0,  a_1,\ldots,a_k\}$, where $1\le i\le k$  and $a_0 := 0$. If $\lambda_i > 0$, it is called a {\it  non-zero gap} (which is real meaning of a gap!). Below is another partial result.

\begin{Corollary} \label{A3b} Let  $\{a_0,\ldots,a_k\} = \bigsqcup_{j=0}^r [b_{2j},b_{2j+1}]$, where $r\geq 1$,  $0=:b_0 \leq b_1 < b_2 \leq b_3 < \cdots < b_{2r}\leq b_{2r+1} := d$, and $b_{2j-1} + 2 \leq b_{2j}$ for $j=1,\ldots,r$.  Assume that $a_1=1$.  If there is $1\leq j\leq r$ such that  $b_{2j+1} - b_{2j-1}\geq \lambda_k + 2$ ($\lambda_k$ is the last gap, which could be zero), then $K[S]$ is not a Cohen--Macaulay ring.
\end{Corollary}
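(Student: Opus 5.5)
The plan is to use Theorem \ref{A3New}\((i)\): I will exhibit one index $x\in\Iset$. The assumption $a_1=1$ makes the first coordinate trivial. Since $1\in A_1$, we have $S_{(1)}=\Nset$, so $\omega_1(x)=x$ for every $x\in[0,d-1]$. Hence
$$\wbf_x=(x,\omega_2(d-x)), \qquad \deg(\wbf_x)=\frac{x+\omega_2(d-x)}{d}\ge 1 .$$
Now take $x\in[1,d-1]\setminus A_1$, i.e.\ $x$ lies in a non-zero gap. Then $\delta_1(x)\ge 2$, because $x$ is not a generator. So $x\in\Iset$ as soon as $\deg(\wbf_x)=1$, i.e.\ as soon as $\omega_2(d-x)=d-x$, which means $d-x\in\langle A_2\rangle$. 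The whole proof therefore reduces to finding a gap element $x$ with $d-x\in S_{(2)}$.

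The candidate I would take is $x=b_{2j-1}+1$, with $j$ the index from the hypothesis. It is not in $A_1$ because $b_{2j-1}+2\le b_{2j}$. I would try to write $d-x$ as a sum of two elements of $A_2$, namely $(d-a)+(d-a')$ with $a,a'\in A_1$. This amounts to $a+a'=d+b_{2j-1}+1$. Letting $a$ run through the block $[b_{2j},b_{2j+1}]\subseteq A_1$, the complementary value $a'=d+b_{2j-1}+1-a$ runs through the whole interval
$$J=\bigl[\,d-(b_{2j+1}-b_{2j-1}-1),\ d-(b_{2j}-b_{2j-1}-1)\,\bigr].$$
So it suffices to show that $J$ meets $A_1$.

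For this, split according to the last block. If $b_{2r}<d$, then $d-1\in A_1$, hence $1\in A_2$ and $S_{(2)}=\Nset$, so $d-x\in S_{(2)}$ trivially. Otherwise $b_{2r}=d$, the last block is $\{d\}$, and $\lambda_k=d-b_{2r-1}-1$. The hypothesis $b_{2j+1}-b_{2j-1}\ge\lambda_k+2$ gives exactly $\min J\le d-\lambda_k-1=b_{2r-1}$. Thus the left end of $J$ reaches down to the top of the last non-trivial block. If also $\max J\ge b_{2r-1}$, then $a'=b_{2r-1}\in J\cap A_1$ and we are done.

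The main obstacle is the remaining case $\max J<b_{2r-1}$, where $J$ lies strictly below $b_{2r-1}$. There I would first try to show that $J$ still meets $A_1$. If that fails, I would choose the witness differently: take $x=b_{2j}-1$ instead, or take $x$ in the last gap $[b_{2r-1}+1,d-1]$. I would also allow representations of $d-x$ with more summands of $A_2$, using $d-1\in A_2$ together with the full blocks. The goal is always to produce some gap element $x$ with $d-x\in\langle A_2\rangle$. Once such an $x$ is found, $x\in\Iset\neq\emptyset$, and Theorem \ref{A3New}\((i)\) shows that $K[S]$ is not Cohen--Macaulay.
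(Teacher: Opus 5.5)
Your reduction is right: with $a_1=1$, any gap element $x$ with $d-x\in\langle A_2\rangle$ has $\deg(\wbf_x)=1<2\le\delta_1(x)$, so $x\in\Iset$. You also settle correctly the cases $b_{2r}<d$, and $b_{2r}=d$ with $\max J\ge b_{2r-1}$.

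The gap is the remaining case, which you leave as a list of things to try. It is a real case. Unwinding, $\max J<b_{2r-1}$ means $\lambda_i\ge\lambda_k+2$, where $\lambda_i=b_{2j}-b_{2j-1}-1$ is the gap just below the block $[b_{2j},b_{2j+1}]$. There the witness $x=b_{2j-1}+1$ can fail outright. Take $A_1=\{0,1,5,7\}$ and $j=1$. Then $b_3-b_1=4\ge\lambda_k+2=3$, but $A_2=\{0,2,6,7\}$ and $d-x=5\notin\langle A_2\rangle$. Hence $\wbf_2=(2,12)$ with $\delta_1(\wbf_2)=2=\deg(\wbf_2)$, so $2\notin\Iset$, and no representation with more summands can help. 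Your fallbacks are not carried out either. For $x=b_{2j}-1$ the degree-one mechanism already fails in this example, since $d-x=3\notin\langle A_2\rangle$. For $x$ in the last gap it always fails, since $0<d-x\le\lambda_k<d-a_{k-1}=\min(A_2\setminus\{0\})$ forces $\deg(\wbf_x)\ge 2$.

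The missing idea is to reverse the roles: fix the second summand $a'=a_{k-1}$ and let $x$ move, instead of fixing $x$ and searching for $a'$. As $a$ runs through $[b_{2j},b_{2j+1}]$, the number $x=a+a_{k-1}-d$ runs through $[b_{2j}-\lambda_k-1,\ b_{2j+1}-\lambda_k-1]$, and always $d-x=(d-a)+(d-a_{k-1})\in\langle A_2\rangle$. This interval meets the gap $[b_{2j-1}+1,\ b_{2j}-1]$ exactly because $\lambda_k\ge 0$ and $b_{2j+1}-b_{2j-1}\ge\lambda_k+2$.

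Concretely, in your open case (indeed whenever $\lambda_i>\lambda_k$), take $a=b_{2j}$, i.e.\ $x=b_{2j}+a_{k-1}-d=b_{2j}-\lambda_k-1$. This lies strictly between $b_{2j-1}$ and $b_{2j}$. In the example it gives $x=3$, with $d-x=2+2$ and $\delta_1(3)=3>1=\deg(\wbf_3)$.

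This is precisely the first case of the paper's proof. The paper's second case ($\lambda_i\le\lambda_k$) is your witness $x=b_{2j-1}+1$ paired with $a'=a_{k-1}$, which you already have.
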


\begin{proof} Assume that $b_{2j-1} = a_{i-1}$. Then  $a_i = b_{2j}$. First consider the case $\lambda_i > \lambda_k$. We have $\lambda_i = a_i - a_{i-1} - 1 > 0$. Since $a_1 =1$, $1 < i < k$. Let $x := a_i + a_{k-1} - d$. Then  $x< a_i$ and $x= a_i - 1 - \lambda_k = a_{i-1} + \lambda_i - \lambda_k > a_{i-1}$. This means that $x\not\in A_1$. Since $a_1 = 1 \in A_1$, $\omega_1(i) =i$ for all $i< d$. Hence, $\delta_1(\wbf_x) = \delta_1(x) \geq 2$. On the other hand, $ d-x = (d-a_i) + (d-a_{k-1}) \in \langle A_2 \rangle$, whence $\wbf_x = (x, d-x)$ and $\deg(\wbf_x) = 1 <   \delta_1(\wbf_x) $. By Lemma \ref{A3}, $x\in \Iset$. Hence, by Theorem \ref{A3New}\((i)\), $K[S]$ is not a Cohen--Macaulay ring.

We now consider the case $\lambda_i \leq \lambda_k$. Then $b_{2j+1} \geq b_{2j} + 1$ (otherwise $\lambda_k+2 \leq b_{2j+1} - b_{2j-1} = \lambda_i +1$, a contradiction). We have $a_{i-1} + 1 +  d= y + a_{k-1}$, where 
\begin{align*}
    y & = a_{i-1} +1 +  d- a_{k-1} \\
& = a_{i-1} + \lambda_k +2  = \lambda_k - (a_i - a_{i-1} -1) + a_i +1  = \lambda_k - \lambda_i + a_i  +1 > a_i  = b_{2j}.
\end{align*}
 By the assumption,  $b_{2j+1} \geq b_{2j-1} + \lambda_k + 2 = a_{i-1} + \lambda_k + 2 = y$, which implies  $y \in [b_{2j}, b_{2j+1}] \subset A_1$. Therefore $d- (a_{i-1} + 1) = (d-y) + (d- a_{k-1}) \in \langle A_2 \rangle$, whence $\omega_2(d-x) = d-x$ and $\wbf_x = (x, d-x)$, where $x= a_{i-1} + 1$.  Since $\delta_1(\wbf_x) = 2 > \deg(\wbf_x) = 1$, $x\in \Iset$ by Lemma \ref{A3}. Hence, by Theorem \ref{A3New}\((i)\),  $K[S]$ is not Cohen--Macaulay.
\end{proof}

\section{Bounding Castelnuovo--Mumford regularity} \label{Breg}

The purpose of this section is to give bounds on the Castelnuovo--Mumford regularity of $K[S]$. If $R$ is a $d$-dimensional standard graded $K$-algebra with the maximal irrelevant ideal $\mfr$, then 
for $ i \leq d$ we set
$$ a_i(R) := \sup\{ n\mid  H^i_{\mfr} (R)_n \neq 0 \}.$$
The {\it Castelnuovo--Mumford regularity} of  $R$ is the number
$$\reg(R) = \max \{ a_i(R) + i\mid 0\leq i \leq d\}.$$
Since $K[S]$ is a domain, the  Castelnuovo--Mumford regularity of $K[S]$ is the number
$$\reg(K[S]) = \max\{ a_1(K[S]) + 1;\enskip a_2(K[S]) + 2\}.$$
Note that the Castelnuovo--Mumford regularity of the corresponding monomial curve $C$ is the number $\reg(C) := \reg(K[S])+1$.

If $A_1 = [0,d]$, it is well-known that $K[S]$ is a so-called normal ring, hence a Cohen--Macaulay. The latter property also immediately follows from  Lemma \ref{A1}\((ii)\).  Moreover, from  Lemma \ref{A1}\((iii)\), $H^2_{\mfr} (K[S]) $ has no  non-negative grading, whence $\reg(K[S]) = 1$. Hence, {\it from now on we always assume that $k< d$}.

Using (\ref{Eloco2}) we can compute $a_2(K[S])$.

\begin{Lemma} \label{B1} 
$$a_2(K[S]) =  \max_{0 \le i \le d-1}\{\deg(\wbf_i) \} - 2 =  \max_{0 \le i \le d-1}\frac{\omega_1(i) + \omega_2(d-i)}{d} - 2.$$
\end{Lemma}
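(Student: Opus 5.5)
The plan is to read $a_2(K[S])$ directly off the decomposition (\ref{Eloco2}) of $H^2_{\mfr}(K[S])$ as a graded $K$-vector space. That decomposition writes $H^2_{\mfr}(K[S])$ as a direct sum of one-dimensional pieces $K\cdot(\wbf_j - n\ebf_0 - m\ebf_d)$ with $0\le j\le d-1$ and $n,m>0$. So the first step is simply to compute the degree of each basis element. Since $\deg$ is additive on $H$ and $\deg(\ebf_0)=\deg(\ebf_d)=1$, we get
$$\deg(\wbf_j - n\ebf_0 - m\ebf_d) = \deg(\wbf_j) - n - m .$$

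For fixed $j$, this degree is maximized at $n=m=1$, where it equals $\deg(\wbf_j)-2$. Every such element is genuinely present as a basis vector, so $H^2_{\mfr}(K[S])_{\deg(\wbf_j)-2}\neq 0$ for every $j$. Conversely, every nonzero homogeneous component sits in a degree at most $\max_j \deg(\wbf_j)-2$. Taking the supremum over $j$ (a finite maximum) gives the first equality $a_2(K[S])=\max_{0\le i\le d-1}\deg(\wbf_i)-2$.

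The second equality comes from the definition (\ref{wec}), $\wbf_i=(\omega_1(i),\omega_2(d-i))$, together with $\deg(\ubf)=(u_1+u_2)/d$. One checks that $\wbf_i\in H$, since $\omega_1(i)+\omega_2(d-i)\equiv i+(d-i)\equiv 0 \bmod d$. Hence the degree is an integer and the formula is well defined. For $i=0$ the convention $\omega_2(d)=0$ gives $\wbf_0=(0,0)$, which is consistent.

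The only point I expect to need care is the justification of (\ref{Eloco2}) itself, namely that the set $H\cap((\Zset\setminus S_{(1)})\times(\Zset\setminus S_{(2)}))$ from Lemma \ref{A1}\((iii)\) is exactly the disjoint union of the sets $\wbf_j-n\ebf_0-m\ebf_d$ with $n,m>0$. Here I would use two facts. First, $u_1\notin S_{(1)}$ means $u_1=\omega_1(j)-nd$ with $n>0$, where $j\equiv u_1 \bmod d$. Second, the condition $u_1+u_2\equiv 0$ forces $u_2\equiv d-j$, so $u_2=\omega_2(d-j)-md$ with $m>0$. Since $\ebf_0=(0,d)$ and $\ebf_d=(d,0)$, subtracting $n\ebf_d+m\ebf_0$ produces exactly these pairs; the roles of $n,m$ are symmetric, so this matches (\ref{Eloco2}). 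Once this is accepted, as stated in the paper, the lemma is a direct degree computation.
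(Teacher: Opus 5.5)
Your proposal is correct and takes essentially the paper's route: the paper gives no separate proof of Lemma~\ref{B1}, but obtains it by reading degrees off the decomposition (\ref{Eloco2}), where the summand indexed by $j$ has top degree $\deg(\wbf_j)-2$, attained at $n=m=1$. Your additional check that (\ref{Eloco2}) follows from Lemma~\ref{A1}(iii) and the Ap\'ery-set description of $\Zset\setminus S_{(i)}$, including the case $j=0$ with $\wbf_0=(0,0)$, is also sound.
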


Denote the Frobenius number of $A_i \setminus \{0\}$ by $F_i$, that is  $F_i := F(A_i \setminus \{0\})$ ($i =1,2)$. By (\ref{EAp1}), $\omega_i( j) \le F_i + d$ for all $j\le d-1$. Hence, a direct application of the above lemma gives  the following simple upper bound:

\begin{Lemma} \label{B2}  $a_2(K[S]) \le \left\lfloor \frac{F_1 + F_2}{d} \right\rfloor$.
\end{Lemma}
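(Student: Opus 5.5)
The plan is to start from the exact formula in Lemma \ref{B1} and bound each summand with (\ref{EAp1}). For every $0\le i\le d-1$ we have $\deg(\wbf_i) = (\omega_1(i)+\omega_2(d-i))/d$. Here $\omega_1(i)$ and $\omega_2(d-i)$ are elements of the Ap\'ery sets $\mathrm{Ap}_1$ and $\mathrm{Ap}_2$, taken with respect to the largest generator $d$. The index $d-i$ lies in $[1,d]$, and we use the convention $\omega_2(d)=0$ for $i=0$.

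The first step is to bound the two coordinates separately. By (\ref{EAp1}) applied to $A_1\setminus\{0\}$, we get $\omega_1(i)\le F_1+d$ for $1\le i\le d-1$, and $\omega_1(0)=0\le F_1+d$ trivially. The same argument for $A_2\setminus\{0\}$ gives $\omega_2(d-i)\le F_2+d$; here $d$ is again the largest element of $A_2\setminus\{0\}$. Adding the two bounds gives
\[
\deg(\wbf_i)\le \frac{F_1+F_2+2d}{d} = \frac{F_1+F_2}{d}+2 .
\]
Substituting this into Lemma \ref{B1} yields $a_2(K[S])\le (F_1+F_2)/d$.

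The last step is integrality. Each $\wbf_i$ lies in $H$, so $\deg(\wbf_i)$ is an integer, and hence $a_2(K[S])$ is an integer. An integer bounded above by $(F_1+F_2)/d$ is bounded above by $\lfloor (F_1+F_2)/d\rfloor$, which is the claim.

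The only point needing care, and the main potential obstacle, is the edge case $F_i=-1$. This would occur if $1\in A_i$, where the semigroup is all of $\Nset$; the argument still works there since $\omega_i(j)=j\le d-1=F_i+d$. Another possible obstacle is $i=0$, where $\wbf_0=(0,d)=\ebf_d$ has degree $1$. Both cases fit the same inequality, so no real difficulty arises.
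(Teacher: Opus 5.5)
Your proof is correct and is essentially the paper's argument: you bound each Ap\'ery coordinate by $F_i+d$ via (\ref{EAp1}), substitute into Lemma \ref{B1}, and finish by integrality of $\deg(\wbf_i)$. One harmless slip: under the convention $\omega_2(d)=0$ that you adopt yourself, $\wbf_0=(0,0)$ has degree $0$ (and $(0,d)$ would be $\ebf_0$, not $\ebf_d$), which of course still satisfies the bound.
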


For explicit bounds in terms of $a_i$, we need the following result:
\begin{Lemma} \label{FrobSum} Let $\epsilon = \max\{a_1, d-a_{k-1}\} $.
\[\left\lfloor \frac{F_1+F_2}{d} \right\rfloor \le\left\{ \begin{array}{cl}
a_1 + (d-a_{k-1}) -3,\\
\epsilon-2 &\text{if}\enskip i, i+1 \in A_1\  \text{for some}\ 0\leq i \leq d-1,\\
0&\text{if}\enskip i, i+1,\ldots, i+\epsilon -1  \in A_1 \  \text{for some}\ 0\leq i \leq d-\epsilon + 1.
\end{array} \right.\]
\end{Lemma}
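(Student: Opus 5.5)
The plan is to bound $F_1$ and $F_2$ separately, using only (\ref{EAp0}) and Schur's bound (\ref{EFRb1}), and then add. In each case the key generators are $a_1$, the smallest nonzero element of $A_1$, and $b := d - a_{k-1}$, the smallest nonzero element of $A_2$. Note that $\epsilon = \max\{a_1, b\}$ and $a_1, b \ge 1$.

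\emph{First bound.} Apply (\ref{EFRb1}) to $A_1\setminus\{0\}$ and to $A_2\setminus\{0\}$. Both sets have largest element $d$, so
$$F_1 \le (a_1-1)(d-1)-1, \qquad F_2 \le (b-1)(d-1)-1.$$
Put $e := a_1 + b$. Adding gives $F_1 + F_2 \le (e-2)(d-1) - 2 = (e-2)d - e$. Since $a_1 \le a_{k-1}$, we have $2 \le e \le d$. Hence $\lfloor (F_1+F_2)/d \rfloor \le e-3 = a_1 + (d-a_{k-1}) - 3$.

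\emph{Second bound.} Suppose $i, i+1 \in A_1$. Then $d-i-1, d-i \in A_2$ are also consecutive. Consider the sums $x i + y(i+1)$ with $x + y = a_1 - 1$. These equal $(a_1-1)i + y$ for $y \in [0, a_1-1]$. They are $a_1$ consecutive integers, so they give a complete residue system modulo $a_1$, and each is at most $(a_1-1)(i+1)$. If $i = 0$ we simply use multiples of $1 = i+1$. By (\ref{EAp0}) with $\alpha = a_1$, this gives $F_1 \le (a_1-1)(i+1) - a_1$. The same argument in $A_2$ with the generator $b$ and the pair $d-i-1, d-i$ gives $F_2 \le (b-1)(d-i) - b$. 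Bounding $a_1 - 1$ and $b - 1$ by $\epsilon - 1$, we get
$$F_1 + F_2 \le (\epsilon-1)(d+1) - a_1 - b.$$
Since $a_1 + b \ge \epsilon + 1$, the right-hand side is at most $(\epsilon-1)d - 2 < (\epsilon-1)d$. Hence the floor is at most $\epsilon - 2$.

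\emph{Third bound.} Suppose $i, \dots, i+\epsilon-1 \in A_1$. Because $a_1 \le \epsilon$, the elements $i, \dots, i+a_1-1$ of $A_1$ already form a complete residue system modulo $a_1$. So (\ref{EAp0}) gives $F_1 \le i - 1$, which is exactly the bound already used in the proof of Proposition \ref{A3Add}. Symmetrically, $d-i-\epsilon+1, \dots, d-i$ lie in $A_2$ and include $b$ consecutive integers, so $F_2 \le d - i - \epsilon$. The sum is at most $d - \epsilon - 1 < d$, hence the floor is at most $0$.

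\emph{Main obstacle.} The only delicate step is the second case. There one must pick the right number $a_1 - 1$ of summands so that the sums cover all residues modulo $a_1$. One must also check that the leftover terms $-a_1 - b$ outweigh the extra $\epsilon - 1$ coming from the factor $d+1$; this holds because $a_1 + b \ge \epsilon + 1$. The remaining work is bookkeeping of the edge cases $i = 0$ and $i+1 = d$, which is routine because $0, d \in A_1 \cap A_2$.
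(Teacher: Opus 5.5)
Your proposal is correct and follows the paper's proof essentially step by step. You use Schur's bound in the first case and (\ref{EAp0}) with a run of consecutive elements in the third; in the second case the paper applies Schur's bound to $\langle a_1,i,i+1\rangle$, while you obtain the identical estimate $F_1\le (a_1-1)(i+1)-a_1=(a_1-1)i-1$ (and its analogue for $F_2$) directly from (\ref{EAp0}) via the residues $(a_1-1)i+y$, which amounts to the same argument.
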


\begin{proof}
\begin{enumerate}[\((i)\)]
    \item Using Schur's bound (\ref{EFRb1}), we get
$$\left\lfloor \frac{F_1+F_2}{d} \right\rfloor \leq \left\lfloor \frac{(a_1-1) (d-1) + (d-a_{k-1}-1)(d-1) - 2 }{d} \right\rfloor  = a_1 + (d-a_{k-1}) - 3.$$
\item Note that $F_1 \leq F(a_1, i, i+1)$ and $F_2 \leq F(d-a_{k-1}, d- (i+1), d-i)$. So, applying Schur's bound to $F(a_1, i, i+1)$ and $ F(d-a_{k-1}, d- (i+1), d-i)$, we get
$$\left\lfloor \frac{F_1+F_2}{d} \right\rfloor \leq \left\lfloor \frac{(a_1-1)i + (d-a_{k-1} - 1)(d-i-1) - 2 }{d} \right\rfloor \leq \epsilon - 2.$$
\item Under the assumption, by (\ref{EAp0}) we see that $F_1 \leq i-1$ and $F_2 \leq d-( i+\epsilon )$. Hence, by Lemma \ref{B2} and the above discussion, we get
 $$\left\lfloor \frac{F_1+F_2}{d} \right\rfloor  \leq \left\lfloor \frac{i + d-(i+\epsilon) - 1}{d} \right\rfloor  = 0.$$
\end{enumerate}
\end{proof}
Thanks to the above lemmas, it is clear that the main task in  bounding $\reg(K[S])$  is to bound $a_1(K[S])$. 
We can below give a way to compute $a_1(K[S])$ in terms of Ap\'ery sets and degrees of certain numbers.

\begin{Theorem} \label{A4a}  Assume that $K[S]$ is a non-Cohen--Macaulay ring. Then  for each $i \in \Iset$, $\delta_2(\wbf_i) - \deg(\wbf_i) - 1\geq 0$, and for each $0\leq n_1 \le \delta_2(\wbf_i) - \deg(\wbf_i) - 1$, we have $ \delta_1(\omega_1(i) + n_1d) - \deg(\wbf_i) - n_1 -1 \geq 0$.
Let
$$\begin{array}{ll}
\Lcal :=  \{\wbf_i + (n_1d, n_2d)\mid i\in \Iset;\ & 0\leq n_1 \le \delta_2(\wbf_i) - \deg(\wbf_i) - 1,\\
& n_2 =  \delta_1(\omega_1(i) + n_1d) - \deg(\wbf_i) - n_1 -1\},\\
\Lcal' :=  \{\wbf_i + (n_1d, n_2d)\mid i\in \Iset;\ & 0\leq n_1 \le \delta_2(\wbf_i) - \deg(\wbf_i) - 1,\\
& 0\le n_2 \le  \delta_1(\omega_1(i) + n_1d) - \deg(\wbf_i) - n_1 -1\}.
\end{array}$$
Then $H^1_{\mfr}(K[S]) \cong \bigoplus_{\wbf\in \Lcal'} K\wbf$ as graded $K$-vector spaces. In particular, we have\newline $a_1(K[S]) = \max_{\ubf \in \Lcal} \deg(\ubf)$ and $\ell(H^1_{\mfr}(K[S]) = \sharp (\Lcal')$.
\end{Theorem}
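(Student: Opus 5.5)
Since $H^1_{\mfr}(K[S]) \cong K[S'\setminus S]$ by Lemma \ref{A1}\((ii)\), everything comes down to describing $S'\setminus S$ exactly. By Lemma \ref{A3} every element of $S'\setminus S$ has the form $\ubf = \wbf_i + n_1\ebf_d + n_2\ebf_0$ with $i\in\Iset$ and $n_1,n_2\ge 0$. Here $\ebf_d=(d,0)$ and $\ebf_0=(0,d)$, so $\ubf = \wbf_i + (n_1 d, n_2 d)$. The plan is to decide, for fixed $i\in\Iset$ and fixed $(n_1,n_2)$, whether such a $\ubf$ lies in $S$, and to show that the answer gives exactly the box $\Lcal'$.

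First we check that distinct data give distinct vectors. The first coordinate of $\ubf$ is $\omega_1(i)+n_1d \equiv i \pmod d$, so $i$ is recovered from $\ubf$, and then $n_1$ and $n_2$ are recovered as well. Next we compute the invariants. We have $\deg(\ubf) = \deg(\wbf_i)+n_1+n_2$. Because $\omega_2(d-i)$ is an Apéry element and $0$ belongs to the generating set used for $\delta_2$, the degree of $\omega_2(d-i)+n_2d$ with respect to $A_2$ (where $d$ is a generator) should be $\delta_2(\wbf_i)+n_2$. For the upper bound, add $n_2$ copies of $d$. For the lower bound, take a minimal representation and remove copies of $d$. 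If some representation used fewer than $n_2$ copies of $d$, we would obtain an element of $S_{(2)}$ congruent to $d-i$ and strictly smaller than $\omega_2(d-i)$, which is impossible. So every representation contains at least $n_2$ copies of $d$, and removing exactly those shows $\delta_2(\omega_2(d-i)+n_2 d)\ge \delta_2(\wbf_i)+n_2$. The same argument gives $\delta_1(\omega_1(i)+n_1d) \ge \delta_1(\wbf_i)$, but in general only an inequality: adding $n_1d$ to the first coordinate can lower the excess, as in Remark \ref{degree}.

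Now apply Lemma \ref{A2}, using both criteria. By criterion \((iii)\), $\ubf\notin S$ if and only if $\delta_2(\wbf_i)+n_2 > \deg(\wbf_i)+n_1+n_2$, that is, $n_1 \le \delta_2(\wbf_i)-\deg(\wbf_i)-1$. By criterion \((ii)\), $\ubf\notin S$ if and only if $\delta_1(\omega_1(i)+n_1 d) > \deg(\wbf_i)+n_1+n_2$, that is, $n_2\le \delta_1(\omega_1(i)+n_1d)-\deg(\wbf_i)-n_1-1$. Since both criteria characterize the same property, $\ubf\notin S$ exactly when both inequalities hold.

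The nonnegativity claims follow from this equivalence. Take $n_2=0$ and $n_1=0$: since $i\in\Iset$ we have $\wbf_i\notin S$, so the first bound must hold for $n_1=0$, which gives $\delta_2(\wbf_i)-\deg(\wbf_i)-1\ge 0$. Now fix any $n_1$ in the allowed range and take $n_2=0$. The vector $\wbf_i+(n_1d,0)$ satisfies the $\delta_2$ condition, so it is not in $S$, and the $\delta_1$ condition must then hold with $n_2=0$, which is exactly the second nonnegativity claim.

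Conversely, for $n_1$ beyond the allowed range every such $\ubf$ lies in $S$. Therefore $S'\setminus S=\Lcal'$, each element appears once by the injectivity above, and the graded isomorphism follows. For each fixed $i$ and $n_1$, the degree $\deg(\ubf)$ grows with $n_2$, so it is maximized at the top value of $n_2$, which is the element of $\Lcal$. This gives $a_1=\max_{\Lcal}\deg$, and $\ell = \sharp\Lcal'$ is immediate.

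The main obstacle is the $\delta_2$ shift identity, which requires the Apéry minimality to control the number of copies of the generator $d$ in a minimal representation. The decisive conceptual step is to use both equivalent criteria of Lemma \ref{A2} simultaneously: the $\delta_2$ criterion bounds $n_1$, and the $\delta_1$ criterion bounds $n_2$.
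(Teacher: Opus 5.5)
Your skeleton matches the paper's. You reduce to $S'\setminus S$ by Lemma \ref{A1}(ii), write $\ubf=\wbf_i+(n_1d,n_2d)$ with $i\in\Iset$ by Lemma \ref{A3}, then bound $n_2$ via Lemma \ref{A2}(ii) and $n_1$ via Lemma \ref{A2}(iii). However, the step you single out as the main obstacle is false. The shift identity $\delta_2(\omega_2(d-i)+n_2d)=\delta_2(\wbf_i)+n_2$ does not hold in general; only ``$\le$'' (append $n_2$ copies of $d$) is true. Your argument for ``$\ge$'' runs the wrong way. If a representation of $\omega_2(d-i)+n_2d$ uses $x<n_2$ copies of $d$, deleting them leaves $\omega_2(d-i)+(n_2-x)d$, which is \emph{larger} than the Ap\'ery element, so nothing is contradicted. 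Ap\'ery minimality only forbids using \emph{more} than $n_2$ copies. This is exactly the phenomenon of Remark \ref{degree}.

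For a concrete failure, take $C(1,d-1,d)$ with $d\ge 4$, so $A_1=A_2=\{0,1,d-1,d\}$ and $\omega_1(j)=\omega_2(j)=j$. Then $\wbf_2=(2,d-2)$ and $\deg(\wbf_2)=1<2=\delta_1(\wbf_2)$, so $2\in\Iset$. Also $\delta_2(\wbf_2)=d-2$, so $n_1=0$ satisfies $n_1\le\delta_2(\wbf_2)-\deg(\wbf_2)-1=d-4$. Your criterion (iii) ``iff'' would therefore place $\wbf_2+(0,d)=(2,2d-2)$ outside $S$. But $(2,2d-2)=2\ebf_1\in S$; indeed $\delta_2(2d-2)=2$, not $d-1$. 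So the $\delta_2$-inequality is not equivalent to $\ubf\notin S$. Your claim that the two inequalities separately characterize the same property is wrong: in this example the $\delta_2$-inequality holds while the $\delta_1$-inequality fails. Your side remark $\delta_1(\omega_1(i)+n_1d)\ge\delta_1(\wbf_i)$ fails for the same reason, though you never use it.

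The repair is exactly the paper's argument: use Lemma \ref{A2}(iii) only as a necessary condition. If $\ubf\notin S$, then $\deg(\wbf_i)+n_1+n_2\le\delta_2(\omega_2(d-i)+n_2d)-1\le\delta_2(\wbf_i)+n_2-1$, which gives the bound on $n_1$. The exact characterization comes from Lemma \ref{A2}(ii) alone. This works because $\delta_1(\ubf)=\delta_1(\omega_1(i)+n_1d)$ holds on the nose, and that quantity is built into the definition of $\Lcal'$. Lemma \ref{A2}(ii) alone also yields $\Lcal'\subseteq S'\setminus S$.

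Your nonnegativity arguments are unaffected, since they only take $n_2=0$, where $\delta_2(\wbf_i+(n_1d,0))=\delta_2(\wbf_i)$ trivially. With this correction, $S'\setminus S=\Lcal'$, your injectivity remark, and the maximization over $\Lcal$ all go through as you wrote them.
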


\begin{proof} By Lemma \ref{A3}, $\wbf_i \in S'\setminus S$. Hence, by Lemma \ref{A2}\((iii)\) we have $\delta_2(\wbf_i) - \deg(\wbf_i) - 1\geq 0$. 

Let $0\leq n_1 \le \delta_2(\wbf_i) - \deg(\wbf_i) - 1$. Then $\delta_2(\wbf_i + n_1\ebf_d) =  \delta_2(\wbf_i) \geq n_1 +  \deg(\wbf_i) + 1 = \deg(\wbf_i + n_1\ebf_d) + 1$. By Lemma \ref{A2}\((iii)\), $\wbf_i + n_1\ebf_d \not\in S$. Since $\delta_1(\omega_1(i) + n_1d) = \delta_1(\wbf_i + n_1\ebf_d)$, by Lemma \ref{A2}\((ii)\), $\delta_1(\omega_1(i) + n_1d) - \deg(\wbf_i) - n_1 -1 \geq 0$.

Since $K[S]$ is a non-Cohen--Macaulay ring, by Theorem \ref{A3New}\((i)\),  $\Iset \neq \emptyset$. 
By Lemma \ref{A1}\((ii)\), $H^1_{\mfr}(K[S]) \cong K[S'\setminus S]$. Hence $a_1(K[S]) = \max_{\ubf \in S'\setminus S} \deg(\ubf)$ and $ \ell(H^1_{\mfr}(K[S]) = \sharp (S'\setminus S)$. Since $\max_{u\in \Lcal} \deg(\ubf) = \max_{u\in \Lcal'} \deg(\ubf)$, it  is enough to show that $\Lcal' = S'\setminus S$.

Let $\ubf \in S'\setminus S$. By Lemma \ref{A3}, $\ubf = \wbf_i + (n_1d, n_2d)$ for some $i\in \Iset$ and $n_1,n_2\in \Nset$. Since $\ubf \not\in S$, by Lemma \ref{A2}\((ii)\),
$$n_1 + n_2 + \deg(\wbf_i) = \deg(\ubf) \leq \delta_1(\ubf)  - 1 = \delta_1(\omega_1(i) + n_1d) -1,$$
whence
\begin{equation}\label{EA4a1}
n_2 \le \delta_1(\omega_1(i) + n_1d) - \deg(\wbf_i) - n_1 - 1.
\end{equation}
Similarly, by Lemma \ref{A2}\((iii)\), we have
\begin{align*}
    n_1 + n_2 + \deg(\wbf_i) = \deg(\ubf) & \leq \delta_2(\ubf)  - 1 = \delta_2(\omega_2(d-i) + n_2d) - 1\\
&\leq \delta_2(\omega_2(d-i)) + n_2 - 1.
\end{align*}
Hence 
\begin{equation}\label{EA4a2}
n_1 \leq \delta_2(\omega_2(d-i))  - \deg(\wbf_i) - 1 = \delta_2(\wbf_i) - \deg(\wbf_i) - 1.
\end{equation}
Combining (\ref{EA4a1}) and (\ref{EA4a2}) we get $\ubf \in \Lcal'$.

Conversely, let $\ubf = \wbf_i + (n_1d, n_2d) \in \Lcal'$. By (\ref{Ewec2}), $\ubf \in S'$. By definition of $\Lcal'$,
$$\delta_1(\ubf) = \delta_1( \omega_1(i) + n_1d) \geq \deg(\wbf_i) + n_1 + n_2 + 1 = \deg(\ubf) + 1.$$
By Lemma \ref{A2}\((ii)\), $\ubf \not\in S$. Hence $\ubf\in S' \setminus S$, as required.
\end{proof}

\begin{Remark} \label{A4R} {\rm We always have
 $\{\wbf_i|\ i\in \Iset\} \subseteq \Lcal'$.}
\end{Remark}

In the above theorem, besides Ap\'ery sets, we have to calculate the degrees of certain sums $\omega_1(i) + n_1d$. This is not an easy  task. As mentioned in Remark \ref{degree}, the larger $n_1$ is, the more difficult to compute $\delta_1(\omega_1(i)+ n_1d)$.  In the following lower and upper bounds,  only two Ap\'ery sets are involved. Therefore, these bounds are better to be applied.

\begin{Theorem} \label{A4} Assume that $\Iset \neq \emptyset$. Then
\begin{enumerate}[\((i)\)]
    \item $a_1(K[S])  \geq \max_{i\in \Iset}
\{\delta_1(\wbf_i),\ \delta_2(\wbf_i)\} - 1$;
\item $a_1(K[S])  \leq \max_{i\in \Iset}
 \{\delta_1(\wbf_i) +  \delta_2(\wbf_i) - \deg(\wbf_i) \} - 2 $;
 \item $a_1(K[S])  \leq  \left\lfloor \left( 1- \frac{a_1}{d} \right)  \max_{i\in \Iset} \{\delta_1(\wbf_i) \} +  \frac{a_{k-1}}{d}  \max_{i\in \Iset}
\{  \delta_2(\wbf_i) \} \right\rfloor - 2$.
\end{enumerate}
\end{Theorem}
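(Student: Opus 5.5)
The plan is to read all three bounds off Theorem \ref{A4a}. That theorem identifies $H^1_{\mfr}(K[S])$ with $K[\Lcal']$ and gives $a_1(K[S]) = \max_{\ubf\in\Lcal}\deg(\ubf)$. Since $\Iset\neq\emptyset$, the ring $K[S]$ is not Cohen--Macaulay by Theorem \ref{A3New}\((i)\), so Theorem \ref{A4a} applies. Throughout, fix $i\in\Iset$.

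\emph{Part \((i)\).} We exhibit two explicit elements of $\Lcal'$ and compute their degrees.
\begin{itemize}
\item Take $n_1=0$ and $n_2=\delta_1(\omega_1(i))-\deg(\wbf_i)-1$. This is admissible because Theorem \ref{A4a} guarantees this $n_2$ is $\ge 0$. The element $\wbf_i+n_2\ebf_0$ then lies in $\Lcal'$, and its degree is $\deg(\wbf_i)+n_2=\delta_1(\wbf_i)-1$.
\item Take $n_1=\delta_2(\wbf_i)-\deg(\wbf_i)-1$, which is $\ge 0$ by Theorem \ref{A4a}, and $n_2=0$. Since $n_2=0$ always satisfies $0\le n_2\le \delta_1(\omega_1(i)+n_1d)-\deg(\wbf_i)-n_1-1$ (the right-hand side is nonnegative by Theorem \ref{A4a}), the element $\wbf_i+n_1\ebf_d$ lies in $\Lcal'$. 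Its degree is $\delta_2(\wbf_i)-1$.
\end{itemize}
Taking the maximum over $i\in\Iset$ gives \((i)\).

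\emph{Part \((ii)\).} Let $\ubf=\wbf_i+(n_1d,n_2d)\in\Lcal$. By the definition of $n_2$,
$$\deg(\ubf)=\deg(\wbf_i)+n_1+n_2=\delta_1(\omega_1(i)+n_1d)-1.$$
Subadditivity of $\delta_1$ (Remark \ref{degree}), together with $\delta_1(d)=1$, gives $\delta_1(\omega_1(i)+n_1d)\le \delta_1(\wbf_i)+n_1$. Combining this with the constraint $n_1\le \delta_2(\wbf_i)-\deg(\wbf_i)-1$ yields
$$\deg(\ubf)\le \delta_1(\wbf_i)+\delta_2(\wbf_i)-\deg(\wbf_i)-2.$$
Maximizing over $\Lcal$ gives \((ii)\).

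\emph{Part \((iii)\).} Here we bound $\deg(\wbf_i)$ from below in terms of the two degrees and substitute into \((ii)\).
\begin{itemize}
\item Write $\omega_1(i)$ as a sum of exactly $\delta_1(\wbf_i)$ elements of $A_1\setminus\{0\}$. Each summand is at least $a_1$, so $\omega_1(i)\ge a_1\delta_1(\wbf_i)$.
\item Similarly, every element of $A_2\setminus\{0\}$ is at least $d-a_{k-1}$, so $\omega_2(d-i)\ge (d-a_{k-1})\delta_2(\wbf_i)$.
\item Hence $\deg(\wbf_i)\ge \bigl(a_1\delta_1(\wbf_i)+(d-a_{k-1})\delta_2(\wbf_i)\bigr)/d$.
\end{itemize}
Plugging this into the expression in \((ii)\) gives
$$\delta_1(\wbf_i)+\delta_2(\wbf_i)-\deg(\wbf_i)-2\le \Bigl(1-\frac{a_1}{d}\Bigr)\delta_1(\wbf_i)+\frac{a_{k-1}}{d}\delta_2(\wbf_i)-2.$$
Both coefficients $1-a_1/d$ and $a_{k-1}/d$ are nonnegative, so we may replace $\delta_1(\wbf_i)$ and $\delta_2(\wbf_i)$ by their maxima over $\Iset$. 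Finally, $a_1(K[S])$ is an integer, so we may take the floor.

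None of the three steps is a serious obstacle. The only point needing care is the orientation in \((iii)\): one must use the \emph{smallest} generators, $a_1$ and $d-a_{k-1}$, to obtain a lower bound on $\deg(\wbf_i)$, since that is what turns \((ii)\) into the stated coefficients.
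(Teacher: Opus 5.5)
Your proposal is correct and follows the paper's proof essentially step by step: (i) by exhibiting explicit elements of $\Lcal'$ (with $n_1=0$, resp.\ $n_2=0$), (ii) via $\delta_1(\omega_1(i)+n_1d)\le\delta_1(\wbf_i)+n_1$ combined with the bound on $n_1$, and (iii) via $\deg(\wbf_i)\ge \bigl(a_1\delta_1(\wbf_i)+(d-a_{k-1})\delta_2(\wbf_i)\bigr)/d$ substituted into (ii). Your explicit treatment of the $\delta_2$ case in (i), using the element $\wbf_i+n_1\ebf_d$ with $n_2=0$, is slightly cleaner than the paper's ``w.l.o.g.'', because $\Lcal$ is not literally symmetric in the two coordinates.
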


\begin{proof} 
\begin{enumerate}[\((i)\)]
    \item Let $j\in \Iset$ such that
$$\max
\{\delta_1(\wbf_j),\ \delta_2(\wbf_j)\} = \max_{i\in \Iset}
\{\delta_1(\wbf_i),\ \delta_2(\wbf_i)\}.$$
W.l.o.g. we may assume that $\delta_1(\wbf_j)\geq  \delta_2(\wbf_j)$. Let $\wbf := \wbf_j + (0, n_2d)$, where $n_2 :=  \delta_1(\wbf_j) - \deg(\wbf_j) - 1$. Then $\wbf \in \Lcal$ (with $n_1 = 0$). Hence, by Theorem \ref{A4a}, $a_1(K[S]) \geq \deg(\wbf) =  \delta_1(\wbf_j) - 1$.
\item Let $\ubf =  \wbf_i + (n_1d, n_2d) \in \Lcal'$. Since
$n_1 \le \delta_2(\wbf_i) - \deg(\wbf_i) - 1$ and $ n_2 \le \delta_1(\omega_1(i) + n_1d) - \deg(\wbf_i) - n_1 -1$, we have
\begin{align*}
    \deg(\ubf) & = \deg(\wbf_i) + n_1 + n_2\\
&\leq \delta_1(\omega_1(i) + n_1d) - 1\\
&\leq  \delta_1(\omega_1(i))  + n_1 - 1\\
&\leq  \delta_1(\wbf_i)  + \delta_2 (\wbf_i) - \deg(\wbf_i) - 2,
\end{align*}
 which implies \((ii)\) by Theorem \ref{A4a}.
 \item Using a representation $\omega_1(i) = \sum_{j=1}^{k-1} \beta_j a_j$ with $\sum_{j=1}^{k-1} \beta_j = \delta_1(\omega_1(i))$, we get
 $\omega_1(i) \geq  \delta_1(\wbf_i) a_1$. Similarly, $\omega_2(d-i) \geq \delta_2(\wbf_i)  (d-a_{k-1})$. Hence
 $$\deg(\wbf_i) \geq  \delta_1(\wbf_i) \frac{a_1}{d} + \delta_2(\wbf_i) \frac{ d-a_{k-1}}{d}.$$
 Replacing $\deg(\wbf_i)$ in the inequality in \((ii)\) by the right hand quantity above, we then get \((iii)\).
\end{enumerate}
\end{proof}

Note that the upper bounds in the above theorem is at most two times of the lower bound. From this point of view, they are quite good. In Section \ref{Appl} we give an example to show that all bounds of the above theorem are sharp.  

Unfortunately, the computation of Ap\'ery sets and degrees of their elements are in general very complicated, so that in general we cannot derive from the above theorem explicit bounds in terms of $a_1,\ldots,a_k$.  In particular, we cannot derive even the Gruson--Lazarsfeld--Peskine bound in \cite{GLP} for monomial curves, which states that
$$\reg(K[S]) \leq \deg(C) - \codim(C) = d- k+1.$$
Recall that for a sequence of positive integers $\Acal= \{0 =: \alpha_0, \alpha_1, \alpha_2, \ldots , \alpha_k\}$ with $0 < \alpha_1 < \alpha_2 < \cdots < \alpha_k$ its $i$-th gap is the number $\lambda_i = \alpha_i - \alpha_{i-1} - 1$, where $1\leq i\leq k$ . Note that $d-k = \lambda_1 + \cdots + \lambda_k$.

If $\lambda_i \neq 0$, by abuse of terminology, we also call the set $[a_i+1, a_{i+1}-1]$ the $i$-th gap.

The largest gap $\lambda_{\max} := \max\{ \lambda_1,\lambda_2,\ldots,\lambda_k\}$. Assume that $\lambda_{\max} = \lambda_i$ for some $1\leq i \leq k$. Then, the second largest gap is  $\lambda_{\mathrm{sl}} := \max\{ \lambda_1,\ldots,\lambda_{i-1},\lambda_{i+1},\ldots,\lambda_k\}$.
L'vovsky \cite{Lv} provided a much better bound than the one of \cite{GLP}: 
$$\reg(K[S]) \leq \lambda_{\max} + \lambda_{\mathrm{sl}} + 1.$$
In the sequel,  under some special  assumptions, we give better bounds than the above L'vovsky's bound.

First, we can reprove \cite[Theorem 2.7 and Proposition 3.4]{HeHS}.
\begin{Corollary} \label{A5} 
\begin{enumerate}[\((i)\)]
    \item Assume that $a_1=1$ and $a_{k-1} = d-1$. Let $\varepsilon = \max\{i|\ 1,\ldots,i, d-1,\ldots,d-i \in \{a_1,\ldots,a_{k-1} \}\}$ and $\lambda_{\max}$ be the largest gap of $A_1$. Then
$$\reg(K[S]) \leq \left\lfloor \frac{\lambda_{\max} -1}{\varepsilon} \right\rfloor + 2.$$
\item Assume that $a_1=1,\ldots,a_p = p, a_{p+1} \ge p+2$ for some $p\ge 1$. Denote the first non-zero gap of $A_1$ by $\lambda = a_{p+1}-p-1$. Then
$$\reg(K[S]) \geq \left\lceil \frac{\lambda}{p} \right\rceil + 1 = \left\lfloor \frac{\lambda -1}{p} \right\rfloor + 2.$$
In particular,  the bound in \((i)\) is attained if $\lambda = \lambda_{\max}$ and $ \varepsilon = p$.
\end{enumerate}
\end{Corollary}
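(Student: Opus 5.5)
For \((i)\), the plan is to work directly with Lemmas \ref{A1}, \ref{A2} and \ref{A3} rather than with the coarser bounds of Theorem \ref{A4}. Since $a_1=1$ and $d-a_{k-1}=1$, we have $1\in A_1\cap A_2$. Hence $\omega_1(i)=i$ and $\omega_2(d-i)=d-i$, so $\wbf_i=(i,d-i)$ and $\deg(\wbf_i)=1$ for all $1\le i\le d-1$. Lemma \ref{B1} then gives $a_2(K[S])=-1$, so $a_2+2=1$ is harmless, and it suffices to prove $a_1(K[S])\le \lfloor(\lambda_{\max}-1)/\varepsilon\rfloor+1$. By Lemma \ref{A3}, every $\ubf\in S'\setminus S$ has the form $\ubf=(i+n_1d,\,d-i+n_2d)$ with $i$ in some gap, i.e.\ $a_j<i<a_{j+1}$, and $\deg(\ubf)=1+n_1+n_2$.

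The key estimate is the following. Write $i=a_j+t=a_{j+1}-s$ with $s,t\ge1$ and $s+t=a_{j+1}-a_j=\lambda_{j+1}+1$. Since $1,\ldots,\varepsilon\in A_1$, we can write $i+n_1d=a_j+n_1d+t$ with $t$ a sum of $\lceil t/\varepsilon\rceil$ elements of $[1,\varepsilon]$. This gives
$$\delta_1(\ubf)\le 1+n_1+\lceil t/\varepsilon\rceil .$$
Symmetrically, $d-i=(d-a_{j+1})+s$ and $1,\ldots,\varepsilon\in A_2$, so
$$\delta_2(\ubf)\le 1+n_2+\lceil s/\varepsilon\rceil .$$
Since $\ubf\notin S$, Lemma \ref{A2} forces both $n_2\le\lceil t/\varepsilon\rceil-1$ and $n_1\le\lceil s/\varepsilon\rceil-1$. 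Therefore
$$\deg(\ubf)\le \lceil t/\varepsilon\rceil+\lceil s/\varepsilon\rceil-1 .$$
The elementary inequality $\lceil t/\varepsilon\rceil+\lceil s/\varepsilon\rceil\le\lfloor (s+t-2)/\varepsilon\rfloor+2$ gives $\deg(\ubf)\le\lfloor(\lambda_{j+1}-1)/\varepsilon\rfloor+1$. Combined with Lemma \ref{A1}\((ii)\), this yields the bound in \((i)\).

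For \((ii)\), I would exhibit an element of $S'\setminus S$ (equivalently, a nonzero component of $H^1_\mfr$) of degree $\lceil\lambda/p\rceil$. The natural candidate is $i:=a_{p+1}-1=p+\lambda$. Because $1\in A_1$ we have $\omega_1(i)=i$. The only elements of $A_1$ that are $\le i$ are $1,\ldots,p$, so $\delta_1(i)=\lceil (p+\lambda)/p\rceil=\lceil\lambda/p\rceil+1$. I would then consider $\ubf=(i,\,\omega_2(d-i)+n_2d)$ with $n_2$ chosen so that $\deg(\ubf)=\lceil\lambda/p\rceil$; this is the $n_1=0$ element of $\Lcal$ from Theorem \ref{A4a}. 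Lemma \ref{A2} then gives $\ubf\notin S$, since $\delta_1(\ubf)=\deg(\ubf)+1$. This yields $a_1(K[S])\ge\lceil\lambda/p\rceil$, hence $\reg\ge\lceil\lambda/p\rceil+1$. The identity $\lceil\lambda/p\rceil+1=\lfloor(\lambda-1)/p\rfloor+2$ is routine. The final sentence of \((ii)\) then follows by comparing with \((i)\) when $\lambda=\lambda_{\max}$ and $\varepsilon=p$.

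The main obstacle is making this construction legitimate, which needs two checks. First, $\ubf\in S'$: the first coordinate is the Ap\'ery element $i$ and the second is in $S_{(2)}$, so this holds by Lemma \ref{A1}\((i)\). Second, $\deg(\wbf_i)\le\lceil\lambda/p\rceil$, so that a non-negative $n_2$ exists; equivalently $i\in\Iset$, i.e.\ $\deg(\wbf_i)<\delta_1(\wbf_i)$. If $\omega_2(d-i)$ were large this could fail. In that case I would first try a different $i$ in the first gap, or replace $i$ by $i+n_1d$ while tracking that $\delta_1$ does not drop below $\deg+1$. The fallback is to bound $\deg(\wbf_i)$ via $F_2$ using (\ref{EAp1}).
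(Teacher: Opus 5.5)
Your part \((i)\) is correct and is, in substance, the paper's argument. The paper plugs the same two estimates $\delta_1(\wbf_x)\le 1+\lceil (x-a_i)/\varepsilon\rceil$ and $\delta_2(\wbf_x)\le 1+\lceil (a_{i+1}-x)/\varepsilon\rceil$ into Theorem \ref{A4}\((ii)\). Your bounds on $n_1,n_2$ via Lemma \ref{A2} are exactly the proof of that theorem written out.

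Part \((ii)\) has a genuine gap, at precisely the point you flag, and none of your fallbacks can close it. The hypotheses of \((ii)\) do not include $a_{k-1}=d-1$, and under them $H^1_{\mfr}(K[S])$ may vanish altogether. Take $C(1,10,100)$ and write $i=10q+r$ with $0\le q,r\le 9$. Then $\omega_2(100-i)=90q+99r$, so $\deg(\wbf_i)=q+r=\delta_1(\wbf_i)$ for all $1\le i\le 99$. Hence $\Iset=\emptyset$ and $K[S]$ is Cohen--Macaulay, while $\lceil\lambda/p\rceil=8$. No choice of $i$ or of a shift $n_1$ produces an element of $S'\setminus S$ here. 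The bound still holds only because $\reg(K[S])=a_2(K[S])+2=18$.

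The missing idea is that the lower bound can come from $H^2_{\mfr}$ instead. Since $d-1,\ldots,d-p\in A_2$, write $i=a_{p+1}-1$ as a sum of $\lceil i/p\rceil$ elements of $[1,p]$. This gives $\omega_2(d-i)\le\lceil i/p\rceil d-i$, that is, $\deg(\wbf_i)\le\lceil i/p\rceil=\delta_1(\wbf_i)=\lceil\lambda/p\rceil+1$.
\begin{itemize}
\item If this inequality is strict, then $i\in\Iset$, and your construction (or Theorem \ref{A4}\((i)\)) gives $a_1(K[S])\ge\lceil\lambda/p\rceil$.
\item If it is an equality, Lemma \ref{B1} gives $a_2(K[S])+2\ge\deg(\wbf_i)=\lceil\lambda/p\rceil+1$.
\end{itemize}
In both cases $\reg(K[S])\ge\lceil\lambda/p\rceil+1$.

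The paper's own proof of \((ii)\) simply applies Theorem \ref{A4}\((i)\) at this $i$, tacitly assuming $i\in\Iset$. That is automatic when $d-1\in A_1$, since then $\deg(\wbf_i)=1$; this covers the final ``attained'' assertion. It is not automatic in the generality in which \((ii)\) is stated, so the case split above is needed.
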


\begin{proof} 
\begin{enumerate}[\((i)\)]
    \item Since $1\in A_1, A_2$, $\omega_1(i) = \omega_2(i) = i$, whence $\wbf_i = (i, d-i)$ for all $i\le d$. In particular $\deg(\wbf_i) = 1$. By Lemma \ref{B1}, $a_2(K[S]) + 2 =1$. So, in order to prove \((i)\), it is left to estimate $a_1(K[S])$ under the assumption that $K[S]$ is not a Cohen--Macaulay ring. Let $x\in \Iset$. There is $i$ such that $a_i < x< a_{i+1}$. Since $x = a_i + x -a_i$, and $1,\ldots,\varepsilon \in A_1$, by Remark \ref{degree}(2), $\delta_1(x) \leq 1 + \lceil (x-a_i)/\varepsilon\rceil$. Similarly, $\delta_2(d-x)\leq 1 + \lceil (a_{i+1} - x)/\varepsilon\rceil$. Hence, by Theorem \ref{A4}\((ii)\)
\begin{align*}
    a_1(K[S]) + 1 &\leq  \left\lceil \frac{ x-a_i}{\varepsilon} \right\rceil + \left\lceil \frac {a_{i+1} - x} {\varepsilon} \right\rceil   \\
&\leq  \frac{ x-a_i}{\varepsilon} + 1 - \frac{1}{\varepsilon} +  \frac{a_{i+1} - x}{\varepsilon} + 1 - \frac{1}{\varepsilon} = \frac{\lambda_{\max} - 1}{\varepsilon} + 2.
\end{align*}
Hence, $a_1(K[S]) + 1\leq \lfloor \frac{\lambda_{\max} - 1}{\varepsilon} \rfloor + 2$, as required.
\item Under the setting, it is clear that $\omega_1(i) = i$ for all $i\leq d-1$ and $\delta_1(a_{p+1} -1) \geq 1+ \lceil \lambda/ p \rceil$. By Theorem \ref{A4}\((i)\), $a_1(K[S]) + 1 \geq 1+ \lceil \lambda/ p \rceil$. It is clear that  $\lceil \lambda/ p \rceil = \lfloor (\lambda -1)/ p \rfloor + 1$.
\end{enumerate}
\end{proof}

If $K[S]$ is a Cohen--Macaulay ring, using the Lemma \ref{FrobSum} and Lemma \ref{B2} (or better Lemma \ref{B1}), we get a good bound on $\reg(K[S])$.  For an example, all Cohen--Macaulay rings in Proposition \ref{A3Add} have regularity $2$.

 If $K[S]$ is a non-Cohen--Macaulay Buchsbaum ring, then by Theorem \ref{A3New}, we get $a_1(K[S])  + 1  = \max_{i\in \Iset}\{\deg(\wbf_i)\} + 1$. Hence by Lemma \ref{B1}, $\reg(K[S]) \leq a_2(K[S])  + 3.$ This is a particular case of a more general result of \cite[Corollary 2.8]{HM}, which states that for any Buchsbaum graded $K$-algebra $R$ of dimension $d$, $\reg(R) \leq  a_d(R) + d + 1$.

From this observation, using Lemma \ref{B2} and Lemma \ref{FrobSum}, we immediately get:

\begin{Corollary} \label{BbmReg1} Let $\epsilon = \max\{a_1, d-a_{k-1}\} $.  Assume that $K[S]$ is a  Buchsbaum ring, then 
\begin{enumerate}[\((i)\)]
    \item $\reg(K[S]) \leq a_1 + (d-a_{k-1}).$
    \item If,  moreover, there are $i, i+1 \in A_1$ for some $0\leq i \leq d-1$, then  $\reg(K[S]) \leq \epsilon +1$.
    \item If, moreover, there are $i, i+1,\ldots, i+\epsilon -1  \in A_1$ for some $0\leq i \leq d-\epsilon + 1$, then  $\reg(K[S]) \leq 3$.
\end{enumerate}
\end{Corollary}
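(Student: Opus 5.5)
The plan is to reduce everything to a bound on $a_2(K[S])$ and then quote Lemma \ref{B2} and Lemma \ref{FrobSum}. Recall that $\reg(K[S]) = \max\{a_1(K[S])+1,\ a_2(K[S])+2\}$.

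First I treat the Cohen--Macaulay case. Then $H^1_{\mfr}(K[S])=0$, so $\reg(K[S]) = a_2(K[S])+2$. Here $a_1(K[S])$ is $-\infty$ and causes no trouble.

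Next I treat the non-Cohen--Macaulay Buchsbaum case. Theorem \ref{A3New}(iii) shows that condition (\ref{EA3N}) holds for every $i\in\Iset$. The proof of Theorem \ref{A3New}(ii) then gives $S'\setminus S = \{\wbf_i\mid i\in\Iset\}$. By Lemma \ref{A1}(ii),
$$a_1(K[S]) = \max_{i\in\Iset}\deg(\wbf_i).$$
By Lemma \ref{B1}, $\max_{0\le i\le d-1}\deg(\wbf_i) = a_2(K[S])+2$, and therefore $a_1(K[S]) \le a_2(K[S])+2$. Combining the two cases, in either case
$$\reg(K[S]) \le a_2(K[S]) + 3.$$
This is exactly the observation made just before the statement.

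Now I apply Lemma \ref{B2}, which gives $a_2(K[S]) \le \lfloor (F_1+F_2)/d\rfloor$, and then each line of Lemma \ref{FrobSum}.
\begin{enumerate}[\((i)\)]
\item The first line of Lemma \ref{FrobSum} gives $\lfloor (F_1+F_2)/d\rfloor \le a_1+(d-a_{k-1})-3$, hence $\reg(K[S]) \le a_1 + (d-a_{k-1})$.
\item If $i,i+1\in A_1$, the second line gives $\epsilon-2$, hence $\reg(K[S]) \le \epsilon+1$.
\item If $i,\ldots,i+\epsilon-1\in A_1$, the third line gives $0$, hence $\reg(K[S])\le 3$.
\end{enumerate}

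The only step needing care is the identity $a_1(K[S]) = \max_{i\in\Iset}\deg(\wbf_i)$ in the Buchsbaum case. It relies on $\wbf_i+\ebf_0,\ \wbf_i+\ebf_d\in S$, which is derived from (\ref{EA3N}) via Lemma \ref{A2} in the proof of Theorem \ref{A3New}(ii). Once that is in place, everything else is direct substitution. The Cohen--Macaulay case even gives the sharper bound $a_2(K[S])+2$.
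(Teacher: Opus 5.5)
Your proposal is correct and follows the paper's argument. The paper also uses Theorem \ref{A3New} to get $a_1(K[S]) = \max_{i\in\Iset}\deg(\wbf_i)$, combines this with Lemma \ref{B1} to obtain $\reg(K[S]) \le a_2(K[S])+3$ for Buchsbaum $K[S]$, and then substitutes Lemma \ref{B2} and the three cases of Lemma \ref{FrobSum}. Your explicit treatment of the Cohen--Macaulay case, where $\reg(K[S]) = a_2(K[S])+2$, simply fills in a case the paper leaves implicit.
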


The main goal of \cite{T} is to determine when $K[S]$ is a Cohen--Macaulay or Buchsbaum ring in terms of $a_1,\ldots,a_k=d$. We already discussed the Cohen--Macaulay property in the previous Section \ref{Locoh}. Concerning the Buchsbaum property the problem was completely solved  in the case $a_1 = d-a_{k-1} = 1$, see \cite[Theorem 4.1 and Theorem 4.3]{T}. In a recent paper \cite{LT}, a new characterization in terms the Castelnuovo--Mumford regularity was given. In the following consequence, using the above study, we can give a very simple sufficient criterion for $K[S]$  to be a Buchsbaum ring, and quickly derive some results from \cite{LT}. Moreover, we see that the implication $ (3) \Rightarrow (1)$ in \cite[Theorem 2.4 and Theorem 2.7]{LT} holds without the assumptions there. 

\begin{Corollary} \label{BbmReg2}
\begin{enumerate}[\((i)\)]
    \item If $\reg(K[S]) = 2$, then $K[S]$ is a Buchsbaum ring.
    \item (\cite[Theorem 2.3]{LT}) If $ a_1 =1$ and $ a_{k-1} = d-1$, then $K[S]$ is a  Buchsbaum ring if and only if $\reg(K[S]) = 2$.
    \item Assume that $a_1=1$ and $a_{k-1} = d-1, \ k<d$. Let $\varepsilon = \max\{i\mid 1,\ldots,i, d-1,\ldots,d-i \in \{a_1,\ldots,a_{k-1} \} \}$. If  $\lambda_{\max} \leq \varepsilon$, then $K[S]$ is a Buchsbaum ring.
\end{enumerate}
\end{Corollary}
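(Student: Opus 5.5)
For (i) the plan is to show that $\reg(K[S]) = 2$ forces the conditions of Theorem \ref{A3New}(iii), or the Cohen--Macaulay case. Assume $\reg(K[S]) = 2$ and that $K[S]$ is not Cohen--Macaulay, so $\Iset\neq\emptyset$ by Theorem \ref{A3New}(i).

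First, $\reg(K[S]) = 2$ gives $a_1(K[S]) \le 1$. Every $\wbf_i$ has $\deg(\wbf_i)\ge 1$, since $\omega_1(i)+\omega_2(d-i)>0$ for $1\le i\le d-1$.

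By Theorem \ref{A4}(i), for every $i\in\Iset$ we get $\delta_1(\wbf_i),\delta_2(\wbf_i)\le 2$. By the definition of $\Iset$, $\delta_1(\wbf_i)\ge \deg(\wbf_i)+1\ge 2$, and the same holds for $\delta_2$ by Lemma \ref{A3}. Hence $\delta_1(\wbf_i)=\delta_2(\wbf_i)=2$ and $\deg(\wbf_i)=1$, which is exactly condition (\ref{EA3N}).

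Theorem \ref{A3New}(ii) then gives $S'\setminus S=\{\wbf_i\mid i\in\Iset\}$, all of degree $1$. For the Buchsbaum property we need $\wbf_i+\ebf_h\in S$ for every $h\in A_1$. This element has degree $2$, while $a_1(K[S])\le 1$ says $S'\setminus S$ has no elements of degree $2$. Since it lies in $S'$, it must lie in $S$, so $\mfr H^1_\mfr(K[S])=0$.

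The same degree argument also shows directly that no pair $\wbf_j-\wbf_i=\ebf_h$ with $i,j\in\Iset$ can occur, because $\deg\wbf_j$ would then be $2$. So we could equally conclude via Theorem \ref{A3New}(iii).

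For (ii) the standing assumption $a_1=1$, $a_{k-1}=d-1$ applies. One direction is (i). For the other, suppose $K[S]$ is Buchsbaum. Here $\wbf_i=(i,d-i)$ and $\deg(\wbf_i)=1$ for all $i$, as in the proof of Corollary \ref{A5}, so $a_2(K[S])+2=1$ by Lemma \ref{B1}.

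If $K[S]$ is Cohen--Macaulay, then $\reg=1$. We must check this is excluded: since $k<d$, I expect $\Iset\neq\emptyset$ under these hypotheses. The natural candidate is an $x$ in a gap with $\delta_1(x)\ge2$. The point to settle is that $d-x$ lies in the Ap\'ery set with $\delta_2\ge 2$ automatically, because $d-x\notin A_2$; indeed $\omega_2(d-x)=d-x$ and $d-x\notin A_2\setminus\{0\}$. So every $x\notin A_1$ lies in $\Iset$, and $\Iset$ is nonempty.

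In the non-Cohen--Macaulay Buchsbaum case, $S'\setminus S=\{\wbf_i\}$ has degree $1$, so $a_1=1$ and $\reg=2$.

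For (iii), with the same setting and $\lambda_{\max}\le\varepsilon$, Corollary \ref{A5}(i) gives $\reg(K[S])\le \lfloor(\lambda_{\max}-1)/\varepsilon\rfloor+2 = 2$. Since $k<d$, we have $\lambda_{\max}\ge1$, and $K[S]$ is not Cohen--Macaulay by the argument in (ii), so $\reg=2$. Part (i) then gives Buchsbaum. Even without this, Cohen--Macaulay rings are Buchsbaum.

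The only delicate step is the degree bookkeeping in (i), namely that $\deg(\wbf_i)\ge1$ and that degree-$2$ elements of $S'$ lie in $S$. Both follow from Theorem \ref{A4a} and Lemma \ref{A1}(ii).
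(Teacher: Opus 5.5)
Your proposal is correct and follows essentially the paper's route. In (i) you reduce to $S'\setminus S=\{\wbf_i\mid i\in\Iset\}$ concentrated in degree $1$, using Theorem \ref{A4}\((i)\) to make explicit the verification of (\ref{EA3N}) that the paper leaves implicit; in (iii) you combine Corollary \ref{A5}\((i)\) with non-Cohen--Macaulayness, as the paper does. The only deviation is in (ii): you argue directly from $\deg(\wbf_i)=1$ rather than citing Corollary \ref{BbmReg1}\((ii)\), and you explicitly show $\Iset=[1,d-1]\setminus A_1\neq\emptyset$ to rule out $\reg(K[S])=1$, a step the paper's proof of (ii) uses only tacitly.
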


\begin{proof} 
\begin{enumerate}[\((i)\)]
    \item Assume that $\reg(K[S]) = 2$. W.l.o.g we may assume that $K[S]$ is not a Cohen--Macaulay ring. Then $a_1(K[S]) = 1$. From Lemma \ref{A1}\((ii)\) and  Lemma \ref{A3}, $S'\setminus S = \{\wbf_i\mid i\in \Iset\}$ whence $\deg(\wbf_i) = 1$ for all $i\in \Iset$. By Theorem \ref{A3New}\((iii)\),  $K[S]$ is a Buchsbaum ring.
    \item If $a_1 = d-a_{k-1} = 1$ and $K[S]$ is a Buchsbaum ring, then $\reg(K[S]) = 2$ follows from Corollary 
\ref{BbmReg1}\((ii)\). The converse follows from \((i)\).
\item Since $k<d$ and $a_1 = d-a_{k-1} = 1$, $K[S]$   is not Cohen-Macaulay. Under the assumption, by Corollary \ref{A5}, $\reg(K[S]) \leq 2$, whence $\reg(K[S]) = 2$.  Hence, the statement follows from \((ii)\).
\end{enumerate}
\end{proof}

Rewrite $ A_1 = \{a_0,\ldots,a_k\} = \bigsqcup_{j=0}^r [b_{2j},b_{2j+1}]$, where $0=:b_0 \leq b_1 < b_2 \leq b_3 < \cdots < b_{2r}\leq b_{2r+1} := d$ and $b_{2j-1} + 2 \leq b_{2j}$ for $j=1,\ldots,r$. Note that $r$ is the number of non-zero gaps of $A_1$. Since we already assume that $k<d$, $r\geq 1$. The case $r=1$ can be solved below. 

\begin{Corollary} \label{R1}  Assume that $A_1 = [0,p] \cup [q,d]$ with $p\geq 1$ and $p+2\leq q\leq d$.
\begin{enumerate}[\((i)\)]
    \item If $q<d$ we may further  assume that $p \leq  d-q$. Then $\reg(K[S]) =\left\lceil \frac{q-1}{p} \right\rceil$.
    \item If $q = d$, then $\reg(K[S]) \in \{a+1, a+2\}$, where 
$$a = \left\lfloor \frac{d}{p} \right\rfloor - \left\lceil \frac{p\lfloor \frac{d}{p} \rfloor + 2}{d} \right\rceil.$$
\end{enumerate}
\end{Corollary}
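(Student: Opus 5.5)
For \((i)\), the plan is to reduce to Corollary \ref{A5}. Since $p\geq 1$ we have $a_1=1$. Since $q<d$ and $p\le d-q$, the elements $d-1,\ldots,d-p$ all lie in $[q,d]\subseteq A_1$, so $a_{k-1}=d-1$. With $\varepsilon$ defined as in Corollary \ref{A5}\((i)\), the integers $1,\ldots,p$ and $d-1,\ldots,d-p$ belong to $\{a_1,\ldots,a_{k-1}\}$, while $p+1\notin A_1$ because $p+2\le q$. Hence $\varepsilon=p$.

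There is only one non-zero gap, namely $\lambda=q-p-1$, so $\lambda=\lambda_{\max}$. It is also the first non-zero gap, and $a_1=1,\ldots,a_p=p$, $a_{p+1}=q\geq p+2$. So Corollary \ref{A5}\((i)\) and \((ii)\) give matching upper and lower bounds:
$$\reg(K[S])=\left\lfloor \frac{q-p-2}{p}\right\rfloor+2=\left\lceil\frac{q-p-1}{p}\right\rceil+1=\left\lceil \frac{q-1}{p}\right\rceil .$$

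For \((ii)\), where $A_1=[0,p]\cup\{d\}$, I would compute the Apéry sets and degrees explicitly and then apply Lemma \ref{B1} and Theorem \ref{A3New}\((i)\).

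\begin{enumerate}[\((1)\)]
\item Since $1\in A_1$, we have $\omega_1(i)=i$ and $\delta_1(i)=\lceil i/p\rceil$, because the generators below $d$ are exactly $1,\ldots,p$.
\item For $A_2=\{0,d-p,\ldots,d-1,d\}$, fix $0<i<d$. A sum of $m$ generators lies in $[m(d-p),md]$, and every integer in that interval is such a sum. An element congruent to $-i$ modulo $d$ and of the form $md-i$ needs a total deficit $i\le mp$. The plan is to show from this that $\omega_2(d-i)=\lceil i/p\rceil d-i$ and $\delta_2(\omega_2(d-i))=\lceil i/p\rceil$.
\item This is the step needing the most care: the minimality of the Apéry element and of its degree. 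The candidate must be shown to be the least element of $\langle A_2\rangle$ in its residue class, and no shorter representation can exist. The argument is that any representation with $m'$ summands is at most $m'd$. Moreover, the deficit from $m'd$ is at most $m'p$ when the multiple $d$ is not used, and using $d$ only wastes summands.
\end{enumerate}

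Given these computations, $\wbf_i=(i,\lceil i/p\rceil d-i)$, so
$$\deg(\wbf_i)=\lceil i/p\rceil=\delta_1(\wbf_i)=\delta_2(\wbf_i).$$
Hence $\Iset=\emptyset$, $K[S]$ is Cohen--Macaulay by Theorem \ref{A3New}\((i)\), and $H^1_\mfr(K[S])=0$. By Lemma \ref{B1},
$$\reg(K[S])=a_2(K[S])+2=\max_{i<d}\lceil i/p\rceil=\lceil (d-1)/p\rceil .$$

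It remains to compare this with $a$. Write $d=p\lfloor d/p\rfloor+r$ with $0\le r<p$. The term $\left\lceil (p\lfloor d/p\rfloor+2)/d\right\rceil$ equals $2$ if $r\le 1$ and equals $1$ if $r\ge 2$. Checking the three cases $r=0$, $r=1$ and $r\ge2$ gives $\lceil (d-1)/p\rceil=a+2$ in each case. This lies in $\{a+1,a+2\}$, which proves \((ii)\), and in fact pins down the value exactly.
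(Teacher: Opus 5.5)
Your proof is correct. Part (i) is essentially the paper's argument: the paper just says it follows from Corollary \ref{A5}(ii), via the remark there that the bound in (i) is attained when $\lambda=\lambda_{\max}$ and $\varepsilon=p$. You supply the checks $a_{k-1}=d-1$, $\varepsilon=p$ and $\lambda=\lambda_{\max}$, which the paper leaves implicit.

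Part (ii) takes a genuinely different route. The paper proceeds in three steps:
\begin{enumerate}
\item It cites \cite[Corollary 3.4]{T} for the Cohen--Macaulay property.
\item It uses the closed formula $F_2=\lfloor (d-2)/p\rfloor (d-p)-1$ together with Lemma \ref{B2} to get $a_2(K[S])\le a$.
\item It uses the single Ap\'ery element $F_2+d$ of $A_2$ to get $a_2(K[S])\ge a-1$.
\end{enumerate}
This is why the paper only locates $\reg(K[S])$ in $\{a+1,a+2\}$.

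You instead compute both Ap\'ery sets and all degrees. This gives $\Iset=\emptyset$ internally through Theorem \ref{A3New}(i), with no external citation. It also gives the exact value $\reg(K[S])=\lceil (d-1)/p\rceil$ through Lemma \ref{B1}, which shows that the paper's upper bound $a_2(K[S])\le a$ is attained.

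Your step (3) closes in one line. If $jd-i$ is a sum of $m$ elements of $\{d-p,\ldots,d\}$, then $m(d-p)\le jd-i\le md$. From $md\ge jd-i>(j-1)d$ you get $m\ge j$. Then $j(d-p)\le m(d-p)\le jd-i$ gives $i\le jp$. This proves minimality of $\omega_2(d-i)=\lceil i/p\rceil d-i$ and of $\delta_2=\lceil i/p\rceil$ at once. There is no need to single out the generator $d$, which simply has deficit $0$.

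One small correction concerns the final comparison with $a$. In the case $r=0$, the equality $\lceil (d-1)/p\rceil=\lfloor d/p\rfloor$ holds only for $p\ge 2$. For $p=1$ one gets $\lceil (d-1)/p\rceil=d-1=a+1$. Here $k=p+1$, and the paper assumes $k\ge 3$ throughout, so this case is excluded. You should say so explicitly rather than claim $a+2$ unconditionally. The stated conclusion $\reg(K[S])\in\{a+1,a+2\}$ holds in either case.
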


\begin{proof} 
\begin{enumerate}[\((i)\)]
    \item follows from Corollary \ref{A5}\((ii)\).
    \item By \cite[Corollary 3.4]{T}, $K[S]$ is Cohen--Macaulay. Hence we only need to estimate $a_2(K[S]) + 2$. Since $1\in A_1$, $F_1 = -1$ and $\omega_1(j) = j$. By Brauer's bound \cite[Theorem 7]{Bra} (see also Theorem 3.3.1 in \cite{Ra} and discussion after that), we have
$F_2 = \left\lfloor \frac{d-2}{p} \right\rfloor (d-p) -1$. Hence, by Lemma \ref{B2}, 
\begin{align*}
    a_2(K[S]) \leq  \left\lfloor \frac{F_2-1}{d} \right\rfloor &\leq \left\lfloor \frac{ \lfloor \frac{d}{p}\rfloor (d-p) -2}{d} \right\rfloor \\
&= \left\lfloor \left\lfloor \frac{d}{p} \right\rfloor -  \frac{p \lfloor \frac{d}{p}\rfloor + 2}{d} \right\rfloor =  \left\lfloor \frac{d}{p} \right\rfloor - \left\lceil \frac{p \lfloor \frac{d}{p}\rfloor + 2}{d} \right\rceil = a.
\end{align*}
On the other hand, assume that $F_2 \equiv (d-j)\bmod d$ for some $1 \geq j \leq d-1$. Then $\omega_2(d-j) = F_2 + d$. By Lemma \ref{B1},
$$a_2(K[S]) \geq \deg(\wbf_j) = \left\lceil \frac{F_2 - 1 + d+j}{d} \right\rceil - 2 \geq a-1,$$
as required.
\end{enumerate}
\end{proof}

Let $r\geq 2$. In \cite[Section 3]{LT}, under some very special assumptions, there are some very good bounds established for non-smooth curves. For an example, \cite[Theorem 3.4]{LT} states that if $b_1 = 0, b_{2r} < d$ and $2b_2 - 1 \leq b_3,\ r\geq 2$, then
$$\reg(K[S]) \leq \left\lfloor \frac{\lambda_{\max} - 1}{\varepsilon'} \right\rfloor + 3,$$
where $\varepsilon' = \min\{b_3, d- b_{2r} \}$. Using Theorem  \ref{A4}, we can reprove this result. However, the computation is still not  simple, so that we do not present it here. Instead, we consider the case of possibly small $b_3 - b_2$.

\begin{Proposition} \label{Bd2} Let $A_1 = \{a_0,\ldots,a_k\} = \bigsqcup_{j=0}^r [b_{2j},b_{2j+1}]$, where $0=:b_0 \leq b_1 < b_2 \leq b_3 < \cdots < b_{2r}\leq b_{2r+1} := d$ and $b_{2j-1} + 2 \leq b_{2j}$ for $j=1,\ldots,r$, $r\ge 2$.  If $b_1 = 0, b_{2r} < d$ and $b_5 - b_4 + 1\geq b_2$, then
$$\reg(K[S]) \leq 2+ \max\left\{ \left\lfloor \frac{ \lambda_{\mathrm{sl}}}{d-b_{2r}} \right\rfloor + \left\lfloor \frac{\lambda_{\max}}{b_2} \right\rfloor;\ \left\lfloor \frac{ \lambda_{\max}}{d-b_{2r}} \right\rfloor + \left\lfloor \frac{\lambda_{\mathrm{sl}}}{b_2} \right\rfloor \right\}.$$
\end{Proposition}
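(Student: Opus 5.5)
The plan is to bound $a_2(K[S])$ and $a_1(K[S])$ separately and combine them through $\reg(K[S])=\max\{a_1(K[S])+1,\ a_2(K[S])+2\}$.

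\textbf{Translating the hypotheses.} First I will rewrite the assumptions in terms of the two generating sets. The condition $b_1=0$ gives $a_1=b_2\ge 2$. The condition $b_{2r}<d$ gives $[b_{2r},d]\subseteq A_1$, so $a_{k-1}=d-1$. Consequently $[0,e]\subseteq A_2$ with $e:=d-b_{2r}\ge 1$. In particular $1\in A_2$, so $\omega_2(j)=j$ for all $j$.

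The condition $b_5-b_4+1\ge b_2$ says that the block $[b_4,b_5]$ contains $b_2=a_1$ consecutive integers. Together with $a_1\in A_1$, this puts every integer $\ge b_4$ in $\langle A_1\rangle$. Hence $F_1\le b_4-1<d$, and therefore $\omega_1(j)\in\{j,\,j+d\}$ for every $j$. It follows that $\wbf_j=(\omega_1(j),d-j)$ and $\deg(\wbf_j)\in\{1,2\}$.

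For $a_2$, I apply Lemma \ref{FrobSum}, last case, with $\epsilon=\max\{b_2,1\}=b_2$ and the run $[b_4,b_4+b_2-1]\subseteq A_1$. Together with Lemma \ref{B2} this gives $a_2(K[S])\le 0$, so $a_2(K[S])+2\le 2$. This is at most the claimed bound. If $\Iset=\emptyset$ we are done by Theorem \ref{A3New}\((i)\), so from now on I assume $\Iset\ne\emptyset$.

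\textbf{Reduction to degree estimates.} For $a_1$, I will use Theorem \ref{A4}\((ii)\). It suffices to show, for each $x\in\Iset$, that
\[
\delta_1(\wbf_x)+\delta_2(\wbf_x)-\deg(\wbf_x)\le 3+\max\Bigl\{\Bigl\lfloor\tfrac{\lambda_{\mathrm{sl}}}{e}\Bigr\rfloor+\Bigl\lfloor\tfrac{\lambda_{\max}}{b_2}\Bigr\rfloor,\ \Bigl\lfloor\tfrac{\lambda_{\max}}{e}\Bigr\rfloor+\Bigl\lfloor\tfrac{\lambda_{\mathrm{sl}}}{b_2}\Bigr\rfloor\Bigr\}.
\]
Fix $x\in\Iset$, so $x$ lies in a gap $a_j<x<a_{j+1}$ of length $\lambda$.

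\textbf{Bounding $\delta_2$.} Write
\[
d-x=(d-a_{j+1})+(a_{j+1}-x)
\]
and split $a_{j+1}-x$ into parts from $[1,e]\subseteq A_2$. This gives $\delta_2(\wbf_x)\le 1+\lceil (a_{j+1}-x)/e\rceil$, which is controlled by $\lfloor \lambda/e\rfloor$ plus a bounded correction.

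\textbf{Bounding $\delta_1$.} I will subtract copies of $b_2=a_1$ from $\omega_1(x)$, which is $x$ or $x+d$. Each subtraction costs one unit of $\delta_1$. I stop as soon as the result lands in $A_1$.
\begin{itemize}
\item If $\omega_1(x)=x$, the descent passes through the current gap. Either it lands in a block after about $\lceil (x-a_j)/b_2\rceil$ steps, or it jumps over a block into the gap below and then needs at most about $\lambda'/b_2$ further steps, where $\lambda'$ is the length of that lower gap.
\item If $\omega_1(x)=x+d$, the starting point lies above $d$. In that case I will instead write $x+d$ as an element of $[b_4,b_5]$ plus the appropriate element reached by the descent. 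The extra $\deg(\wbf_x)=2$ absorbs this additional generator.
\end{itemize}

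\textbf{Main obstacle.} The hard part is making the $\delta_1$ descent precise. The estimates must be organised so that one gap is divided by $b_2$ and the other by $e$, with at most one of them being $\lambda_{\max}$. Concretely, I must show that a single descent never crosses two large gaps. The reason to expect this is that a descent crossing a whole gap lands in the next gap down, and that gap is distinct from the one containing $x$. So when the gap containing $x$ is $\lambda_{\max}$, the other gap involved is at most $\lambda_{\mathrm{sl}}$; this produces the two terms of the maximum. I also have to check that the ceilings combine into the stated floors with the constant $2$, using $\lceil m/e\rceil+\lceil m'/b_2\rceil\le \lfloor (m+m'')/\cdot\rfloor+2$-type manipulations as in the proof of Corollary \ref{A5}\((i)\).
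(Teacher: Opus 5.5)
Your skeleton matches the paper's and is correct as far as it goes: $1\in A_2$, $F_1\le b_4-1$, $\deg(\wbf_x)\in\{1,2\}$, $a_2(K[S])\le 0$, the reduction to bounding $\delta_1(\wbf_x)+\delta_2(\wbf_x)-\deg(\wbf_x)$ through Theorem~\ref{A4}(ii), and $\delta_2(\wbf_x)\le 1+\lceil (a_{i+1}-x)/(d-b_{2r})\rceil$. The genuine gap is the $\delta_1$ estimate.

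Descending from $x$ by copies of $b_2$ until you land in $A_1$ does not work. The only block guaranteed to contain $b_2$ consecutive integers is $[b_4,b_5]$. Every other block, including $[b_2,b_3]$, may be shorter than $b_2$. So the descent can jump over many blocks and cross many gaps, or never meet $A_1$ at all. In particular, your claim that after jumping one block it needs only about $\lambda'/b_2$ further steps is false.

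Take $A_1=\{0,10\}\cup[12,21]\cup\{28,35,42,49,56,63,70\}\cup[77,87]$. It satisfies all hypotheses, with $b_2=d-b_{2r}=10$, $\lambda_{\max}=9$ and $\lambda_{\mathrm{sl}}=6$. So the claimed bound is $\reg(K[S])\le 2$. For $x=74\in\Iset$ we have $\wbf_{74}=(74,13)$, $\deg(\wbf_{74})=1$ and $\delta_2(\wbf_{74})=2$, so you need $\delta_1(74)\le 2$. Your descent $74,64,54,44,34,24,14$ crosses five gaps and yields only $\delta_1\le 7$, although $74=56+18$. Below $b_4$ it can fail outright: for $A_1=\{0\}\cup[10,11]\cup[30,39]$ we have $22=11+11\in\Iset$, and the descent $22,12,2$ never reaches $A_1$. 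So the obstacle you isolate (a descent never crosses two large gaps) is not the right target. The cost accumulates over many small gaps.

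The missing idea is to stop descending and use $[b_4,b_5]$ as a residue absorber. Anchor it at the gap containing $x-b_4$. If $x-b_4\in A_1$, then $\delta_1(x)\le 2$. Otherwise $a_j<x-b_4<a_{j+1}$, and you write $x-b_4-a_j=qb_2+r$ with $0\le r<b_2$. Then $x=a_j+(b_4+r)+qb_2$ with $b_4+r\in[b_4,b_5]\subseteq A_1$, which gives $\delta_1(x)\le 2+\lfloor\lambda_j/b_2\rfloor$.

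This is what makes one gap appear divided by $b_2$ and the other by $d-b_{2r}$. If the gap $j$ of $x-b_4$ differs from the gap $i$ of $x$, the two terms of the maximum come out directly. If $j=i$, you add the two estimates over the same gap, using $\epsilon'=\min\{b_2,d-b_{2r}\}$ and $b_4\ge 4$.

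The remaining cases need their own treatment:
\begin{itemize}
\item If $\omega_1(x)=x+d$, apply the same pivot to $x+d$. Use $b_3+b_4$ when $x+d-b_4$ falls back into the gap of $x$, and start from $d+x=b_2+(d+x-b_2)$ when $x$ lies in the first gap.
\item If $b_3<x<b_4$ with $x\in\langle[b_2,b_3]\rangle$, argue separately. In a representation of minimal length, any two summands add up to more than $b_3$, which gives $\delta_1(x)\le 2+\lfloor (x-b_3-1)/b_2\rfloor$.
\end{itemize}
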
 

\begin{proof} Let
$$M:=  \max\left\{ \left\lfloor \frac{ \lambda_{\mathrm{sl}}}{d-b_{2r}} \right\rfloor + \left\lfloor \frac{\lambda_{\max}}{b_2} \right\rfloor;\ \left\lfloor \frac{ \lambda_{\max}}{d-b_{2r}} \right\rfloor + \left\lfloor \frac{\lambda_{\mathrm{sl}}}{b_2} \right\rfloor \right\}\geq 0.$$
From the assumption we see that $1\in A_2$. Hence $F_2 = -1$ and $\omega_2(d- x) = d- x$ for all $x\leq d$. Since $b_5 - b_4 + 1\geq b_2$, $b_1= 0$,  we have $1 \leq F_1 \leq b_4 -1$ and
$$\omega_1(x) = \begin{cases} x & \text{if}\quad  b_4 \leq x < d \ \text{or} \ x \in \langle [b_2,b_3] \rangle \ \text{and}\ x< b_4,\\
x+ d &\text{otherwise}.
\end{cases}$$
 By Lemma \ref{B2}, $a_2(K[S]) + 2 \leq \lfloor \frac{F_1+ F_2}{d}\rfloor + 2 = 2$. So, it is left to  show that $a_1(K[S]) \leq M + 1 $, provided  that $\Iset\neq \emptyset$. By Theorem \ref{A4}\((ii)\) there is  $x\in \Iset$ such that
 $$a_1(K[S]) \leq \delta_1(\wbf_x ) + \delta_2(\wbf_x) - \deg(\wbf_x) - 2 =: D(x).$$
 It is enough to show that $D(x) \leq M + 1$. Let $\epsilon' := \min\{ b_2 , d-b_{2r}\}$. We distinguish three cases.
 \begin{enumerate}[\(\textbf{\text{Case}}\ \)\bf a.]
     \item $ x>b_5$. In this case $\wbf_x = (x, d-x)$, whence $\deg(\wbf_x) = 1$.  There is $i$ such that $b_5\leq a_i < x < a_{i+1}$. In the proof of Corollary \ref{A5}\((i)\) we know that 
 \begin{equation}\label{EBd21}
\delta_2(\wbf_x) \leq 1 + \left\lceil \frac{a_{i+1} - x}{d- b_{2r}} \right\rceil \leq 1+ \left\lceil \frac{a_{i+1} - x}{\epsilon'} \right\rceil.
\end{equation}
  Note that $x = b_4 + (x-b_4)$. If $x-b_4 \in A_1$, then $\delta_1(\wbf_x) = \delta_1(x) = 2$. We now assume that $x-b_4 \not\in A_1$
  \begin{enumerate}[\((\text{a}1)\)]
      \item If $ x-b_4 > a_i$, then using $x= b_4 + x-b_4 = (b_4 + b'_2) + a_i +  \left\lfloor \frac{x-b_4 - a_i}{b_2} \right\rfloor b_2$, where $0\leq b'_2 < b_2$, and noticing that $b_4 + b'_2 \leq b_5$, whence $b_4+b'_2 \in A_1$, we can conclude that 
  \begin{equation}\label{EBd22}
  \delta_1(\wbf_x) = \delta_1(x) \leq 2 + \left\lfloor \frac{x-b_4 - a_i}{b_2} \right\rfloor  \leq 2+ \left\lfloor \frac{x-b_4 - a_i}{\epsilon'} \right\rfloor.
  \end{equation}
 From (\ref{EBd21}) and (\ref{EBd22}) we get
  \begin{equation}\label{EBd23} 
  D(x)  \leq \left\lceil \frac{a_{i+1} - x}{\epsilon'} \right\rceil +  \left\lfloor \frac{x-b_4 - a_i}{\epsilon'} \right\rfloor \leq \left\lceil \frac{\lambda_i -3 }{\epsilon'} \right\rceil  \leq 1 + \left\lfloor \frac{\lambda_i  -4}{\epsilon'} \right\rfloor \leq M+1.
   \end{equation}
      \item $a_j < x-b_4 < a_{j+1}$ for some $j<i$, that is $x$ and $x-b_4$ belong to two different gaps of $A_1$. Using $x= b_4 + x-b_4 = (b_4 + b^{''}_2) + a_j +  \left\lfloor \frac{x-b_4 - a_j}{b_2} \right\rfloor b_2$, where $0\leq b^{''}_2 < b_2$, we get
  \begin{equation}\label{EBd24}
  \delta_1(\wbf_x) = \delta_1(x) \leq 2 + \left\lfloor \frac{x-b_4 - a_j}{b_2} \right\rfloor \leq 2 + \left\lfloor \frac{a_{j+1} - 1 - a_j}{b_2} \right\rfloor \leq 2+ \left\lfloor \frac{\lambda_j}{b_2} \right\rfloor.
   \end{equation}
   From (\ref{EBd21}) we also have $\delta_2(\wbf_x) \leq 1 + \left\lceil \frac{ \lambda_i}{d-b_{2r}} \right\rceil$. Hence,
    \begin{equation}\label{EBd25}
  D(x)  \leq \left\lceil \frac{ \lambda_i}{d-b_{2r}} \right\rceil + \left\lfloor \frac{\lambda_j}{b_2} \right\rfloor \leq 1+  \left\lfloor \frac{ \lambda_i}{d-b_{2r}} \right\rfloor + \left\lfloor \frac{\lambda_j}{b_2} \right\rfloor.
     \end{equation}
   Note that if $\lambda_k\ge \lambda_l$ are two different gaps, then $\lambda_k \leq \lambda_{\max}$ and $\lambda_l \leq \lambda_{\mathrm{sl}}$. This implies
   $\left\lfloor \frac{ \lambda_i}{d-b_{2r}} \right\rfloor + \left\lfloor \frac{\lambda_j}{b_2} \right\rfloor \leq  M.$ Using this remark, from (\ref{EBd23}) and (\ref{EBd25}) we get in Case $a$ that $ D(x) \leq  M +1$.
  \end{enumerate}
     \item $ b_3 < x< b_4$.
     \begin{enumerate}[\((\text{b}1)\)]
         \item $x\not\in \langle [b_2,b_3]\rangle$. Then $\omega_1(x) = x+d$, whence $\wbf_x = (d+x, d-x)$ and $\deg(\wbf_x) = 2$. Assume that $b_3 = a_i$. Then $a_{i+1} = b_4$, and $x$ belongs to the gap $\lambda_i$. Write $d+x = b_4 + (d+x-b_4)$. Note that $b_3 <  d+x - b_4 < d$. If $d+x-b_4 \in A_1$, then $ \delta_1(\wbf_x) = \delta_1(d+ x)  =2$. Assume now that $d+x-b_4\not\in A_1$.
  
   If $d+x-b_4$ belongs to $j$-th gap with $i\neq j$, then similar to the subcase \(\text{(a2)}\), we can conclude  that 
  $$\delta_1(\wbf_x) = \delta_1(d+ x)  \leq 2+ \left\lfloor \frac{\lambda_j}{b_2} \right\rfloor.$$
  On the other hand,  using the relation $d-x = (d-b_4) + (b_4-x)$,  as shown in the proof of Corollary \ref{A5}\((i)\), 
   \begin{equation}\label{EBd26}
   \delta_2(\wbf_x)  \leq 1 +  \left\lceil \frac{b_4-x}{d-b_{2r}} \right\rceil \leq  1+ \left\lceil \frac{\lambda_i}{d-b_{2r}} \right\rceil \leq 2+ \left\lfloor \frac{\lambda_i -1}{d-b_{2r}} \right\rfloor.
    \end{equation}
   Hence,  the argument at the end of Case \text{(a2)} gives  $D(x) \le M$.
  
  The remaining case is  $b_3< d+x-b_4 < b_4$; that is both $x$ and $d+x-b_4$ belong to the same $i$-th gap. Using the relation $d+x = b_3 + b_4 + (d+x- b_3-b_4)$, similar to the subcase (a1) we get
  $$\delta_1(\wbf_x) = \delta_1(d+ x)  \leq 2+ \left\lfloor \frac{d+x-b_3-b_4}{b_2} \right\rfloor \leq 2+ \left\lfloor \frac{d+x-b_3-b_4}{\epsilon'} \right\rfloor.$$
  On the other hand $d-x = 2(d-b_4) + (b_4 - (d+x-b_4))$. Note that $1, d-b_4 \in A_2$. As shown in the proof of Corollary \ref{A5}\((i)\), 
   $$\delta_2(\wbf_x)  = \delta_2(d-x) \leq 2 + \left\lceil \frac{b_4 - (d+x-b_4)}{d-b_{2r}} \right\rceil \leq 2 +  \left\lceil \frac{2b_4 - (d+x)}{\epsilon'} \right\rceil.$$
   Hence,
   \begin{align*}
       D(x) &\leq \left\lfloor \frac{d+x-b_3-b_4}{\epsilon'} \right\rfloor + 
 \left\lceil \frac{2b_4 - (d+x)}{\epsilon'} \right\rceil \\
&\leq \left\lceil \frac{b_4 - b_3}{\epsilon'} \right\rceil = \left\lfloor \frac{\lambda_i}{\epsilon'} \right\rfloor + 1 \leq M+ 1.
   \end{align*}
         \item $x\in \langle [b_2,b_3]\rangle$. Then $\omega_1(x) =x$ and $\deg(\wbf_x) = 1$. By (\ref{EBd26}),
   $$\delta_2(\wbf_x)  \leq 1+ \left\lceil \frac{b_4 - x}{d-b_{2r}} \right\rceil \leq 1 +  \left\lceil \frac{b_4 - x}{\epsilon'} \right\rceil.$$
   Since $x< b_4$, we can find non-negative integers $n_j$ such that $x =  \sum_{j=b_2}^{b_3} n_jj$ such that $\delta_1(x) =  \sum_{j=b_2}^{b_3} n_j$. Assume that $\delta_1(x) \geq 3$. Let $b$ be a subsum of two summands in the sum $ \sum_{j=b_2}^{b_3} n_jj$ (could be the same). Then $b\geq b_3+1$, because otherwise replacing this subsum by $b$ would give a presentation of $x$ with the sum of coefficients strictly smaller than $\delta_1(x)$, which is impossible. We have $x-b\geq (\delta_1(x) - 2)b_2$. Hence,
   $$\delta_1(\wbf_x) = \delta_1(x) \leq  2+ \left\lfloor \frac{x-b}{b_2} \right\rfloor \leq  2+ \left\lfloor \frac{x-b_3-1}{b_2} \right\rfloor \leq 2+ \left\lfloor \frac{x-b_3-1}{\epsilon'} \right\rfloor.$$
   This implies
   $$D(x) \leq   \left\lceil \frac{b_4 - x}{\epsilon'} \right\rceil + \left\lfloor \frac{x-b_3 -1}{\epsilon'} \right\rfloor \leq  \left\lceil \frac{b_4-b_3 -1}{\epsilon'} \right\rceil = \left\lfloor \frac{\lambda_i}{\epsilon'} \right\rfloor \le M.$$
     \end{enumerate}
     \item $0< x< b_2$, i.e. $x$ belongs to the first gap (note that $a_0 = 0, \ a_1 = b_2$). Then $x\not\in \langle A_1\rangle$, whence $\omega_1(x) =d+x$, $\wbf_x = (d+x, d-x)$ and $\deg(\wbf_x) = 2$. We have $d+ x = b_2 + (d+x-b_2)$. If $d+x-b_2$ belongs to the first gap too, then $d+x-b_2 < b_2$. This implies $d< 2b_2 - 1< b_5$, a contradiction. Hence if $d+x-b_2\not\in A_1$, then $x$ and $d+x-b_2$ belong to two different gaps of $A_1$. The argument in the case \text{(a2)} gives us $D(x)\leq M + 1$, as required. 
 \end{enumerate}
\end{proof}

In many consequences of Theorem \ref{A4} above, $A_1$ contains a sequence of consecutive integers. We now give another kind of situations.

\begin{Lemma} \label{A7} Let $\Acal =\{ \alpha_1< \alpha_2 < \cdots < \alpha_k\}\ (k\geq 2)$. Assume that $\alpha_i
 \not\equiv \alpha_j \bmod \alpha_k$ for all $i\neq j$. Then for all $1\leq i\leq d-1$ we have
$$\delta(\omega(i)) \leq 1+ \left\lfloor \frac{2\alpha_k \lfloor \alpha_1/k \rfloor + \lambda_{\max}}{\alpha_1} \right\rfloor  \leq 1 + \frac{2\alpha_k}{k} + \frac{\lambda_{\max}}{\alpha_1}.$$
\end{Lemma}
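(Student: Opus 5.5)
Note first that the condition $\alpha_i\not\equiv\alpha_j \bmod \alpha_k$ is automatic for $1\le i<j\le k$, since $0<\alpha_j-\alpha_i<\alpha_k$. It also does not match the hypothesis of Lemma \ref{Selmer1}. I will therefore read the intended hypothesis as $\alpha_i\not\equiv\alpha_j \bmod \alpha_1$, which is exactly what Lemma \ref{Selmer1} requires. I will also read $d$ as $\alpha_k$, so that $\omega(i)$ runs over the Ap\'ery set modulo $\alpha_k$.

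The plan has two steps. The first is to bound the size of Ap\'ery elements. By (\ref{EAp1}) and Lemma \ref{Selmer1}, with $m:=\lfloor \alpha_1/k\rfloor$, we get
$$\omega(i)\le F(\Acal)+\alpha_k\le 2\alpha_k m-\alpha_1+\alpha_k .$$
The second is to convert a size bound for an Ap\'ery element into a degree bound. For this I aim at the auxiliary inequality
$$\delta(n)\le 1+\left\lfloor \frac{n-\alpha_k+\alpha_1+\lambda_{\max}}{\alpha_1}\right\rfloor \qquad (n=\omega(i),\ 1\le i\le \alpha_k-1). \qquad (\ast)$$
Substituting the first bound into $(\ast)$ gives exactly the first inequality of the lemma, because $n-\alpha_k+\alpha_1\le 2\alpha_k m$. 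The second inequality is then immediate from $\lfloor x\rfloor\le x$ and $m\le \alpha_1/k$.

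To prove $(\ast)$, I would take a representation $n=\sum_j x_j\alpha_j$ with $\sum_j x_j=\delta(n)$. Since $n$ lies in the Ap\'ery set, any representation has $x_k=0$. If some summand $\alpha_j$ satisfies $\alpha_j\ge \alpha_k-\alpha_1-\lambda_{\max}$, remove it. The remaining $\delta(n)-1$ summands are each at least $\alpha_1$, so
$$(\delta(n)-1)\alpha_1\le n-\alpha_j\le n-\alpha_k+\alpha_1+\lambda_{\max},$$
which gives $(\ast)$. Otherwise every summand is small, and I would use an exchange argument, in the spirit of subcase (b2) of Proposition \ref{Bd2}. Consider a pair of summands $\alpha_a+\alpha_b$ and three possibilities for it. If $\alpha_a+\alpha_b\in\Acal$, replacing the pair by this single generator lowers the degree, contradicting minimality. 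If $\alpha_a+\alpha_b\ge\alpha_k$, then $n-\alpha_k\in\langle\Acal\rangle$, contradicting $n=\omega(i)$. Hence every pairwise sum lies strictly inside a gap $(\alpha_l,\alpha_{l+1})$ below $\alpha_k$. In that case I would try to rewrite $\alpha_a+\alpha_b=\alpha_l+c$ with $0<c\le\lambda_{\max}$. The aim is to absorb $c$ into another summand, or to show that the larger part $\alpha_l$ already serves as the big summand needed in the first case, up to the slack $\lambda_{\max}$.

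The main obstacle is this last step: making the exchange argument rigorous when all summands are small and every pairwise sum falls into a gap. If it does not work cleanly, the fallback is to run the Selmer/Erd\H{o}s--Graham argument from Lemma \ref{Selmer1} directly on the degree. That argument gives, for each residue $i$ modulo $\alpha_1$, an element $t_i\in(2m)\Acal$ with $t_i\equiv i \bmod \alpha_1$, so $\delta(t_i)\le 2m$. One then compares $\omega(i)$ with $t_i$ plus multiples of $\alpha_1$, using $\lambda_{\max}$ to control how far an element of $\langle\Acal\rangle$ must be shifted to reach the correct residue class modulo $\alpha_k$. Either route needs exactly one extra generator and one gap of slack, which matches the shape $1+\lfloor(\cdots+\lambda_{\max})/\alpha_1\rfloor$ of the claimed bound.
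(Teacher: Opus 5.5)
Your reading of the statement is right: the hypothesis must be taken modulo $\alpha_1$, as Theorem \ref{A9} uses it, and $d$ means $\alpha_k$. Your first case, where some summand satisfies $\alpha_j\ge\alpha_k-\alpha_1-\lambda_{\max}$, is also correct.

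The plan nevertheless has a genuine gap: the auxiliary inequality $(\ast)$ is false, so no exchange argument can close the ``all summands small'' case. Take $\Acal=\{5,9,13,17,21\}$, which has distinct residues modulo $5$ and $\lambda_{\max}=4$, and let $n=\omega(5)=5$. Then $\delta(n)=1$, while the right-hand side of $(\ast)$ is $1+\lfloor(5-21+5+4)/5\rfloor=-1$. The trouble is that $(\ast)$ measures $n$ against $\alpha_k$, which is meaningful only for Ap\'ery elements near the top.

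The individual exchange step is also wrong. From $\alpha_a+\alpha_b\ge\alpha_k$ you cannot conclude $n-\alpha_k\in\langle\Acal\rangle$, because $\alpha_a+\alpha_b-\alpha_k$ need not lie in the semigroup. For $\Acal=\{4,7,9\}$ one has $\omega(5)=14=7+7$, yet $14-9=5\notin\langle 4,7,9\rangle$. Your fallback via $t_i\in(2m)\Acal$ is only a sketch: it does not say how the correct residue modulo $\alpha_k$ is reached cheaply.

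The missing idea is to pivot on the Frobenius number $F$ rather than on $\alpha_k$. Instead of analysing a minimal representation, you \emph{construct} a short one, using that every integer $>F$ lies in $\langle\Acal\rangle$.
\begin{itemize}
\item If $\omega(i)\le F$, then simply $\delta(\omega(i))\le\omega(i)/\alpha_1\le F/\alpha_1$.
\item Otherwise, by (\ref{EAp1}), $\omega(i)=F+j$ with $1\le j\le\alpha_k$. Let $\alpha_l$ be the largest generator with $\alpha_l<j$, and subtract nothing if $j\le\alpha_1$. Then $\omega(i)-\alpha_l=F+(j-\alpha_l)$ exceeds $F$, so it lies in $\langle\Acal\rangle$. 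Moreover $j-\alpha_l\le\lambda_{\max}+1$, hence $\delta(\omega(i))\le 1+(F+\lambda_{\max}+1)/\alpha_1$.
\end{itemize}
Lemma \ref{Selmer1} gives $F\le 2\alpha_k\lfloor\alpha_1/k\rfloor-\alpha_1$. Its $-\alpha_1$ term absorbs the extra $+1$, which yields the stated bound.

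This is essentially the paper's argument. The paper takes $\alpha_l\le j$ with remainder $\varepsilon=j-\alpha_l$. When $j=\alpha_l$ (for instance $j=\alpha_k$), that remainder is $F$ itself, which is not in $\langle\Acal\rangle$. The strict choice $\alpha_l<j$, together with the slack just described, is the safe version.
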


\begin{proof}  The second inequality immediately follows from the first one. Let $F:= F(\alpha_1,\ldots,\alpha_k)$. There is an unique $1\le j \le \alpha_k$ such that $\omega(i) \equiv F+j \pmod{\alpha_k}$. Since $F$ is a largest integer not belonging to $\langle \Acal \rangle$, $F+j \in \langle \Acal \rangle$ and
$\omega(i) = F+j - p\alpha_k$ for some $p\in \Nset$. If $p\ge 1$, then $\omega(i) \leq F$. Note that for any $n\in \langle \Acal \rangle$, $\delta(n) \leq \frac{n}{\alpha_1}$. By Lemma \ref{Selmer1}, $F\le 2\alpha_k \lfloor \alpha_1/ k \rfloor$. Hence 
$$\delta(\omega(i)) \leq \frac{F}{\alpha_1} \leq \frac{2\alpha_k \lfloor \frac{\alpha_1}{k}\rfloor}{\alpha_1} -1.$$
Next, assume that $\omega(i) = F+j$. Let $l$ be the largest index such that $\alpha_l \le j$. Then $j-\alpha_l \leq \alpha_{l+1} - 1 - \alpha_l = \lambda_l$ and
$\omega(i) = F+ j = F+ \varepsilon + 1\cdot \alpha_l$, where $0\leq \varepsilon \leq \lambda_l$. Hence
$$ \delta(\omega(i)) \leq 1 + \delta(F+ \varepsilon) \leq  1+ \frac{F+ \varepsilon}{\alpha_1} \leq  1+ \frac{F+ \lambda_l}{\alpha_1} \leq 1+ \frac{F+ \lambda_{\max}}{\alpha_1}.$$
Using again the above bound on $F$, we get
$$ \delta(\omega(i)) \leq 1 +  \frac{2\alpha_k \lfloor \alpha_1/k \rfloor + \lambda_{\max}}{\alpha_1} ,$$
which implies the first inequality.
\end{proof}

We can now give an explicit upper bound on $a_1(K[S])$.

\begin{Theorem} \label{A9} Assume that $a_i
 \not\equiv a_j \pmod{a_1}$ and  $a_i
 \not\equiv a_j \pmod{d -a_{k-1}}$ for all $i\neq j$. Then
 $$a_1(K[S]) <  \frac{2d+\lambda_{\max}}{k}\left( 1+\frac{a_{k-1}-a_1}{d} \right),$$
 where $\lambda_{\max}$ is the largest gap of the sequence $a_1,\ldots,a_{k-1}, a_k=:d$.
 
 As a consequence, we have
 $$\reg(K[S]) \leq 1+  \frac{2d+\lambda_{\max}}{k}\left( 1+\frac{a_{k-1}-a_1}{d} \right).$$
\end{Theorem}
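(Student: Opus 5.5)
We will combine Theorem \ref{A4}\((iii)\) with the degree bound of Lemma \ref{A7}, applied separately to the two Ap\'ery sets. If $\Iset=\emptyset$, then $K[S]$ is Cohen--Macaulay and $H^1_\mfr(K[S])=0$, so $a_1(K[S])=-\infty$ and there is nothing to prove. Hence assume $\Iset\neq\emptyset$. By Theorem \ref{A4}\((iii)\),
$$a_1(K[S]) \le \left(1-\frac{a_1}{d}\right)\max_{i\in\Iset}\delta_1(\wbf_i) + \frac{a_{k-1}}{d}\max_{i\in\Iset}\delta_2(\wbf_i) - 2,$$
so it suffices to bound $\delta_1(\omega_1(i))$ and $\delta_2(\omega_2(d-i))$ uniformly in $i$.

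For $\delta_1$ we take $\Acal = A_1\setminus\{0\}=\{a_1<\cdots<a_k=d\}$, which has $k$ elements. The hypothesis $a_i\not\equiv a_j \pmod{a_1}$ is exactly what Lemma \ref{Selmer1} needs to give $F_1\le 2d\lfloor a_1/k\rfloor - a_1$. The statement of Lemma \ref{A7} is written with the congruence modulo $\alpha_k$, but its proof only uses Lemma \ref{Selmer1}, which requires distinct residues modulo $\alpha_1$. So we will rerun the proof of Lemma \ref{A7} with the modulo-$a_1$ hypothesis. That proof splits $\omega_1(i)$ into two cases: either $\omega_1(i)\le F_1$, or $\omega_1(i)=F_1+j$ with $F_1+j=(F_1+\varepsilon)+a_l$ and $0\le\varepsilon\le\lambda_{\max}$. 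This yields
$$\delta_1(\omega_1(i)) \le 1+\frac{2d\lfloor a_1/k\rfloor+\lambda_{\max}}{a_1} \le 1+\frac{2d}{k}+\frac{\lambda_{\max}}{a_1}.$$

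For $\delta_2$ we argue symmetrically with $A_2\setminus\{0\}=\{d-a_{k-1}<\cdots<d-a_1<d\}$. Its smallest element is $d-a_{k-1}$, its residues modulo $d-a_{k-1}$ are distinct by hypothesis, and its gaps are the gaps of $A_1$ in reverse order, so its largest gap is again $\lambda_{\max}$. This gives
$$\delta_2(\omega_2(d-i))\le 1+\frac{2d}{k}+\frac{\lambda_{\max}}{d-a_{k-1}}.$$

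Next we substitute both bounds into Theorem \ref{A4}\((iii)\). The factor $(1-a_1/d)=(d-a_1)/d$ multiplies $\lambda_{\max}/a_1$, and $a_{k-1}/d$ multiplies $\lambda_{\max}/(d-a_{k-1})$. Here lies the main obstacle: these mixed terms must be absorbed into $\frac{\lambda_{\max}}{k}(1+\frac{a_{k-1}-a_1}{d})$. We will first use the sharper, unsimplified form $\frac{2d\lfloor a_1/k\rfloor+\lambda_{\max}}{a_1}$. Since $a_1$ is at least $k$ (the residues are distinct modulo $a_1$ and there are $k$ of them), we have $\lfloor a_1/k\rfloor\ge 1$, which makes $\lambda_{\max}/a_1\le\lambda_{\max}/k$ and also $2d\lfloor a_1/k\rfloor/a_1\le 2d/k$. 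Applying the same reasoning with $d-a_{k-1}\ge k$ gives
$$a_1(K[S]) \le \frac{2d+\lambda_{\max}}{k}\left(\frac{d-a_1}{d}+\frac{a_{k-1}}{d}\right) + \left(1-\frac{a_1}{d}+\frac{a_{k-1}}{d}\right) - 2,$$
and $1-\frac{a_1}{d}+\frac{a_{k-1}}{d}<2$ yields the strict inequality, noting that $\frac{d-a_1+a_{k-1}}{d}=1+\frac{a_{k-1}-a_1}{d}$.

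Finally, $\reg(K[S])=\max\{a_1(K[S])+1,\ a_2(K[S])+2\}$. For the first term, the strict bound just obtained gives $a_1(K[S])+1 < 1+\frac{2d+\lambda_{\max}}{k}(1+\frac{a_{k-1}-a_1}{d})$. For the second term, we use Lemma \ref{B2} together with Lemma \ref{Selmer1} applied to both $F_1$ and $F_2$ to get
$$a_2(K[S])\le \frac{F_1+F_2}{d} \le \frac{2d(a_1+d-a_{k-1})/k-a_1-(d-a_{k-1})}{d}.$$
This is then compared with the target bound, which is routine because the right-hand side contains the term $2d/k$ times a factor of at least $1$.
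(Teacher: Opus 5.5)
Your proposal is correct and is essentially the paper's proof. It combines Theorem~\ref{A4}(iii) with the degree bound of Lemma~\ref{A7} for both Ap\'ery sets; your remark that Lemma~\ref{A7}'s hypothesis must be read modulo the smallest generator, as Lemma~\ref{Selmer1} requires, is exactly what the paper uses implicitly. It then uses $a_1\ge k$ and $d-a_{k-1}\ge k$ to get both degree bounds $\le 1+(2d+\lambda_{\max})/k$, and finishes with Lemma~\ref{B2} and Lemma~\ref{Selmer1} for $a_2(K[S])$.

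The only step you wave through is the last comparison. The remark that the right-hand side contains ``$2d/k$ times a factor at least $1$'' does not by itself absorb the extra additive constant. The paper closes this step with $a_{k-1}-a_1\ge k-2$ and $\lambda_{\max}\ge a_1-1\ge k-1$, which give $a_2(K[S])+2<(2d+4)/k\le (2d+3k-5)/k\le \frac{2d+\lambda_{\max}}{k}\bigl(1+\frac{a_{k-1}-a_1}{d}\bigr)$ for $k\ge 3$.
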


\begin{proof} By the assumption, it follows that $a_1 \geq k$ and $d- a_{k-1} \geq k$. Note that the largest gap of the sequence $d- a_{k-1},\ldots, d - a_1, d$ is also the same $\lambda_{\max}$. By Lemma \ref{A7}, we then get $\delta_1(\wbf_i) \leq 1+  \frac{2d + \lambda_{\max}}{k}$ and $\delta_2(\wbf_i) \leq 1+ \frac{2d + \lambda_{\max}}{k}$. Hence, by Theorem \ref{A4}\((iii)\), we get
\begin{align*}
    a_1(K[S]) & \leq \left\lfloor \left( 1- \frac{a_1}{d} \right)  \max_{i\in \Iset} \{\delta_1(\omega_1(i)) \} +  \frac{a_{k-1}}{d}  \max_{i\in \Iset}
\{  \delta_2(\omega_2(d-i)) \} \right\rfloor  - 2\\
& \leq \left\lfloor 1 +   \frac{2d + \lambda_{\max}}{k} \right\rfloor \left( 1 - \frac{a_1}{d} + \frac{a_{k-1}}{d} \right) - 2 <  \frac{2d+\lambda_{\max}}{k}\left( 1+\frac{a_{k-1}-a_1}{d} \right),
\end{align*}
as required.

For the bound on $\reg(K[S])$, by Lemma \ref{Selmer1}, we have $F_1 < 2da_1/k$ and $F_2 < 2d(d-a_{k-1})/k$. Recall that $k\ge 3$. Since $a_{k-1} - a_1 \geq k-1$, by Lemma \ref{B2}, it implies
$$a_2(K[S]) +2 \leq \frac{F_1+F_2}{d}  +2 < 2\cdot\frac{d + a_1 - a_{k-1}}{k} < \frac{2d - 2k+4}{k} + 2  = \frac{2d+4}{k} .$$
On the other hand, note that  under our assumption $a_1\geq k$, whence $\lambda_{max} \geq a_1-a_0-1 \geq k-1$. This implies 
\begin{align*}
    \frac{2d+\lambda_{\max}}{k} \left( 1+\frac{a_{k-1}-a_1}{d} \right) &\geq \frac{2d}{k} + \frac{\lambda_{\max }+ 2(a_{k-1}- a_1)}{k} \\
&> \frac{2d + 3k-5}{k}  \geq a_2(K[S]) +2.
\end{align*}
Since $\reg([K(S)] = \max\{a_1(K[S]) + 1,\ a_2(K[S]) + 2\}$,  the bound on $\reg(K[S])$ follows from the above inequality and the inequality on $a_1(K[S])$.
\end{proof}
The above bound is less than $6d/k$. If $A_1$ has $r\geq 1$ non-zero gaps, then $\lambda_{\max} + \lambda_{\mathrm{sl}} \geq 2(d-k)/r$. It is immediate to check that if $k\geq 4r$ and $d\geq 4k$, then $6d/k \leq 2(d-k)/r$. Thus, if $k\geq 4r$ and $d\geq 4k$, the bound in Theorem \ref{A9} is better than L'vovsky's bound.

\section{Application and Examples} \label{Appl}

Note that  from the proof of  Corollary \ref{A5}\((i)\), the upper bound there for smooth curves is exactly the upper bound in Theorem \ref{A4}\((ii)\). Hence by Corollary \ref{A5}\((ii)\), this upper bound is attained by smooth curves of the type $C(1,\ldots,p, a_{p+1},\ldots, a_l, d-q, d-q+1,\ldots,d)$, where  $q\geq p$, $a_{p+1} \geq p+2$ and $a_l < d-q$. Below we give an example of non-smooth curves for which all  bounds of Theorem \ref{A4} are attained. In this example, the first bound of Lemma \ref{A7} is also attained.

\begin{Example} \label{C1} Given $k\ge 2$, let $a_1 = 2k-1,\ a_2 = 2a_1+1 = 4k-1,\ a_3 = 2a_1 + 2,\ldots, a_k = 2a_1 +(k-1) = 5k-3$. This example satisfies the assumptions of Lemma \ref{A7} applied to $\Acal = \{a_1,\ldots,a_k\} = A_1\setminus \{0\}$ and attains its first upper bound, which is $6$.

All bounds in Theorem \ref{A4} are also attained by this example and they are equal to $5$.

{\rm  Indeed, we have $\lambda_{\max} = a_2 - a_1 - 1 = 2k-1 $. Hence, by  the first upper bound of Lemma \ref{A7}, for all $x< a_k$, 
\begin{equation}\label{EC11}
\delta_1(\wbf_x) = \delta_{\Acal}(\omega_{\Acal}(x)) \leq  1+  \left\lfloor \frac{2 (5k-3) \lfloor (2k-1)/k \rfloor + 2k-1}{2k-1} \right\rfloor = 1+ \left\lfloor \frac{12k-7}{2k-1} \right\rfloor = 6.
\end{equation}
We have $a_k + 2k = 7k-3 = 3a_1 + k < 4a_1$. Since $a_2,\ldots,a_k > 2a_1$, if $a_k + 2k = \mu_1a_1 + \cdots + \mu_k a_k$, then $\mu_2 + \cdots + \mu_k \leq 1$. If $\mu_2 + \cdots + \mu_k = 1$, then $2a_1 + 1 = a_2 \leq \mu_2 a_2 + \cdots + \mu_k a_k \leq a_k = 2a_1 + (k-1)$, whence $a_1+1 \leq \mu_1 a_1 \leq a_1 + (k-1) < 2 a_1$, a contradiction. The case $\mu_2 + \cdots + \mu_k = 0$ gives $\mu_1 a_1 = 3a_1 + k$, impossible. So $2k + a_k \not\in \langle \Acal \rangle$. On the other hand, $2k+ 2a_k = 10k-6 = 6a_1$. Hence $\omega_{\Acal} (2k) = 2k+2a_k$ and $\delta_{\Acal}(2k) \leq 6$. Assume that $6a_1 = \mu_1a_1 + \cdots + \mu_k a_k$ with $\mu:= \mu_2 + \cdots + \mu_k > 0$.
Then $2\mu a_1 + \mu = \mu a_2  \leq 6a_1 $ implies $\mu \leq 2$. Since $2\mu a_1 + \mu = \mu a_2  \leq \mu_2 a_2 + \cdots + \mu_{k-1}a_{k-1} \leq \mu a_{k-1} = 2\mu a_1 + \mu (k-2) $, we have 
$$(6-2\mu)a_1 - \mu(k-2) \leq \mu_1 a_1 \leq (6-2\mu) a_1 - \mu.$$
This implies 
$$6- 2\mu - \frac{\mu(k-2)}{2k-1} \leq \mu_1 \leq \left\lfloor 6-2\mu - \frac{\mu}{2k-1} \right\rfloor = 5-2\mu.$$
A direction computation with $\mu = 1,2$ gives a contradiction. Therefore $\mu= 0$ and $\delta_{\Acal}(2k) = 6$.

We now compute $a_1(K[S])$. Since $1\in A_2$, $\omega_2 (x) = x$ for all $x< a_k$. Using the argument in the proof of Corollary \ref{A5}\((i)\), we get $\max_{1\leq x\leq a_k} \delta_2(\wbf_x) = \delta_2(\wbf_{3k-3}) = 4$. Hence, by Theorem \ref{A4}\((iii)\)
$$a_1(K[S]) \leq  \left\lfloor 6\left( 1- \frac{2k-1}{5k-3} \right)  + 4\cdot\frac{5k-4}{5k-3} \right\rfloor - 2 = 5.$$
On the other hand, by the above computation $\wbf_{2k} = (12k-6,3k-3) \in \Iset$, since $\delta_1(\wbf_{2k}) = 6 > \deg(\wbf_{2k}) = 3$. Taking $n_1 = 0$ and $n_2 = \delta_1(\wbf_{2k}) - \deg(\wbf_{2k}) - 1 = 2$ in Theorem \ref{A4a}, we get $a_1(K[S]) \geq  \deg(\wbf_{2k} + 2\ebf_{a_{2k}}) = 5$. Hence $a_1(K[S])  = 5$. This  is exactly the lower bound in Theorem \ref{A4}\((i)\) and the upper bound in Theorem \ref{A4}\((iii)\). Since the statement \((iii)\) follows from \((ii)\) in Theorem \ref{A4}, the bound in \((ii)\) is also attained.

By Lemma \ref{Selmer1}, $F_1 \leq 2a_k - a_1$. Since $1\in A_2$, $F_2 = -1$. Hence, by Lemma \ref{B2}, $a_2(K[S]) \leq 1$, whence $\reg(K[S]) = 6$.
}
\end{Example}

We are now applying Algorithm 2 to calculate some examples. The next example was considered in \cite[Example 4.3]{BGi}, where $a_1(K[S]), \ a_2(K[S]$ and $\reg(K[S])$ were computed.

\begin{Example} \label{ExAl1} {\rm
Let $C= C(5,9,11,20)$. Algorithm 2 gives the result in Table 1. Here, for each $i \neq 5,9,11,20$ we give the vector $\wbf_i$, its degree, the degrees $(\delta_1,\delta_2)$ of its component. In the column $\Iset$, the value 1 means that this $i\in \Iset$, otherwise it is left blank.

From Table 1 we can see that $a_2(K[S]) = \deg(\wbf_{17}) - 2 = 3$ and $\Iset = \{ 4\}$. In particular $K[S]$ is not Cohen--Macaulay. Since $\delta_1(\wbf_4) = \delta_2(\wbf_4) = 4 = \deg(\wbf_4) + 1$, the set $ \Lcal = \{\wbf_4\}$. Hence by Theorem \ref{A4a}, $a_1(K[S]) = 3$,  $\reg(K[S]) = 5$ and $\ell(H^1_{\mfr}(K[S]) = 1$.

This example is interesting in some senses. 

First, by Theorem \ref{A3New}\((iii)\), $K[S]$ is a Buchsbaum ring. In the proof of  \cite[Theorem 4.1]{BCFH}, it was shown that for any curve in $\Pset^3$, $a_2(K[S]) + 2 \leq a_1(K[S]) + 1$, or equivalently, $\reg(K[S])$ is always attained at the first local cohomology. However, the current example shows that this property already  does not hold  for Buchsbaum curves in $\Pset^4$.

\centering \textbf{Table 1. $C(5,9,11,20)$.}
\begin{center}
    \begin{tabular}{|l|c|c|c|c|| l|c|c|c|c|}
\hline
$i$ & $ \wbf_i$ & $\deg$ & $(\delta_1,\delta_2)$ & $\Iset $ & $i$ & $ \wbf_i$ & $\deg$ & $(\delta_1,\delta_2)$ & $\Iset $ \\
\hline
1& (21,39) & 2 &(3,3)&  & 12& (32,48) & 4 & (4,4)&  \\
2 & (22,18) & 2 & (2,2)&  &13& (33,27)& 3& (3,3) & \\
3 & (23,37) & 3& (3,3) &  &14& (14,26) & 2& (2,2)& \\
4 &(24,36)& 3 &(4,4)&  4 &15& (15,45)& 3&(3,3) & 4\\
6 & (26,54) &4& (4,4)& & 16 &(16,24)& 2&(2,2)& \\
7& (27,33)& 3& (3,3) &  & 17 & (37,63)& 5& (5,5) & \\
8 & (28,52)& 4& (4,4) &  & 18&(18,22)& 2& (2,2) & \\
10& (10,30)& 2& (2,2)&  & 19& (19,41)& 3 & (3,3)& \\
\hline
\end{tabular}
\end{center}

Secondly, as mentioned before Corollary \ref{BbmReg2}\((ii)\), for Buchsbaum curves we have $\reg(K[S])$ only takes one of two values $\{a_2(K[S]) + 2,\ a_2(K[S]) + 3 \}$. By Lemma \ref{B2}, all Buchsbaum curves satisfying Corollary \ref{BbmReg2}\((i)\) have $a_2(K[S]) = -1$ and $\reg([K(S)]) = 2 = a_2(K[S]) + 3$. The current example shows that $\reg([K(S)]) = a_2(K[S]) + 2$ can happen.
} 
\end{Example}

In the following example we give more curves in $\Pset^4$ with $ a_2(K[S]) > a_1(K[S])  $. These curves satisfy the condition (\ref{EA3N}) in Theorem \ref{A3New}, which  however can be  Buchsbaum or not.

\begin{Example} \label{ExAl2} {\rm 
The first example in Table 2 is the curve $C_3$ considered in Example \ref{A3c1}.

\centering \textbf{Table 2}
\begin{center} 

    \begin{tabular}{|c|c|c|c|c|c|c|}
\hline
$C(a_1,\ldots,a_k)$ & $\Iset$ & Buchsbaum& $\ell$ & $a_1$ & $a_2$ &$\reg$ \\
\hline
(1,4,21,85) & 20,41,63,62,83& No & 4 & 7 & 8 &10\\
(1,5,11,46)& 9 & Yes & 1 & 4 & 5 & 7\\
(12,17,20,29) & 19& Yes & 1& 3 & 4 & 6\\
(45,46,65,121)& 73&  Yes& 1 & 6 & 9 & 11\\
(3,4,17,55) & 13,30,47 & No & 3 & 5 & 6 & 8\\
(4,10,21,61) & 44,48 & No &  2 & 5 & 6 & 8\\
(25,35,44,123) & 62, 106& No & 2 & 7 & 10 & 12\\
\hline
\end{tabular}
\end{center}
 }
 \end{Example}

In the above example we have $ \max\{ \delta_2(\wbf_i)- \deg(\wbf_i) -1|\ i\in \Iset\} = 0$ for all curves. By Theorem \ref{A4a}, using  Algorithm 2 we can already compute the invariants in Table 2. In the general case, by Theorem \ref{A4a}, we also need to compute $\delta_1(\omega_1(i) + n_1d)$ for certain $i$ and $n_1$. Note that $i$ and $n_1$ are already calculated in Algorithm 2. If $\Iset$ only consists of a few elements, one can use Integer Programming to calculate   $\delta_1(\omega_1(i) + n_1d)$, as mentioned in Remark \ref{degree}. Otherwise, we need  Algorithm 3 in the Appendix.

 \begin{Example} \label{ExAl3} {\rm
In this example, all curves have $ \max\{ \delta_2(\wbf_i)- \deg(\wbf_i) -1|\ i\in \Iset\} > 0$. So, it is non-Buchsbaum. The computation is done by Algorithm 3. 

\centering \textbf{Table 3}
\begin{center} 
    \begin{tabular}{|c|c|c|c|c|c|}
\hline
$C(a_1,\ldots,a_k)$ & $\Iset$ &  $\ell$ & $a_1$ & $a_2$ &$\reg$\\
 \hline
 (2,10,22,57) & 18& 2 & 5& 6 & 8\\
 (2,7,12,14)& 1,3,4,6,8,10,11,13 & 40& 5& 0 & 6\\
 (39,58,68,129,158)& $\sharp(\Iset)= 21$& 80 & 8& 9 &11\\
\hline
\end{tabular}
\end{center}}
\end{Example}

We conclude this paper by giving an application to the structure theory of sumsets. Let $A = \{0=: a_0 < a_1< \cdots < a_k =:d\}$  be a set of $k+1 \geq 4$  non-negative
relatively prime integers.  The $h$-fold sumset  is the set $hA$.  Sumsets play a central role in the Additive Number Theory, see \cite{Na}. 

The following result is called the fundamental structure theorem in Additive Number Theory.

\begin{Theorem}
 {\rm \cite[Theorem 1.1]{Na}} \label{NaTh11}  There are integers $c_1, c_2$  and sets $C_i \subseteq [ 0, c_i-2]$ ($i=1,2$)  such that
$$hA = C_1 \sqcup [c_1,  h d - c_2] \sqcup (h d- C_2),$$
for all $h\ge \max\{1,\ (k-1)(d - 1)d \}$.
\end{Theorem}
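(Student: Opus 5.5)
The plan is to translate the statement into the language of the semigroup $S\subseteq\Nset^2$ of Section \ref{Locoh}, with $A=A_1$, and to read the structure of $hA$ off the graded pieces of $S'$ and $S$. The basic dictionary is
$$n\in hA \iff (n,hd-n)\in S.$$
Indeed, an element of $S$ of degree $h$ is exactly a sum of $h$ generators $\ebf_{a_i}=(a_i,d-a_i)$. Its first coordinate is then a sum of $h$ elements of $A$ (zeros allowed, since $a_0=0\in A$), and its second coordinate is $hd$ minus the first. So $hA$ is the set of first coordinates of $S_h:=\{\ubf\in S\mid \deg(\ubf)=h\}$.

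\textbf{Step 1: pass from $S$ to $S'$ in large degrees.} By Lemma \ref{A1}\((ii)\), $H^1_{\mfr}(K[S])\cong K[S'\setminus S]$ as graded modules. Hence for every $h>a_1(K[S])$ we have $S'_h=S_h$. Since $a_1(K[S])+1\le \reg(K[S])\le d-k+1$ by the Gruson--Lazarsfeld--Peskine bound recalled before Corollary \ref{A5}, this holds for all $h\ge d-k+1$. Using $S'=(S_{(1)}\times S_{(2)})\cap H$, we get for such $h$:
$$hA=\{n\in[0,hd]\mid n\in S_{(1)}\ \text{and}\ hd-n\in S_{(2)}\}.$$

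\textbf{Step 2: read off the decomposition using the Frobenius numbers.} Put $c_1:=F_1+1$ and $c_2:=F_2+1$, where $F_i$ is the Frobenius number of $A_i\setminus\{0\}$. Set
$$C_1:=S_{(1)}\cap[0,F_1-1]\subseteq[0,c_1-2],\qquad C_2:=S_{(2)}\cap[0,F_2-1]\subseteq[0,c_2-2],$$
which is legitimate because $F_i\notin S_{(i)}$. Now assume $hd\ge F_1+F_2+2$. Split $n\in[0,hd]$ into three ranges.
\begin{itemize}
\item If $n\le F_1$, then $hd-n\ge hd-F_1>F_2$, so $hd-n\in S_{(2)}$ automatically. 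Thus $n\in hA$ if and only if $n\in C_1$.
\item If $F_1<n<hd-F_2$, both conditions hold automatically, so this range contributes the whole interval $[c_1,hd-c_2]$.
\item The range $n\ge hd-F_2$ is symmetric to the first and contributes exactly $hd-C_2$.
\end{itemize}
The three pieces are disjoint because $F_1<hd-F_2$.

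\textbf{Step 3: check the range of $h$.} By Schur's bound (\ref{EFRb1}), $F_1,F_2\le (d-1)^2-1$, so $F_1+F_2+2\le 2(d-1)^2< 2d(d-1)$. Since $k\ge 3$, any $h\ge (k-1)(d-1)d\ge 2d(d-1)$ satisfies $hd\ge F_1+F_2+2$. Such $h$ also exceeds $d-k+1$, which is what Step 1 needs. So the hypothesis $h\ge\max\{1,(k-1)(d-1)d\}$ covers both requirements; in fact a much weaker threshold suffices.

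The main obstacle is Step 1, namely justifying that in degrees above $a_1(K[S])$ every element of $S'$ already lies in $S$. This rests on Lemma \ref{A1}\((ii)\) together with an upper bound on $a_1(K[S])$. Here I would invoke the GLP bound as quoted in the paper. Alternatively, Lemma \ref{A3} together with Theorem \ref{A4}\((ii)\) gives a bound on $a_1(K[S])$ in terms of degrees of Ap\'ery set elements, which are at most $2d$ each by Remark \ref{degree}. The remaining steps are elementary bookkeeping with Frobenius numbers.
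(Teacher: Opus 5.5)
Your proof is correct. The paper itself gives no proof of this statement; it only quotes Nathanson, whose argument is elementary and self-contained (a direct representation count, with no commutative algebra). That counting is also the source of the explicit but crude threshold $(k-1)(d-1)d$.

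Your route is genuinely different. It closely parallels the reasoning behind Lemma \ref{GSTh31}\((i)\) of Gimenez--Gonz\'alez-S\'anchez. You identify $hA$ with the first coordinates of $S'_h$ once $H^1_{\mfr}(K[S])$ vanishes in degree $h$, and then read the two end-sets off the Frobenius numbers. The dictionary, the use of Lemma \ref{A1}\((ii)\), the description $S'=(S_{(1)}\times S_{(2)})\cap H$ and the three-range bookkeeping with $c_i=F_i+1$, $C_i=S_{(i)}\cap[0,F_i-1]$ all check out. These data are independent of $h$, as required.

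What this buys you is an explicit description of $c_i$ and $C_i$ and a far better threshold, which you undersell in Step 3. Schur's bound actually gives
$$F_1+F_2+2\le(\lambda_1+\lambda_k)(d-1)\le(d-k)(d-1)<(d-k+1)d.$$
Together with the Gruson--Lazarsfeld--Peskine bound, your argument therefore yields the decomposition for all $h\ge d-k+1$. That is the Granville--Walker bound $\sigma(A)\le d-k+1$, which is exactly the deduction from Gruson--Lazarsfeld--Peskine recalled in Section \ref{Appl}.

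The price is reliance on a deep geometric theorem where Nathanson needs nothing. Your fallback via Theorem \ref{A4}\((ii)\) avoids this and stays inside the Ap\'ery-set machinery of Sections \ref{Locoh}--\ref{Breg}. By Schur's bound and Remark \ref{degree}, $\omega_1(j)\le a_1(d-1)$ and $\omega_2(j)\le(d-a_{k-1})(d-1)$. Hence $\delta_1(\wbf_i),\delta_2(\wbf_i)\le d-1$, so $a_1(K[S])\le 2d-5$, and the resulting threshold is still linear in $d$.
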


In order to get information on the smallest possible integer $h$  in the above theorem, the following notation was recently introduced.

\begin{Definition}
{\rm \cite[Definition 1.3]{GS} \label{Sumreg} The least integer $\sigma$ such that the decomposition in Theorem \ref{NaTh11} holds for all $h\ge \sigma$ is called the {\it sumsets regularity} of  $A$  and  denoted  by $\sigma(A)$.}
 \end{Definition}
 
 It turns out that there is a close relationship between the above invariant and other invariants of the ring $K[S]$ of the monomial curve $C(a_1,\ldots,a_k)$, see, e.g.,  \cite{El}. Below we recall a relationship between the sumset regularity and the Castelnuovo--Mumford regularity $\reg(K[S])$.  
 Let $H_{K[S]}(n) = \dim_K K[S]_n$  be the Hilbert function of $K[S]$. Then $H_{K[S]} (n)$ agrees with $P_{K[S]}(n)$ for all $n\gg 0$, where $P_{K[S]}(t)$ is the so-called  Hilbert polynomial. The {\it regularity index} (of Hilbert function $H_{K[S]} (n)$) is the number
 $$\ri (S) := \min\{ n_0 \geq 0\mid H_{K[S]} (n) = P_{K[S]} (n) \ \text{for all}\ n\geq n_0 \}.$$
 Since $K[S]$ has positive depth, from the Grothendieck--Serre  formula
$$ P_{K[S]}(t) - H_{K[S]}(t) =  \ell(H^1_{\mfr}(K[S])_t -  \ell(H^2_{\mfr}(K[S])_t $$
we always have $\ri(S) \leq \reg(K[S])$ with equality when $a_1(K[S]) >  a_2(K[S])$. With the remark that $F(A) + 1$ is the conductor of $\langle A\rangle$, we have 
 
 \begin{Lemma}\label{GSTh31}
 \begin{enumerate}[\((i)\)]
     \item {\rm \cite[Theorem 3.1]{GS}} We have
$$\sigma (A) = \max\left\{ \ri (K[S]),\ \left\lceil \frac{F_1 + F_2 + 2}{d} \right\rceil \right\}.$$
\item {\rm \cite[Theorem 3.16]{GS}} $\reg(K[S]) \leq \sigma(A)+1$. More precisely,
\begin{enumerate}[\((a)\)]
    \item If $\ri(S) \geq \left\lceil \frac{F_1 + F_2 + 2}{d} \right\rceil$, then $\sigma(A) \leq \reg(K[S]) \leq \sigma(A)+1$.
    \item If $\ri(S) < \left\lceil \frac{F_1 + F_2 + 2}{d} \right\rceil$, 
 then $\left\lceil \frac{\sigma(A)  }{2} \right\rceil + 1 \leq \reg(K[S]) \leq \sigma(A)+1$.
\end{enumerate}
 \end{enumerate}
\end{Lemma}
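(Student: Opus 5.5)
Since the statement is quoted from \cite{GS}, the natural route is to re-derive it from the local cohomology description in Lemma \ref{A1} together with the Grothendieck--Serre formula. The starting point is the identification $K[S]_h \cong \bigoplus_{x\in hA} K\cdot(x,hd-x)$, so that $H_{K[S]}(h)=\sharp(hA)$.

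I would introduce the ``expected'' set
$$T_h := \{x\in[0,hd]\mid x\in S_{(1)},\ hd-x\in S_{(2)}\} \supseteq hA.$$
Since $(x,hd-x)\in H$, we have $S'=(S_{(1)}\times S_{(2)})\cap H$, so $T_h$ is exactly the degree-$h$ part of $S'$. Lemma \ref{A1}\((ii)\) then gives $\sharp T_h-\sharp(hA)=\ell(H^1_\mfr(K[S])_h)$. Lemma \ref{A1}\((iii)\) shows that $H^2_\mfr(K[S])_h$ counts the $x$ with $x\notin S_{(1)}$ and $hd-x\notin S_{(2)}$. Because every gap of $S_{(1)}$ is at most $F_1$ and every gap of $S_{(2)}$ is at most $F_2$, this set is empty exactly when $hd\ge F_1+F_2+2$, that is, when $h\ge \lceil (F_1+F_2+2)/d\rceil$. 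In that range the gap sets $[0,F_1]\setminus S_{(1)}$ and $hd-([0,F_2]\setminus S_{(2)})$ are disjoint, so $\sharp T_h = hd+1-g_1-g_2$, where $g_i$ is the number of gaps of $S_{(i)}$. This is the Hilbert polynomial $P_{K[S]}(h)$.

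For \((i)\), I would show that the decomposition of Theorem \ref{NaTh11} holds at $h$ if and only if $hA=T_h$ and $hd\ge F_1+F_2+2$, with $C_i=S_{(i)}\cap[0,F_i]$ and $c_1=F_1+1$, $c_2=F_2+1$. The ``if'' direction is immediate from the shape of $T_h$. For the converse, I would compare with large $h$: there the sets $C_i$ are forced to be $S_{(i)}\cap[0,F_i]$. I would then argue that a decomposition at $h$ with these fixed $C_i$ needs the middle interval nonempty and disjoint from the end pieces, which requires $hd\ge F_1+F_2+2$, and at the same time forces $hA=T_h$. Combining this with the counting above, for $h\ge\lceil (F_1+F_2+2)/d\rceil$ the condition $H(h)=P(h)$ is equivalent to $H^1_\mfr(K[S])_h=0$, which is equivalent to $hA=T_h$. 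Taking the least $h$ from which everything holds gives $\sigma(A)=\max\{\ri(S),\lceil (F_1+F_2+2)/d\rceil\}$. The main obstacle I expect is this converse: checking carefully that the constants $c_i$ and sets $C_i$ in Theorem \ref{NaTh11} are forced, so that no smaller $h$ can satisfy the decomposition through some accidental other choice.

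For \((ii)\), I would use $\reg(K[S])=\max\{a_1+1,a_2+2\}$.
\begin{enumerate}[\((1)\)]
    \item By the proof of \((i)\), $H^1_\mfr(K[S])_h=0$ for all $h\ge\sigma(A)$, so $a_1+1\le\sigma(A)$.
    \item By Lemma \ref{B2}, $a_2+2\le \lfloor (F_1+F_2)/d\rfloor+2\le\lceil (F_1+F_2+2)/d\rceil+1\le \sigma(A)+1$.
    \item Case \((a)\): here $\sigma(A)=\ri(S)\le\reg(K[S])$, using $\ri(S)\le\reg(K[S])$ as noted before the lemma.
    \item Case \((b)\): here $\sigma(A)=\lceil(F_1+F_2+2)/d\rceil$, and I would bound $a_2$ from below with Lemma \ref{B1}. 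Choose $i$ with $\omega_1(i)=F_1+d$, or $j$ with $\omega_2(d-j)=F_2+d$, whichever Frobenius number is larger. Then $\deg(\wbf_i)\ge (\max\{F_1,F_2\}+d+1)/d$, and $\max\{F_1,F_2\}\ge (F_1+F_2)/2$. This gives $a_2+2\ge\lceil\sigma(A)/2\rceil+1$ after a routine estimate.
\end{enumerate}
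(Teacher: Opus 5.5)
The paper does not prove this lemma; both parts are quoted from \cite{GS}. Your argument is therefore a genuinely different route: a self-contained derivation, and it is correct. It rebuilds the result from the paper's own Ap\'ery-set machinery:
\begin{itemize}
\item Lemma~\ref{A1} identifies $T_h$ with the degree-$h$ part of $S'$, and $\sharp T_h-\sharp(hA)$ with $\ell(H^1_\mfr(K[S])_h)$;
\item the vanishing of $H^2_\mfr(K[S])_h$ in high degree gives $P_{K[S]}(h)=\sharp T_h=hd+1-g_1-g_2$;
\item Lemmas~\ref{B1} and \ref{B2} handle $a_2(K[S])$.
\end{itemize}
Your lower bound in \((b)\) is the same device the paper uses later in the proof of Corollary~\ref{GSCor}. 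The citation buys brevity. Your route makes the lemma internal to the paper. It even yields the unconditional bound $\reg(K[S])\ge\lceil(\max\{F_1,F_2\}+1)/d\rceil+1$, which implies \((b)\). For your ``routine estimate'', note that $\lceil\sigma(A)/2\rceil=\lceil(F_1+F_2+2)/(2d)\rceil\le\lceil(\max\{F_1,F_2\}+1)/d\rceil$, using $\lceil\lceil x\rceil/2\rceil=\lceil x/2\rceil$.

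One claim needs correcting, though it is harmless. The $H^2$-set is not empty \emph{exactly} when $hd\ge F_1+F_2+2$. If $x\notin S_{(1)}$ and $hd-x\notin S_{(2)}$, then $hd-F_2\le x\le F_1$, so the set is already empty for $hd\ge F_1+F_2+1$. By (\ref{Eloco2}) it is empty precisely for $h>a_2(K[S])$. You only use the sufficiency direction, so nothing breaks. However, the $+2$ in the formula does not come from $H^2$. It comes solely from requiring the middle interval $[c_1,hd-c_2]$ to be nonempty; the paper writes $[p,q]$ only for $p\le q$.

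The obstacle you anticipate is short:
\begin{itemize}
\item For $h\gg0$ one has $hA=T_h$, and $T_h\cap[0,hd/2]=S_{(1)}\cap[0,hd/2]$.
\item For such $h$, $c_1-1\notin C_1\sqcup[c_1,hd-c_2]\sqcup(hd-C_2)$, so $c_1-1\notin S_{(1)}$.
\item Also $[c_1,hd-c_2]\subseteq S_{(1)}$ for arbitrarily large $h$.
\end{itemize}
This forces $c_1=F_1+1$ and $C_1=S_{(1)}\cap[0,F_1]$, and symmetrically $c_2=F_2+1$, $C_2=S_{(2)}\cap[0,F_2]$. With these constants, for every $h$ with $hd\ge F_1+F_2+2$ the right-hand side of the decomposition is literally $T_h$. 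So the decomposition at $h$ is equivalent to $hA=T_h$, exactly as you claimed.
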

 
 In some particular cases, there are  more precise relationships between the invariants involved in the above theorem, see Propositions 3.2 and 3.18, and Corollary 3.21 in \cite{GS}. Below is another case.
 
 \begin{Corollary} \label{GSCor}  Assume that $a_1 = 1$. Then $\sigma(A) \leq \reg(K[S]) \leq \sigma(A)+1$.
\end{Corollary}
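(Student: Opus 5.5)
The upper bound $\reg(K[S]) \le \sigma(A)+1$ is already contained in Lemma \ref{GSTh31}\((ii)\). So only the lower bound $\sigma(A) \le \reg(K[S])$ needs proof. By Lemma \ref{GSTh31}\((ii)(a)\) it is enough to show that when $a_1 = 1$ we always have
$$\ri(S) \geq \left\lceil \frac{F_1+F_2+2}{d} \right\rceil,$$
or, failing that, to establish $\sigma(A) \le \reg(K[S])$ directly from part \((i)\).

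Plan: since $\sigma(A) = \max\{\ri(S), \lceil (F_1+F_2+2)/d\rceil\}$ and $\ri(S) \le \reg(K[S])$ always, the lower bound reduces to
$$\left\lceil \frac{F_1+F_2+2}{d} \right\rceil \le \reg(K[S]).$$
Because $1 \in A_1$, we have $F_1 = -1$ and $\omega_1(j) = j$ for all $j \le d-1$. So the target becomes $\lceil (F_2+1)/d \rceil \le \reg(K[S])$. If $F_2 = -1$, the left side is $0$ and there is nothing to prove, so assume $F_2 \ge 1$.

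Write $F_2 \equiv d-j \pmod d$ with $1 \le j \le d-1$; this $j$ is nonzero since $F_2 \notin S_{(2)}$ while $d \in S_{(2)}$. Then $\omega_2(d-j) = F_2 + d$, because $F_2+d \in S_{(2)}$ and $F_2 \notin S_{(2)}$. Hence
$$\wbf_j = (j,\ F_2+d), \qquad \deg(\wbf_j) = \frac{j+F_2+d}{d}.$$
By Lemma \ref{B1},
$$a_2(K[S]) + 2 \ge \deg(\wbf_j) = \frac{F_2+1}{d} + 1 + \frac{j-1}{d} \ge \frac{F_2+1}{d} + 1.$$
Since $\deg(\wbf_j)$ is an integer, this gives $\reg(K[S]) \ge a_2(K[S]) + 2 \ge \lceil (F_2+1)/d \rceil$, in fact with one unit to spare. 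Combining with $\ri(S) \le \reg(K[S])$ yields $\sigma(A) \le \reg(K[S])$.

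The main point to check carefully is the identification $\omega_2(d-j) = F_2+d$, i.e. that the Apéry element in the residue class of $F_2$ is exactly $F_2 + d$. This holds since everything above the Frobenius number lies in $S_{(2)}$. The only other possible subtlety is the degenerate case $F_2 = -1$ (that is, $d-a_{k-1} = 1$), which is handled trivially above. No use of $H^1$ is needed.
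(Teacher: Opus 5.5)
Your proposal is correct and is essentially the paper's proof. You use $F_1=-1$, choose $j$ with $\omega_2(d-j)=F_2+d$, apply Lemma~\ref{B1} to get $\reg(K[S])\ge a_2(K[S])+2\ge \deg(\wbf_j)\ge \lceil (F_2+1)/d\rceil+1$, and conclude from Lemma~\ref{GSTh31}(i) together with $\ri(S)\le\reg(K[S])$; the opening detour through Lemma~\ref{GSTh31}(ii)(a) and the separate case $F_2=-1$ are unnecessary (the general argument already covers $F_2=-1$ with $j=1$) but harmless.
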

 
 \begin{proof} The upper bound is Lemma \ref{GSTh31}\((ii)\). For the lower bound, note that $F_1 = -1$, because   $a_1= 1$. Choose an index $0< i\leq d-1$ such that $\omega_2(d-i) = F_2 + d$. By Lemma \ref{B1}, 
 $$\reg(K[S]) \geq a_2(K[S]) + 2 \geq \deg(\wbf_i) = \frac{F_2+d + \omega_1(i)}{d} \geq \frac {F_2+1}{d} + 1,$$
 whence
 $\reg(K[S]) \geq \left\lceil \frac{F_2+1}{d} \right\rceil +1 > 
 \frac{F_1+F_2 +2}{d}$. Since $\ri(S) \leq \reg(K[S])$, by Lemma \ref{GSTh31}\((i)\), $\reg(K[S]) \geq \sigma(A)$, as required.
\end{proof}
Examples in \cite[Table 1]{GS} show that both cases in the above corollary can happen. Namely, $\reg(K[S]) = \sigma(A)$ for the curve $C(1,3,11,13)$, and $\reg(K[S]) = \sigma(A) +1$ for the curve $C(1,3,5,6,12)$.

 The best bound on $\sigma(A)$ until now  is given in \cite[Theorem 1]{GW} as follows:
 $$\sigma (A) \leq  d - k +1 = \deg(C) - \codim (C).$$
 This upper bound is exactly the well-known Gruson--Lazarsfeld--Peskine bound for curves. In \cite[Section 5]{GS}, Gimenez and Gonz\'alez-S\'anchez showed that one can deduce the Granville--Walker bound from the one of Gruson--Lazarsfeld--Peskine, and vice versa. In particular,  \cite[Theorem 3.1]{GS} provides another combinatorial proof of the Gruson--Lazarsfeld--Peskine bound. Note that the first combinatorial proof of the Gruson--Lazarsfeld--Peskine bound was given in \cite{Nit}.
 
 Below we show that $\sigma(A)$ is also bounded by L'vovsky's bound.
 
 \begin{Proposition} \label{SumsetLv} L'vovsky's bound holds for $\sigma(A)$, that is
 $$\sigma(A) \leq \lambda_{\max} + \lambda_{\mathrm{sl}} + 1.$$
\end{Proposition}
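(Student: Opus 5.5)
By Lemma \ref{GSTh31}\((i)\), $\sigma(A) = \max\{\ri(S), \lceil (F_1+F_2+2)/d\rceil\}$. It therefore suffices to bound each of the two terms by $\lambda_{\max}+\lambda_{\mathrm{sl}}+1$.

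For the first term, recall that $\ri(S)\le \reg(K[S])$. This follows from the Grothendieck--Serre formula, since $K[S]$ has positive depth. L'vovsky's theorem \cite{Lv} then gives $\reg(K[S])\le \lambda_{\max}+\lambda_{\mathrm{sl}}+1$. So $\ri(S)$ causes no trouble, and the whole burden lies on the Frobenius term.

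For the Frobenius term, Lemma \ref{B1} gives
$$a_2(K[S])+2=\max_{0\le i\le d-1}\frac{\omega_1(i)+\omega_2(d-i)}{d}.$$
I would choose $i$ with $\omega_1(i)=F_1+d$, which is possible by the definition of the Frobenius number via the Apéry set. This gives
$$a_2(K[S])+2\ \ge\ \frac{F_1+d+\omega_2(d-i)}{d}.$$
Since $\omega_2(d-i)\ge 1$, this yields only $\lceil (F_1+F_2+2)/d\rceil$-type bounds for one of the two Frobenius numbers, not their sum. So combining $\reg(K[S])\ge a_2(K[S])+2$ directly with L'vovsky's bound is not quite enough.

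I would instead bound $F_1+F_2$ directly in terms of the gaps. Let the two largest gaps be $\lambda_i=\lambda_{\max}$ and $\lambda_j=\lambda_{\mathrm{sl}}$. The plan is to show $F_1\le$ roughly $d\cdot(\text{something})$ using only the structure near the large gaps. Concretely, I would use Lemma \ref{FrobSum}-type arguments: every integer $x$ outside the two largest gaps can be written using the generators adjacent to small gaps, and an element $x+nd$ is reached by repeatedly subtracting generators $a_l$ with $a_l-a_{l-1}\le\lambda_{\mathrm{sl}}+1$.

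The cleaner alternative, which I would try first, is to avoid bounding $F_1+F_2$ separately. Set $h=\lambda_{\max}+\lambda_{\mathrm{sl}}+1$ and check directly that $hA$ contains the middle interval. L'vovsky's bound on $\reg(K[S])$ implies that the Hilbert function agrees with $hd+1-(\#\text{holes})$ for $h\ge\reg$. I would then use the known fact from \cite{GS} that $\lceil (F_1+F_2+2)/d\rceil\le \reg(K[S])$ whenever $\ri(S)$ is smaller. The case split is:
\begin{enumerate}[\((a)\)]
\item if $\ri(S)\ge\lceil (F_1+F_2+2)/d\rceil$, then $\sigma(A)=\ri(S)\le\reg(K[S])$ and we are done;
\item otherwise, $\sigma(A)=\lceil (F_1+F_2+2)/d\rceil$, and I must show this is at most $\lambda_{\max}+\lambda_{\mathrm{sl}}+1$.
\end{enumerate}

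The main obstacle is case (b): bounding $\lceil (F_1+F_2+2)/d\rceil$ by the gap sum. For this I would use the elements $\wbf_i$ realizing $F_1$ and $F_2$, together with the degree estimates from the proof of Corollary \ref{A5}\((i)\). These give
$$\omega_1(i)\le d\cdot\bigl(1+\lceil\lambda_{\max}/1\rceil\bigr)\text{-type bounds},$$
and I would aim to show $F_1/d+F_2/d<\lambda_{\max}+\lambda_{\mathrm{sl}}$ by pairing, via Lemma \ref{B1}, with $a_2(K[S])+2\le\reg(K[S])$.
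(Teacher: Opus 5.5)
\textbf{There is a genuine gap.} Your bound on the first term is correct and is what the paper does: $\ri(S)\le\reg(K[S])\le\lambda_{\max}+\lambda_{\mathrm{sl}}+1$ by L'vovsky. But you never bound the Frobenius term $\lceil (F_1+F_2+2)/d\rceil$, and none of the routes you sketch would bound it.
\begin{enumerate}[\((a)\)]
\item The ``known fact'' that $\lceil (F_1+F_2+2)/d\rceil\le\reg(K[S])$ whenever $\ri(S)$ is smaller is not available. Lemma \ref{GSTh31}(ii)(b) only gives $\lceil\sigma(A)/2\rceil+1\le\reg(K[S])$, which yields only $\sigma(A)\le 2(\lambda_{\max}+\lambda_{\mathrm{sl}})$.
\item Pairing with $a_2(K[S])+2\le\reg(K[S])$ via Lemma \ref{B1} has the defect you noticed yourself. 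It controls only $\max\{F_1,F_2\}$, which bounds $F_1+F_2$ only up to a factor $2$.
\item The degree estimates from the proof of Corollary \ref{A5}(i) need $a_1=1$ and $a_{k-1}=d-1$. In that case $F_1=F_2=-1$ and there is nothing to prove.
\item Any estimate of the shape $F_i\lesssim d\lambda_{\max}$ for both $i$ would give $2\lambda_{\max}$, not $\lambda_{\max}+\lambda_{\mathrm{sl}}$.
\end{enumerate}

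\textbf{The missing idea.} $F_1$ is controlled by the first gap alone and $F_2$ by the last gap alone. Apply Schur's bound (\ref{EFRb1}) to $A_1\setminus\{0\}$ (smallest element $a_1$, largest $d$) and to $A_2\setminus\{0\}$ (smallest element $d-a_{k-1}$, largest $d$). This gives $F_1\le (a_1-1)(d-1)-1=\lambda_1(d-1)-1$ and $F_2\le (d-a_{k-1}-1)(d-1)-1=\lambda_k(d-1)-1$. Hence
$$\left\lceil\frac{F_1+F_2+2}{d}\right\rceil\le\left\lceil\frac{(\lambda_1+\lambda_k)(d-1)}{d}\right\rceil\le\lambda_1+\lambda_k\le\lambda_{\max}+\lambda_{\mathrm{sl}}.$$
The last inequality holds because $\lambda_1$ and $\lambda_k$ are two distinct gaps; this is essentially Lemma \ref{FrobSum}(i). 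Combining this with your bound on $\ri(S)$ and Lemma \ref{GSTh31}(i) finishes the proof, with no case split needed.
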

 
 \begin{proof} By Schur's bound (\ref{EFRb1}), we have
 \begin{align*}
     \left\lceil \frac{F_1+F_2 + 2}{d} \right\rceil&\leq \left\lceil \frac{(a_1-1) (d-1) + (d-a_{k-1}-1)(d-1)  }{d} \right\rceil \\
 &\leq  (a_1 -1) + (d-a_{k-1} -1) \leq \lambda_{\max} + \lambda_{\mathrm{sl}}.
 \end{align*}
On the other hand, by L'vovsky's bound, $\ri(S) \leq \reg(K[S]) \leq \lambda_{\max} + \lambda_{\mathrm{sl}} + 1$. Hence the statement immediately follows from Lemma \ref{GSTh31}\((i)\).
\end{proof}
Note that using the results in Section \ref{Breg}, under some specific conditions,  the proof of the above proposition also gives several better bounds on $\sigma(A)$ than L'vovsky's bound. An open question is if one can find a combinatorial proof  for the above proposition (i.e. without using L'vovsky's result).

\vskip0.5cm
\noindent {\bf Acknowledgement}.
 The first author is partially supported by the Research Project NAFOSTED 101.04-2024.07.

\section*{APPENDIX}

In this section we construct several algorithms.

In the first algorithm, given a subset $\Acal$ we compute the Ap\'ery set $Ap(\Acal)$ as a vector in $ \Nset^{d}$ and degrees of its elements as a vector $\Deg(\Acal) \in \Nset^{d}$. Given an integer $a$ and a positive integer $d$, we denote by $\rem(a;d)$ the reminder of $a$ on division by $d$.
\vskip0.5cm

\noindent {\bf Algorithm 1}: 
Computation of  Ap\'ery set $Ap(\Acal)$ and its degree vector $D(\Acal)$\\

\begin{tabular}{ll}
Input: &  $ \Acal := \{\alpha_1,\ldots,\alpha_k =: d\}$    \\
Output: & $ Ap(\Acal)  = (\omega(0), \omega(1),\ldots, \omega(d-1) ) $ \\
& $ D(\Acal) = (\delta (\omega(0)), \delta(\omega(1)),\ldots, \delta(\omega(d-1) ))$ \\
Initial: &  $n:= 1; \ \Acal' :=  \Acal\setminus \{d\};\  \Omega :=  \Acal'; \ Ap(0):= 0;\ D(0) := 0$;  \\
& WHILE $1\leq i\leq d-1$ DO \\
& \hskip0.8cm IF $ i\in \Acal$ THEN $Ap(i) := i;  D(i):= 1$ ELSE $ Ap(i) := i +d\alpha_1;\ D(i) := d. $\\
 REPEAT &  \\
&  \hskip1cm $n:= n+1; \ \Omega1 := \emptyset; \ C:= \Acal'\times \Omega$ \\
&  \hskip1cm FOR each $(\alpha, y) \in C$ \\
& \hskip2cm $i:= \rem(\alpha + y; d)$\\
& \hskip2cm IF $\alpha+y < Ap (i)$ THEN\\
& \hskip3cm $Ap(i) := \alpha + y;\ D(i) := n;\ \Omega1 := \Omega1 \cup \{\alpha + y\}$ \\
&  \hskip1cm $\Omega:= \Omega1$\\
 UNTIL & $\Omega = \emptyset$
\end{tabular} 
\vskip0.5cm

In the second algorithm, given the set $A_1$ we compute the set $\Iset$ in Lemma \ref{A3} and $a_2(K[S])$.  We also compute the set 
$\Wbf(A_1) =\{\wbf_0,\wbf_1,\ldots,\wbf_{d-1} \}$,  the degree vectors  $\Deg(A_1), \ D1(A_1), \ D2(A_1) \in \Nset^{d}$ of elements of $\Wbf(A_1)$ and their components.
\vskip0.5cm

\noindent {\bf Algorithm 2}:  Computation of $\Iset$ and $a_2(K[S])$

\begin{tabular}{l}
Input:   $ A1:= \{a_1,\ldots,a_k =: d\}$    \\
Output:  $\Iset (A1) := \Iset$; $ a(A1):= a2 = a_2(K[S])$;  
$\Wbf (A1) := \Wbf =  (\wbf_1,\ldots,\wbf_{d-1})$, \\
\hskip1.6cm  $\Deg(A1) :=  \Deg = (\deg(\wbf_1),\ldots, \deg(\wbf_{d-1}))$,\\
\hskip1.6cm $ D1(A1) := D1  = (\delta_1(\omega_1(1)), \delta_1(\omega_1(2)),\ldots,\delta_1(\omega_1(d-1)) )$ \\
 \hskip1.6cm $ D2(A1) := D2 = (\delta_2(\omega_2(d-1)), \delta_2(\omega_2(d-2)),\ldots,\delta_2(\omega_2(1))$\\
 Initial: $\Iset := \emptyset$ \\
 $A2:= \{d-a_{k-1},\ldots,d-a_1, d\}$ \\
 \text{\it \%  Computation by Algorithm} 1\\
 $Ap1 := Ap(A1);\  Ap2 := Ap(A2); D1 := D(A1); D:= D(A2) $\\
 WHILE $1\leq i\leq d-1$ DO \\
 \hskip1cm $\wbf_i := ( Ap1(i), Ap2(d-i) );\ \Deg(i) := ( Ap1(i)+ Ap2(d-i))/d;\ D2(i) := D(d-i)$\\
   \hskip1cm  IF $D1(i) > \Deg(i)$ THEN $\Iset := \Iset \cup \{ i\}$\\
   $a2 := \max\{\Deg(1),\ldots,\Deg(d-1)\} - 2$.  
\end{tabular} 
\vskip0.5cm

In the last algorithm, we check if $K[S]$ is a Cohen--Macaulay or Buchsbaum ring, respectively, and compute the Castelnuovo--Mumford regularity as well as the length of the first local cohomology module. 
\vskip0.5cm

\noindent {\bf Algorithm 3}: Computation of  $\reg(K[S])$ and $\ell(H^1_{\mfr}(K[S]))$

\begin{tabular}{l}
Input: $A := \{a_1,\ldots,a_k =:d\}$\\
Output: $a_1(K[S]) = a1$, $ \reg (K[S]) = reg$, $ \sharp(\Lcal) = L$; \ $\ell(H^1_{\mfr}(K[S])) = \ell$ \\
\hskip2cm $ CM$ ({\it = 1 if $K[S]$ is Cohen--Macaulay; 0: otherwise})\\ 
\hskip2cm $Bbm$ ({\it = 1 if $K[S]$ is Buchsbaum; 0: otherwise})\\
$Ap1:= Ap(A)$ (\text{\it  \% By Algorithm 1} )\\
\text{\it \% Computation by Algorithm 2} \\
$\Wbf := \Wbf(A); \Deg := \Deg(A);\ D1:= D1(A);\ D2 := D2(A)$\\
$ \Iset := \Iset(A)$; $a2 := a2(A)$; $reg:= a2 + 2$\\
IF $\Iset = \emptyset$ THEN  $CM:= 1$  and  Goto STOP\\
ELSE \\

\hskip1cm $CM := 0$; $L := \sharp(\Iset)$; 
$ND2:= \max\{ D2(i) - \Deg(i) -1\mid i\in \Iset\}$\\
\hskip1cm IF $ND2=0$ THEN \\
\hskip2cm  $ND1 := \max\{ D1(i) - \Deg(i) -1\mid
i\in \Iset\}$\\
\hskip2cm IF $ND1 =0$ THEN \\
\hskip3cm $a1 := \max\{ \Deg(i)\mid i\in \Iset \}$;  $reg :=\max\{ a1+1; \ reg \}$; $\ell := L$ \\
\hskip3cm FOR each $i, j\in \Iset$ and $\wbf(i) > \wbf(j) $\\
\hskip4cm IF $\wbf(i) - \wbf(j) \in \{\ebf_h\mid h \in A\}$ THEN\\ \hskip5cm $Bbm:= 0$ and Goto  STOP \\
\hskip3cm  $Bbm:= 1$ and Goto STOP\\
\hskip2cm ELSE\\
\hskip3cm  $Bbm:= 0$; $a1:= \max\{D1(i)\mid i\in \Iset \} -1$\\
\hskip3cm  $\reg := \max\{a1+1,\ reg\}$; $\ell:= 0$\\
\hskip3cm FOR each  $i\in \Iset$ DO
  $\ell := \ell + D1(i) - \Deg(i)$\\
\hskip3cm Goto STOP\\
\hskip1cm  ELSE\\
\text{\it \% Computation of degree of $\langle A_1\rangle$ upto degree $N1+ND2$}\\
\hskip2cm  $Bbm:= 0$; $n:= 1; \ An:= A; N1:= \max\{D1(i) \mid i\in \Iset\}$   \\
\hskip2cm  $TotD (i,j):= \begin{cases} 0 & \text{if}\  i= 0, \  \text{and}\ j=0\\
1 & \text{if}\ i\in A \  \text{and}\ j=0,\\
2d^2 & \text{if} \ 1\leq j \leq N1 + ND2.
\end{cases}$\\
\hskip2cm REPEAT   \\
 \hskip3cm $n:= n+1;  \ B:= \emptyset;  \ C:= An \times A$ \\
\hskip3cm FOR each $(\alpha, y) \in C$ \\
 \hskip4cm $i:= \rem(\alpha + y; d);\ j := (\alpha+y - i)/d$\\
 \hskip4cm IF $n < TotD(i;j)$ THEN\\
 \hskip5cm $TotD(i;j) := n;\ B := B \cup \{\alpha + y\}$ \\
  \hskip3cm $An:= B$\\
 \hskip2cm UNTIL $n> N1+ND2$ \\
 $a1:= 0$; \ $\ell:= 0$\\
 WHILE $i\in \Iset$\\ 
 \hskip1cm $m:= 0; ND2i := D2(i) - \Deg(i) -1$; $L := L + ND2i$\\
 \hskip1cm WHILE $m \leq ND2i$ DO\\
 \hskip2cm $j:= ( Ap1(i)-i)/d ); \  n2:= TotD(i;j) - \Deg(i) -1$\\
 \hskip2cm $\ell:= \ell + n2 + 1$; $a1:= \max\{a1, \Deg(i) + m + n2\}$\\
 $reg:= \max\{a_1+1;\ reg\}$
\end{tabular}
\vskip0.5cm

\begin{proof} 
\begin{enumerate}[\((a)\)]
    \item \textit{Correctness of Algorithm 1.} By Schur's bound (\ref{EFRb1}), for any $i$, $\omega(i) <  d\alpha_1$ and $\delta(\omega(i)) < d$. Hence,  when $i\not\in \Acal$, setting $Ap(i) := i +d\alpha_1;\ D(i) := d$ is equivalent to saying that $\omega(i)$ and its degree  are still not defined.

Denote by $Ap_n,\ \Omega_n, \ D_n$ the resulted sets $Ap,\ \Omega$ and vector $D$ in Step $n$, respectively. Recall that $\Acal' = \Acal \setminus \{d\}$. Then by induction, it is clear that $\Omega_n \subset n\Acal'$, whence $\delta(x) \leq n$ for any $x\in \Omega_n$.

We show by induction on $ m = \delta(\omega(i))$ that $\omega(i) \in \Omega_m$, $D_m(i) = \delta(\omega(i))$  and  $Ap_n \neq \emptyset$ for all $n\leq m$. By the initial step, this holds for $m=1$. Assume that $\delta(\omega(j)) = m >1$ and the claim holds true for all $\omega(i)$ with $\delta(\omega(i)) \leq m-1$. By the definition of degree we can find $\beta_1,\ldots,\beta_m \in \Acal'$ such that
$\omega(j) = \beta'  + \beta_m$, where $\beta' = \beta_1 + \cdots + \beta_{m-1}$. By Remark \ref{degree}, we must have $\delta(\beta') = m-1$. Assume that $\beta' = \omega(h) + dq$ for some $h\leq d-1$ and $q\in \Nset$. Then $\omega(h) + \beta_m$ and $\omega(j)$ belong to the same residue class $j$ modulo $d$. From the minimality of $\omega(j)$ we must have $q=0$, and $\delta(\omega(h)) = \delta(\beta') = m-1$. By induction hypothesis, $\beta' = \omega(h) \in \Omega_{m-1}$, whence  $\Omega_n \neq \emptyset$ for all $n\leq m-1$.

Since $\delta(\omega(j)) = m$, $\omega(j)$ cannot appear in the previous steps. Since $\omega(j) = \beta'+\beta_m$ is the smallest number in the residue class $j$, in this step we must have $\beta'+\beta_m < Ap_{m-1}(j)$. Hence,  $Ap_m(j)$ now takes the value $\omega(j)$, $D_m(j) = \delta(\omega(j))$  and $\omega(j) \in \Omega1 \subset \Omega_m$.
\item \textit{Termination of Algorithm 1.} Once all $\omega(1),\ldots,\omega(d-1)$ have been appeared before Step $n$,  no pair $(\alpha, y) \in \Acal' \times Ap_n$ satisfies $\alpha + y < \omega(i)$. This means $\Omega1 = \emptyset$, whence $\Omega_n = \emptyset$, and the algorithm terminates. By the above argument, it terminates exactly at the step $N := \max\{ \delta(\omega(1)),\ldots,\delta(\omega(d-1)\} + 1$.

When the algorithm terminates (at Step $N$), we see that for any $i\leq d-1$ we have  $Ap(i) = \omega(i)$ and $D(i) = \delta(\omega(i))$. That means the output gives us the correct answer.
\item \textit{Correctness of Algorithm 2.} follows from \((a)\) above,  the definition of  the set $\Iset$ in Lemma \ref{A3} and Lemma \ref{B1}.
\item \textit{Correctness of Algorithm 3.} Note that $A = A_1\setminus \{0\}$. We use Theorem \ref{A4a} and Theorem \ref{A3New}. All variables there can be computed, using Algorithm 2, except $\delta_1(\omega_1(i) + n_1d)$ in the formula there. It is enough to show that the algorithm computes this value if needed.

Since $\delta_1(\omega_1(i) + n_1d) \leq \delta_1(\omega_1(i)) + n_1 \leq N1+ND2 =: M$ (by Remark \ref{degree}), where $N1,\ ND2$ are defined in Algorithm 3, it is enough  to compute degrees of all elements of $\cup_{n=1}^{M} (nA)$. 
Let $A(n):= nA \setminus \cup_{i=1}^{n-1} (iA)$. Then for any $0< x\in \langle A\rangle$, $\delta_1(x) =n$ if and only if $x\in A(n)$. If $x = \beta_1 + \cdots + \beta_n$, where $\beta_i \in A$, let $y:= \beta_1 + \cdots + \beta_{n-1}$. Then by Remark \ref{degree}, we must have $\delta_1(y) = n-1$, whence $A(n) \subseteq A(n-1) + A$. 

On the other hand $TotD(i,j)$ is the degree $\delta_1(i + jd)$ if $i+jd\in \langle A \rangle$ and $\infty$ otherwise. As showed in \((a)\), $M= N1 + ND2 < 2d$, all elements of $\cup_{n=1}^{M} (nA)$ are less than $2d^2$. This means the assignment $TotD(i,j) = 2d^2$ in the initial step is equivalent to saying that either $i+jd \not\in \langle A \rangle$ or its degree is still not defined. Note that before a possible reassignment at the step $n$, either $TotD(i,j) = 2d^2$ or $TotD(i,j) < n$

We denote the resulted set $An$ at the step $i$ by $A_i$.  By induction we now show that $A_n = A(n)$ and $TotD(i,j) = n$ for all $i+jd \in A_n$. This is clear for $n=1$. In the step $(n+1)$ we have $C= A_n \times A$. Fix a pair  $(\alpha, y)\in C$, let  $i+jd = \alpha + y \in A(m)$, where $0\leq i\leq d-1$, and $m= \delta_1(\alpha + y)$. Then $m\leq n +1$. If $m< n+1$,  by induction we already assign $TotD(i,j) = m < n+1$, so $TotD(i,j)$ remains unchanged in this case. Moreover $ \alpha + y \not\in B$

If $m= n+1$, then by remark above, $TotD(i,j) = 2d^2 >m$. Therefore, by the algorithm, we now assign $TotD(i,j) := m $ and add $\alpha+y$ to $B$. So, at the end of Step $(n+1)$, we have $B= A(n+1)$, whence $A_{n+1}= B = A(n+1)$, as required.
\end{enumerate}
\end{proof}

\begin{Remark}
    The above algorithms can be run using the link \url{https://sites.google.com/view/quangtiendoan/home} on the website of the second author.
\end{Remark}

\

{
\noindent Le Tuan Hoa

\noindent Address: \emph{Institute of Mathematics, VAST, 18 Hoang Quoc Viet, 10307 Hanoi, Vietnam}

\noindent  Email: \emph{lthoa@math.ac.vn}

\

\noindent Doan Quang Tien

\noindent Address: \emph{Institute of Mathematics, VAST, 18 Hoang Quoc Viet, 10307 Hanoi, Vietnam}

\noindent Email: \emph{doanquangtien1442001@gmail.com}

\end{document}